\numberwithin{equation}{section}
\newtheorem{theorem}{Theorem}[section]
\newtheorem{corollary}[theorem]{Corollary}
\newtheorem{definition}[theorem]{Definition}
\newtheorem{lemma}[theorem]{Lemma}
\newtheorem{proposition}[theorem]{Proposition}
\theoremstyle{remark}
\newtheorem{remark}[theorem]{Remark}
\newtheorem{example}[theorem]{Example}
\newcommand{\F}{\mathcal F}
\newcommand{\W}{\mathcal W}
\newcommand{\N}{\mathbb N}
\newcommand{\R}{\mathbb R}
\newcommand{\B}{\mathcal B}
\newcommand{\Pc}{\mathcal P}
\newcommand{\supp}{\text{supp}}
\newcommand{\law}{\text{Law}}
\def\P{{\mathbb P}}
\def\E{{\mathbb E}}
\newcommand{\tm}[1]{\| #1 \|}
\def\Q{{\mathbb Q}}
\newcommand{\suc}{\prec_{\text{sc}}}
\newcommand{\bbc}{\prec_{\text{$\beta$}}}
\newcommand{\sbbc}{\prec_{\text{s$\beta$}}}
\newcommand{\cplbbc}{{\text{Cpl}}_{\text{$\beta$}}}
\newcommand{\cplsbbc}{{\text{Cpl}}_{\text{s$\beta$}}}
\newcommand{\cpls}{{\text{Cpl}}}
\begin{document}

\title{Strassen's theorem for biased convex order}

\author{
Beatrice Acciaio\thanks{Department of Mathematics, ETH Zürich, Switzerland. \texttt{beatrice.acciaio@math.ethz.ch}}
\and
Mathias Beiglböck\thanks{Faculty of Mathematics, University of Vienna, Austria. \texttt{mathias.beiglboeck@univie.ac.at}}
\and
Evgeny Kolosov\thanks{Department of Mathematics, ETH Zürich, Switzerland. \texttt{evgeny.kolosov@math.ethz.ch}}
\and
Gudmund Pammer\thanks{Institute of Statistics, TU Graz, Austria. \texttt{gudmund.pammer@tugraz.at}}
}

\maketitle

\begin{abstract}
Strassen's theorem asserts that for given marginal probabilities $\mu, \nu$ there exists a martingale starting in $\mu$ and terminating in $\nu $ if and only if $\mu, \nu$ are in convex order. From a financial perspective, it guarantees the existence of market consistent martingale pricing measures for arbitrage-free prices of European call options and thus plays a fundamental role in robust finance. 
Arbitrage-free prices of \emph{American} options demand for a stronger version of martingales which are `biased' in a specific sense. In this paper, we derive an extension of Strassen's theorem which links them to an appropriate strengthening of the convex order. Moreover, we provide a characterization of this order through integrals with respect to compensated Poisson processes. 
\end{abstract}

\section{Introduction}
While classical results characterize martingales with prescribed marginals via convex order and are sufficient to deal with no arbitrage conditions for European options, more nuanced notions are required to account for the valuation of American options. 
To explain this rigorously we need to set up some notations. Let $\Pc_1(\R)$ be the family of probability measures on the real line with finite first moment.
Recall that $\mu, \nu\in \Pc_1(\R)$  are in convex order, denoted $\mu\prec \nu$, if and only if $\int  \phi\, d\mu \leq \int \phi\, d\nu$ for every convex function $\phi$ on $\R$.
Strassen's theorem \cite{St65} asserts that for given marginal probabilities $\mu, \nu$ there exists a martingale starting in $\mu$ and terminating in $\nu $ if and only if $\mu\prec \nu$.
In mathematical finance, the convex order appears naturally from \emph{no arbitrage} considerations. Indeed, via the celebrated Breeden--Litzenberger observation \cite{BrLi78}, the prices of European call/put options for a given maturity can be encoded as the one-dimensional (marginal) distribution of the asset price process at that time. The no-arbitrage condition guarantees that these are in convex order and thus, by Strassen's theorem, there exists a martingale with these marginals. Hence Strassen's theorem can be regarded as a basic fundamental theorem of asset pricing for robust finance given European option prices, see e.g.\ \cite{HoNe12, AcBePeSc13, WiZh22} for details.

In contrast, as we describe in the forthcoming paper \cite{AcBeKoPa24}, arbitrage-free prices of American options can be described through a stronger notion of martingale which we introduce next. Let $\beta\in [0,1)$ and $(X_t)_{t=0,1}$ be a martingale on some stochastic basis $(\Omega, \F, \P, (\F_t)_{t=0,1})$. We say that  $X$ is  \emph{$\beta$-biased} if $\law(X_1|\F_0)$ has an atom of mass at least $\beta$ at the maximum of its support, i.e.\ $\P$-a.s.\ $\rho=\law(X_1|\F_0)$ satisfies\footnote{We make the convention that $\{\max S\} = \emptyset$ if $S$ is not upper bounded such that $\rho(\{\max S\})=0$ in this case.}
\begin{align*} 
    \rho(\{ \max(\supp (\rho))\})\geq \beta.
\end{align*}
Observe that this condition gets stricter as $\beta$ increases. While $0$-biased martingales are just ordinary martingales, $1$-biased martingales have to be constant. 
We call the law of a $\beta$-biased martingale a \emph{$\beta$-biased martingale coupling} and
 denote by $\cplbbc(\mu, \nu)$ the set of all such couplings which have marginals $\mu, \nu$. 

Following the role model of Strassen's theorem, our first goal is to characterize when $\cplbbc(\mu, \nu)$ is non-empty. 
To this end, we define for any continuous, linearly bounded $g:\R\to\R$ its $\beta$-envelope
\begin{equation}\label{eq:beta_transform}
g_\beta(x):=\textstyle \inf\{\int g \, d\rho:\rho \in \Pc_1(\R), \int y \, d\rho= x, \rho({\max \supp (\rho)})\geq \beta\}.
\end{equation}
 We say that $\mu, \nu\in \Pc_1(\R)$ are in \emph{$\beta$-biased order}, denoted  $\mu\bbc\nu$, if   $\mu(g_\beta) \leq \nu(g)$ for all continuous linearly bounded $g:\R\to \R$.
Note that for $\beta=0$,  $g_0$ is simply the convex hull of $g$ and $\mu\prec_0\nu$ is simply the convex order. As $\beta$ increases, the order gets progressively stronger. In the limiting case $\beta=1$, $\mu, \nu$ are ordered only in the trivial case $\mu=\nu$. 

Our first main result is the following refinement of Strassen's theorem. 
\begin{theorem}[$\beta$-biased Strassen]
\label{thm:bbc-Strassen_intro}
Let $\mu, \nu \in \Pc_1(\R)$, $\beta \in [0,1)$. Then $\cplbbc(\mu, \nu)\neq \emptyset$ if and only if $\mu\bbc \nu$.
\end{theorem}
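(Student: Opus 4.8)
\emph{The easy direction} is immediate from the definitions; for $\beta=0$ the whole statement is Strassen's classical theorem, so assume $\beta\in(0,1)$. Let $(X_t)_{t=0,1}$ be a $\beta$-biased martingale with $\law(X_0)=\mu$, $\law(X_1)=\nu$, and fix a continuous linearly bounded $g$. The regular conditional distribution $\rho:=\law(X_1\mid\F_0)$ lies in $\Pc_1(\R)$ $\P$-a.s.\ (as $\E\!\int|y|\,d\rho=\E|X_1|<\infty$), has barycenter $\int y\,d\rho=\E[X_1\mid\F_0]=X_0$, and is $\beta$-biased; hence $\rho$ is admissible in the infimum defining $g_\beta$ at the point $X_0$, so $\E[g(X_1)\mid\F_0]=\int g\,d\rho\ge g_\beta(X_0)$ $\P$-a.s. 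Taking $\E[\,\cdot\mid\sigma(X_0)]$ and then $\E$ gives $\nu(g)=\E[g(X_1)]\ge\E[g_\beta(X_0)]=\mu(g_\beta)$, i.e.\ $\mu\bbc\nu$. (That $g_\beta$ is Borel and $\mu$-integrable is part of this step: integrability follows from the sandwich $\widehat g\le g_\beta\le g$ with $\widehat g$ the convex hull of $g$, both linearly bounded, and measurability from the parametrization of the admissible set given next.)

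\emph{Reformulating the hard direction.} I would first isolate the elementary fact that, for $\beta\in(0,1)$, a probability $\rho$ is $\beta$-biased with barycenter $x$ if and only if $\rho=\beta\delta_m+(1-\beta)\sigma$ for some $m\ge x$ and some $\sigma\in\Pc_1\big((-\infty,m]\big)$ with $\int y\,d\sigma=(x-\beta m)/(1-\beta)$. Write $B(x)$ for the (Borel-measurably varying) set of such $\rho$, so that $g_\beta(x)=\inf_{\rho\in B(x)}\int g\,d\rho$. I then claim
\[
 \cplbbc(\mu,\nu)=\Big\{\pi\in\cpls(\mu,\nu):\ \pi_x=\textstyle\int\rho\,\kappa_x(d\rho)\ \text{for some Borel kernel }x\mapsto\kappa_x\in\Pc\big(B(x)\big)\Big\},
\]
where $(\pi_x)_x$ is the disintegration of $\pi$ against its first marginal $\mu$. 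The inclusion ``$\subseteq$'' holds because the $\sigma(X_0)$-disintegration of a $\beta$-biased martingale coupling is $\pi_x=\E\big[\law(X_1\mid\F_0)\mid X_0{=}x\big]$, an average of elements of $B(x)$; and ``$\supseteq$'' holds by realizing a given kernel on the stochastic basis generated by $X_0$ and an auxiliary variable $R$ with $R\mid X_0{=}x\sim\kappa_x$ and $X_1\mid(X_0,R){=}(x,\rho)\sim\rho$, for which $\F_0:=\sigma(X_0,R)$ makes $X$ a $\beta$-biased martingale with marginals $\mu,\nu$. In particular $\cplbbc(\mu,\nu)$ is a \emph{convex} subset of the $\Pc_1$-compact set $\cpls(\mu,\nu)$ (the right-hand side is stable under mixing kernels).

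\emph{The hard direction.} By the reformulation, $\mu\bbc\nu\Rightarrow\cplbbc(\mu,\nu)\ne\emptyset$ asserts that whenever $\int g_\beta\,d\mu\le\int g\,d\nu$ for all continuous linearly bounded $g$, there is a Borel kernel $x\mapsto\kappa_x\in\Pc(B(x))$ with $\int\!\!\int\rho\,\kappa_x(d\rho)\,\mu(dx)=\nu$. This is a Strassen/Cartier-type dilation statement, and I see two routes. \emph{(i) Duality:} the set $S\subseteq\Pc_1(\R)$ of measures of the above form is convex; if one shows $S$ is $\Pc_1$-closed, then a Hahn--Banach separation (equivalently a minimax argument over the compact set $\cpls(\mu,\nu)$) shows that $\nu\notin S$ produces a continuous linearly bounded $g$ with $\nu(g)<\inf_{\nu'\in S}\nu'(g)=\int\big(\inf_{\rho\in B(x)}\int g\,d\rho\big)\mu(dx)=\int g_\beta\,d\mu$, contradicting $\mu\bbc\nu$. \emph{(ii) Approximation:} pick finitely supported $\mu_n\bbc\nu_n$ with $\mu_n\to\mu$ and $\nu_n\to\nu$ in $\Pc_1(\R)$; for finitely supported marginals the equivalence $\cplbbc(\mu_n,\nu_n)\ne\emptyset\iff\mu_n\bbc\nu_n$ is a finite-dimensional linear-programming duality; then pass to a subsequential weak limit $\pi_n\to\pi\in\cpls(\mu,\nu)$ and show $\pi\in\cplbbc(\mu,\nu)$.

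\emph{The main obstacle.} In both routes the crux is that $B(x)$, although Borel and in a suitable sense closed, is neither convex nor weakly compact: its elements $\beta\delta_m+(1-\beta)\sigma$ may have arbitrarily large support and first moment as $m\to\infty$ (with $\sigma$ drifting to $-\infty$). So in (i) one may not simply replace $B(x)$ by $\overline{\mathrm{conv}}\,B(x)$ — a point of the closed convex hull need not be a barycenter of a probability concentrated on $B(x)$, as examples of non-compact closed sets show — and hence both the closedness of $S$ and the measurable extraction of the kernels $\kappa_x$ realizing the optimizer must be established directly; likewise (ii) needs stability of the class $\cplbbc$ under weak limits, i.e.\ that a $\Pc_1$-convergent sequence of $\beta$-biased martingale couplings has a $\beta$-biased martingale limit. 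Both hinge on the same lemma: controlling the ``top atom'' $\beta\delta_{m_n}$ under limits. The leverage is that the terminal law $\nu\in\Pc_1(\R)$ is fixed, so the relevant conditional first moments $\int|y|\,d\rho_n$ are bounded in $L^1$; combined with the decomposition $\rho_n=\beta\delta_{m_n}+(1-\beta)\sigma_n$ this forces $m_n$ to stay bounded and $\sigma_n$ to be tight, whence one recovers a $\beta$-biased limit with the correct barycenter. I expect this tightness/closedness analysis, together with the attendant measurable-selection bookkeeping, to be the bulk of the proof.
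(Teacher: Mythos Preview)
Your overall architecture --- reduce to a duality/separation argument over the convex set of admissible couplings --- is exactly the paper's, but you are proposing to build the machinery by hand whereas the paper imports it wholesale. The paper's proof is about ten lines: it defines the cost $c(x,p)=\sup_{f\in\Psi^\beta_x\cap C_L}\int f\,dp$, observes that $c(x,p)\in\{0,+\infty\}$ according as $p$ is or is not $\beta$-biased around $x$, and then applies the weak optimal transport duality of Backhoff--Beiglb\"ock--Pammer to the problem $T_c(\mu,\nu)=\inf_{\pi}\int c(x,\pi_x)\,\mu(dx)$, obtaining $T_c(\mu,\nu)=\sup_{\psi\in C_L}\big(\mu(\psi_\beta)-\nu(\psi)\big)$ directly. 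All of your closedness, measurable-selection, and interchange-of-inf-and-integral issues are already packaged inside that cited theorem.

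Your ``main obstacle'' is real for your bare-hands route, but it dissolves once you have the lemma the paper proves first (Lemma~2.2 / Corollary~2.3): the set of mixtures of your $B(x)$ --- i.e.\ the set of $\beta$-biased probabilities around $x$ --- is \emph{exactly} $\{\rho:\rho(f)\le 0\ \forall f\in\Psi^\beta_x\}$, a closed convex set cut out by linear inequalities. So your worry that ``a point of the closed convex hull of $B(x)$ need not be a barycenter of a probability on $B(x)$'' is unfounded here: the closed convex hull equals the set of mixtures, by that lemma. This is the missing idea in your proposal; once you have it, $c(x,\cdot)$ is lower semicontinuous and convex for free, and weak OT duality applies. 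Your proposed tightness argument (``$m_n$ stays bounded because $\nu$ has finite first moment'') would in any case need more care --- it gives only $L^1$-control of $m$, not pointwise bounds --- but with the test-function characterization you never need it.
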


The classical convex order is also a necessary and sufficient conditions for two probabilities to appear as initial and terminal distribution in the Skorokhod embedding problem (see e.g.\ \cite{Sk61, Ho11, Ob04, BeCoHu14}) or  as initial and terminal marginal of Bass martingales which are particular integrals w.r.t.\ Brownian motion (see e.g.\ \cite{GuLoWa19, BaBeHuKa20, CoHe21, AcMaPa23}).
Notably, the $\beta$-biased order can also be characterized in terms of integrals, though with respect to compensated Poisson processes. To this end, we fix a standard Poisson process $(N_t)_{t\geq 0}$ and assume that we are working on a stochastic basis which is rich enough to support a continuous random variable at time $0$. 
For $t \geq 0$, let $M_t := t - N_t$ denote the (negative) compensated Poisson process. 
Then we have the following characterization.
\begin{theorem}\label{thm:order_integral_Representation_Intro}
    Let $\mu,\nu\in \Pc_1(\R), \beta \in (0,1)$. Then $\cplbbc(\mu, \nu)\neq \emptyset$  if and only if,  for some $X_0 \sim \mu$
    and predictable $H \geq 0$ with  
    $\E\big[ \int_0^{\infty} H_s \, ds \big] < \infty $,
    \begin{equation}\label{eq:thm_order^Representation}
         X_0 + \int_0^{\log(1/\beta)} H_s \,dM_s \sim \nu.
    \end{equation}
\end{theorem}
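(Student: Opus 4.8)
The plan is to invoke Theorem~\ref{thm:bbc-Strassen_intro}, which already gives $\cplbbc(\mu,\nu)\neq\emptyset\iff\mu\bbc\nu$, so that it suffices to show that a representation \eqref{eq:thm_order^Representation} exists if and only if $\mu\bbc\nu$. Throughout write $T:=\log(1/\beta)\in(0,\infty)$ and, given $X_0$ and $H$, $Y_t:=X_0+\int_0^t H_s\,dM_s$ for $t\in[0,T]$. Since $M$ is a martingale and $H\ge0$ is predictable with $\E\int_0^T H_s\,ds<\infty$, both $\int_0^T H_s\,ds$ and $\int_0^T H_s\,dN_s$ are integrable, so $Y$ is a genuine martingale with $Y_0=X_0\sim\mu$; the content is to match the terminal law and, conversely, to read off the order.

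\textbf{Sufficiency of the order.} Assuming $\mu\bbc\nu$, Theorem~\ref{thm:bbc-Strassen_intro} produces $\pi\in\cplbbc(\mu,\nu)$, which I disintegrate as $\pi(dx,dy)=\mu(dx)\,\pi_x(dy)$, so that $\mu$-a.e.\ $\pi_x$ is $\beta$-biased with barycenter $x$; in particular $b(x):=\max\supp(\pi_x)<\infty$ and $\pi_x(\{b(x)\})\geq\beta$. The core step is to realise each $\pi_x$ as the time-$T$ law of a compensated-Poisson integral started at $x$: let $Y$ drift upward before the first jump at a rate chosen so that on the event of no jump in $[0,T]$, which has probability exactly $e^{-T}=\beta$, the process reaches $b(x)$; define the post-jump dynamics recursively so that, conditionally on a first jump at time $s$, the remaining part of $\pi_x$ is produced over the residual time $T-s$ (the recursion terminates because the time, hence the mass still to be distributed, shrinks to $0$). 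Carrying this out measurably in $x$ — e.g.\ through the quantile function of $\pi_x$, which depends measurably on $x$ by standard disintegration — yields a predictable $H^x\ge0$ with $x+\int_0^T H^x_s\,dM_s\sim\pi_x$. Taking $X_0\sim\mu$ independent of $N$ (which the stochastic basis supports) and setting $H_s:=H^{X_0}_s$ gives \eqref{eq:thm_order^Representation}, and $\E\int_0^T H_s\,ds=\int\E\int_0^T H^x_s\,ds\,\mu(dx)$ is finite, as one checks from the construction using $\nu\in\Pc_1(\R)$.

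\textbf{Necessity of the order.} Conversely, suppose \eqref{eq:thm_order^Representation} holds; by Theorem~\ref{thm:bbc-Strassen_intro} it suffices to prove $\mu\bbc\nu$, i.e.\ $\mu(g_\beta)\leq\nu(g)$ for every continuous linearly bounded $g$. Since $\nu(g)=\E\bigl[\E[g(Y_T)\mid\F_0]\bigr]$ and $\mu(g_\beta)=\E[g_\beta(Y_0)]$, it is enough to establish the pointwise bound $\E[g(Y_T)\mid\F_0]\geq g_\beta(Y_0)$. For $t\in[0,T]$ set $V(t,y):=g_{\exp(-(T-t))}(y)$, so that $V(T,\cdot)=g$ (the only $1$-biased measure with barycenter $y$ is $\delta_y$) and $V(0,\cdot)=g_\beta$. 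Decomposing the evolution of $Y$ after any time according to the next jump of $N$, the bound follows by a backward induction over the (a.s.\ finite) jumps of $N$ whose inductive step is the analytic inequality
\begin{equation*}
e^{-r}\,g\bigl(\psi(r)\bigr)+\int_0^r e^{-w}\,g_{\exp(-(r-w))}\bigl(\psi(w)-h(w)\bigr)\,dw\;\geq\;g_{e^{-r}}(y),
\end{equation*}
valid for all $r>0$, $y\in\R$, and $h\colon[0,r]\to[0,\infty)$ with $\int_0^r h<\infty$, where $\psi(w):=y+\int_0^w h(u)\,du$; morally this says that $V$ is a sub-solution of $\partial_t V(t,y)+h\,\partial_y V(t,y)+V(t,y-h)-V(t,y)\geq0$ for all $h\ge0$, making $t\mapsto V(t,Y_t)$ a submartingale on $[0,T]$ and hence $\E[g(Y_T)\mid\F_0]=\E[V(T,Y_T)\mid\F_0]\geq V(0,Y_0)=g_\beta(Y_0)$. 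I would prove the displayed inequality from the definition \eqref{eq:beta_transform} of $g_\gamma$ by gluing near-optimal $\gamma$-biased measures along the jump times, after reducing to near-optimizers with controlled support (the inequality being trivial when $g_\beta\equiv-\infty$, i.e.\ when $g$ admits no affine minorant).

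\textbf{Main obstacle.} The technical heart lies in the two single-step statements: the explicit recursive construction realising an arbitrary $\beta$-biased measure as a time-$\log(1/\beta)$ compensated-Poisson integral, with its measurable dependence on the barycenter and the attendant integrability; and the analytic sub-solution inequality for the envelope flow $\gamma\mapsto g_\gamma$. The latter is genuinely delicate because $\law(Y_T\mid\F_0)$ need not itself be $\beta$-biased — stopping $Y$ at its first jump $\tau_1\wedge T$ does yield a $\beta$-biased conditional law, but only of $\mu$ against a marginal lying in convex order below $\nu$ — which is exactly why one argues through the envelope $g_\beta$ rather than exhibiting the coupling directly; the lack of regularity of $g_\gamma$ also requires an approximation argument in passing to the submartingale property. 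Everything else — the two appeals to Theorem~\ref{thm:bbc-Strassen_intro}, the martingale property of $Y$, the disintegration, and the independent randomisation of $X_0$ — is routine.
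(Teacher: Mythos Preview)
Your sufficiency argument contains a genuine error. From $\pi_x$ being $\beta$-biased around $x$ you conclude ``in particular $b(x):=\max\supp(\pi_x)<\infty$ and $\pi_x(\{b(x)\})\geq\beta$'', but that is the definition of \emph{atomic} $\beta$-biased, which is strictly stronger than $\beta$-biased (Definition~\ref{def.bb}). A $\beta$-biased measure is only a \emph{mixture} of atomic ones and need have neither bounded support nor any atom: by Remark~\ref{rem:symm} the uniform law on $[-1,1]$ is $1/2$-biased, and $\sum_{n\ge1}2^{-n}\bigl(\tfrac12\delta_{-n}+\tfrac12\delta_n\bigr)$ is $1/2$-biased with unbounded support. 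Since $\cplbbc(\delta_0,\nu)=\{\delta_0\otimes\nu\}$ whenever $\nu$ is $\beta$-biased, these already give counterexamples with $\mu=\delta_0$. Consequently your ``drift to $b(x)$, then recurse after the first jump'' construction does not get off the ground, and even for atomic $\pi_x$ the recursion is underspecified: what exactly is ``the remaining part of $\pi_x$'' to be embedded from the random post-jump position, and why should it be $e^{-(T-s)}$-biased around that position? The paper supplies the missing structural step: first decompose $\pi_x=\int\pi_{x,u}\,\lambda(du)$ into \emph{simple} $\beta$-biased pieces via Lemma~\ref{lem:atomis_as_a_mixture_of_simple}, then embed each simple $\pi_{x,u}$ by taking $H_s=H(s)\mathbf 1_{s\le\tau_1}$ with $H(\cdot)$ deterministic and \emph{stopping at the first jump}; the function $H(\cdot)$ is determined by matching the law of $\int_0^{\tau_1}H(s)\,ds-H(\tau_1)$ on $\{\tau_1\le T\}$ to $\nu^L/(1-\beta)$, which reduces to an explicit linear Volterra equation in the quantile function of $\nu^L$. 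No recursion is involved.

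For necessity you take a route different from the paper's. The paper proves $\nu_s\prec_{e^{-(t-s)}}\nu_t$ first for $\F_0$-measurable integrands (Lemma~\ref{lem:peackock_simple_integrand}), chains these via the gluing Theorem~\ref{thm:transitivity}, and then passes to general $H$ by $L^1$-approximation of the integrand (Corollary~\ref{corollary: biased-peacock}). Your submartingale idea with $V(t,y)=g_{\exp(-(T-t))}(y)$ is appealing, but the displayed one-step inequality is exactly where the work hides: to bound $g_{e^{-r}}(y)$ by the right-hand side you must show that the law obtained by drifting along $\psi$ for an $\mathrm{Exp}(1)$ time, jumping down by $h$, and then dispersing in an $e^{-(r-w)}$-biased way is itself $e^{-r}$-biased around $y$. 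Since the post-jump dispersion can overshoot $\psi(r)$, this measure is not atomic $e^{-r}$-biased, and verifying that it is $e^{-r}$-biased is essentially the content of Theorem~\ref{thm:transitivity}. So your necessity argument, if completed, would not bypass the paper's key lemma but repackage it.
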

Note that Theorem \ref{thm:order_integral_Representation_Intro} also holds in the case $\beta=0$ if we interpret $\log(1/ \beta)$ as $+\infty$.
That is, if $\mu, \nu$ are in convex order, then for some $X_0 \sim \mu$    and predictable, integrable  $H \geq 0$ we have $
         X_0 + \int_0^{\infty} H_s \,dM_s \sim \nu. $

Simple examples show that $\prec_{\beta}$ is not transitive, i.e.\ $\mu\prec_{\beta}\nu$ and $\nu\prec_{\beta} \rho$ do not imply that $\mu \prec_{\beta} \rho$. However, by Theorem \ref{thm:order_integral_Representation_Intro} we have that $\mu\prec_{\beta_1}\nu$ and $\nu\prec_{\beta_2} \rho$ imply that $\mu\prec_{\beta_1 \beta_2} \rho$, and it is easy to see that this result cannot be improved. That is, one cannot find a constant $\tilde \beta  > \beta_1 \beta_2$ such that the results holds for $\prec_{\tilde \beta}$. 

Finally, in our financial applications \cite{AcBeKoPa24} we are also interested in martingales $X$ which are  
\emph{strongly $\beta$-biased} in the sense that $\law(X_1|\F_0)$ has an atom of mass strictly larger than $\beta$ at the maximum of its support, i.e.\ $\P$-a.s.\ $\rho=\law(X_1|\F_0)$ satisfies
\begin{align*}
    \rho(\{ \max \supp(\rho) \})> \beta.
\end{align*}
We call the law of a strongly  $\beta$-biased martingale a \emph{strongly $\beta$-biased martingale} coupling and
 denote by $\cplsbbc(\mu, \nu)$ the set of all such couplings which have marginals $\mu, \nu$. 
We will say that $\mu, \nu$ are in strong $\beta$-biased order, denoted by $\mu\sbbc\nu$, if $\cplsbbc(\mu, \nu)\neq \emptyset$.  In contrast to the above, we do not know a simple Strassen-type criterion for the strong order.
 However, we show the following characterization:  
\begin{theorem}\label{thm:strict_order_integral_Representation_Intro}
    Let $\mu,\nu\in \Pc_1(\R), \beta \in (0,1)$. Then  $\cplsbbc(\mu, \nu)\neq \emptyset$ if and only if,  for some $X_0 \sim \mu$,
predictable $H \geq 0$ with  
    $\E\big[ \int_0^{\infty} H_s \, ds \big] < \infty $, and stopping time $\tau < \log(1/\beta)$,
    \begin{equation}\label{eq:strict_thm_order^Representation}
         X_0 + \int_0^{\tau} H_s \,dM_s \sim \nu.
    \end{equation}
\end{theorem}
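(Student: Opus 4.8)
The plan is to revisit the proof of Theorem~\ref{thm:order_integral_Representation_Intro}. Conceptually, a strongly $\beta$-biased coupling differs from a $\beta$-biased one only in that the conditional atom at the top of the support has mass \emph{strictly} above $\beta$, and this strict inequality translates into the deterministic horizon $\log(1/\beta)$ being replaced by a stopping time that stays strictly below it.

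\emph{Necessity.} Let $\pi\in\cplsbbc(\mu,\nu)$ and disintegrate $\pi=\mu(dx)\,\rho_x(dy)$. By strong bias, $p(x):=\rho_x(\{\max\supp\rho_x\})>\beta$ for $\mu$-a.e.\ $x$, and $x\mapsto p(x)$ may be chosen measurable (the set $\{p(x)=1\}$, where $\rho_x=\delta_x$, is handled trivially with local horizon $0$). Since $\rho_x$ has an atom of mass $p(x)$ at the top of its support, the deterministic coupling $\delta_x\otimes\rho_x$ witnesses that $\delta_x$ and $\rho_x$ are in $p(x)$-biased order; Theorem~\ref{thm:order_integral_Representation_Intro}, read with $p(x)$ in place of $\beta$, then yields a predictable $h^x\ge0$ (with respect to the Poisson filtration) with $\E[\int_0^\infty h^x_s\,ds]<\infty$ and $x+\int_0^{\log(1/p(x))}h^x_s\,dM_s\sim\rho_x$. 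A measurable selection $x\mapsto h^x$ — whose cost $\E[\int_0^\infty h^x_s\,ds]$ is $\mu$-integrable, as one reads off from the construction in Theorem~\ref{thm:order_integral_Representation_Intro} using $\mu,\nu\in\Pc_1$ — produces $H_s:=h^{X_0}_s$, predictable on the given basis, and $\tau:=\log(1/p(X_0))$, which is $\F_0$-measurable, hence a stopping time, and satisfies $\tau<\log(1/\beta)$ a.s. Conditioning on $\F_0$ and using that $N$ is independent of $\F_0$ gives $X_0+\int_0^\tau H_s\,dM_s\sim\nu$ with $\E[\int_0^\infty H_s\,ds]<\infty$.

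\emph{Sufficiency.} Suppose $X_1:=X_0+\int_0^\tau H_s\,dM_s\sim\nu$ with $X_0\sim\mu$, $H\ge0$ predictable, $\E[\int_0^\infty H_s\,ds]<\infty$, and $\tau<\log(1/\beta)$. Since $\int_0^\tau H_s\,dM_s=\int_0^{\log(1/\beta)}H_s\mathbf 1_{\{s\le\tau\}}\,dM_s$ with $H\mathbf 1_{[0,\tau]}\ge0$ predictable, Theorem~\ref{thm:order_integral_Representation_Intro} gives $\mu\bbc\nu$, hence by Theorem~\ref{thm:bbc-Strassen_intro} a $\beta$-biased coupling; the point is to upgrade ``biased'' to ``strongly biased''. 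Condition on $\F_0$ and let $A$ be the event that $N$ has no jump before $\tau$. On $A$, $\tau$ coincides with an $\F_0$-measurable $\tau_0<\log(1/\beta)$ and $X_1$ equals the $\F_0$-measurable value $m_0:=X_0+\int_0^{\tau_0}H_s\,ds\ge X_0$ (the integrand along the no-jump path is $\F_0$-measurable), while $\P(A\mid\F_0)=e^{-\tau_0}>\beta$. Thus $\law(X_1\mid\F_0)=e^{-\tau_0}\delta_{m_0}+(1-e^{-\tau_0})\,\law(X_1\mid\F_0,A^c)$ already has an atom of mass $>\beta$ at $m_0$; the only defect is that $m_0$ need not be the top of the support, since $H$ can become large after a jump. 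To repair this one peels: with $\alpha:=\E[e^{-\tau_0}]\in(\beta,1)$ one writes $\nu=\alpha\lambda+(1-\alpha)\theta$ for suitable probabilities $\lambda,\theta$ built from $e^{-\tau_0}\delta_{m_0}$ and $\law(X_1\mid\F_0,A^c)$, and re-couples $\mu$ with $\nu$ fibrewise — pairing the atom of conditional mass $e^{-\tau_0}$ with the largest still-available locations — so that each remaining conditional sub-probability again carries an atom of mass $>\beta$ at its maximum; solvability of the successive sub-problems is guaranteed by Theorems~\ref{thm:bbc-Strassen_intro}--\ref{thm:order_integral_Representation_Intro}. This is essentially the ``if'' construction of Theorem~\ref{thm:order_integral_Representation_Intro}, carried out while keeping track of the strict inequality.

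I expect the peeling to be the main obstacle: it must be organised so as to terminate (or converge) and so that the fibrewise atom masses stay strictly above $\beta$ at every stage, not merely at the first. The quantitative input is the identity $\E[\mathbf 1_A\,e^{\tau}\mid\F_0]=1$, obtained by optional stopping from the hazard martingale $t\mapsto e^{t}\mathbf 1_{\{N\text{ has no jump in }[0,t]\}}$, which together with $\tau<\log(1/\beta)$ gives $\P(A\mid\F_0)>\beta$; the crux is to verify that an analogous strict surplus is preserved through the construction.
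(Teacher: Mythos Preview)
Your opening claim is false: a strongly $\beta$-biased kernel $\rho_x$ need \emph{not} satisfy $\rho_x(\{\max\supp\rho_x\})>\beta$. By Definition~\ref{def:str_biased} (and Definition~\ref{def:sbbc-order}), $\rho_x$ is merely a \emph{mixture} of simple probabilities each carrying an atom of mass $>\beta$; the mixture itself may put arbitrarily small mass at its top point (take the equal mixture of $\tfrac13\delta_{-2}+\tfrac23\delta_1$ and $\tfrac13\delta_{-4}+\tfrac23\delta_2$: strongly $\tfrac12$-biased, but the atom at $2$ has mass $\tfrac13$). The fix is exactly what the paper does: first decompose $\rho_x=\int\rho_{x,u}\,\lambda(du)$ into simple $\beta(x,u)$-biased pieces with $\beta(x,u)>\beta$, then apply the integral construction to each piece and set $\tau=-\log\beta(X_0,\hat U)$ for an auxiliary $\hat U\sim\lambda$. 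Your $\tau$ is then $\F_0$-measurable but depends on \emph{both} $X_0$ and $\hat U$, not on $X_0$ alone.

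\textbf{Sufficiency.} Two problems. First, your assertion that ``the integrand along the no-jump path is $\F_0$-measurable'' and that $\tau$ equals an $\F_0$-measurable $\tau_0$ on $A$ presupposes that the ambient filtration is $\F_0\vee\F^N$. The statement allows an arbitrary filtration supporting $N$; a general predictable $H$ need not reduce to a function of $(\F_0,\text{time})$ before the first jump. The paper addresses precisely this via Proposition~\ref{prop:extension_of_integrand}, which rebuilds $(H,M,\tau)$ on an extended space so that on $[0,t_0]$ the integrand \emph{does} have the simple form $\bar H(s,\bar U)\mathbf 1_{s\le\tilde\tau_1}$, while preserving the law of the stopped integral. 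Without this step (or the restrictive filtration assumption) your computation of $\P(A\mid\F_0)$ and $m_0$ is not justified.

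Second, even granting the simple structure, the ``peeling'' you sketch is not a proof: you acknowledge that $m_0$ need not be $\max\supp\law(X_1\mid\F_0)$, propose to re-couple fibrewise, and then stop. The paper's route here is quite different and avoids any iterative re-coupling: it combines the multiplicative gluing Theorem~\ref{thm:transitivity} with the two sufficient conditions of Lemma~\ref{lem:sufficient_conditions_strict_order} (either $S(\pi_{x_1})<S(\nu_2)$, or $\pi_{x_1}(\{S(\pi_{x_1})\})>\beta_2$), applied at an intermediate time $t_0\le\eta_0$ where $\nu_{t_0}$ is atomic $e^{-t_0}$-biased. A case split on whether $\nu_{t_0}$ is simple, together with the observation that on $\{\tau_1\ge\eta_0\}$ one has $\tau=\eta_0$ (so $\pi_{x_{\eta_0}}=\delta_{x_{\eta_0}}$), closes all cases in finitely many steps. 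Your peeling idea, if it can be made to work, would be a genuinely different argument, but as written it is a heuristic rather than a proof, and you have not shown why the strict surplus survives the re-coupling.
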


\paragraph{Motivation from  arbitrage-free pricing of American options.}
To give an intuition about the need of an order stronger than convex order, we consider the simple case of a one-step market model with a deterministic num\'eraire $B_0=1, B_1>1$ and a risky asset with value $s_0$ at time $0$. We  let $\beta = 1-1/B_1$  and assume that, for every $k\geq 0$, American put options on $S\geq0$ with maturity $1$ and strike $k$ are liquidly traded, say at price $p(k)$. Then one can prove (see \cite{AcBeKoPa24}) that those prices are compatible with no-arbitrage, in the sense that there exists a martingale $(S_t)_{t=0,1}$ on some stochastic basis $(\Omega, \F, \P, (\F_t)_{t=0,1})$  such that 
\[
p(k)=\sup_{\tau \in \{0,1\} \text{ stopping time}} \E_\Q[(k/B_\tau - S_\tau/B_\tau)_+ ],   
\quad k\geq 0,
\]
if and only if $p$ is the potential measure of a measure $\nu\in\mathcal{P}_1([0,+\infty))$ 
(in the sense $p(k)=\int (k-x)_+\nu(dx),\; k \geq 0$)  
 satisfying $$ \delta_{s_0}\prec_{s\beta} \nu. $$ 
Figure \ref{fig:crash_point_example} depicts $p$ in the case of trivial $\F_0$.
\begin{figure}[ht]
    \centering
    \includegraphics[width=0.5\textwidth]{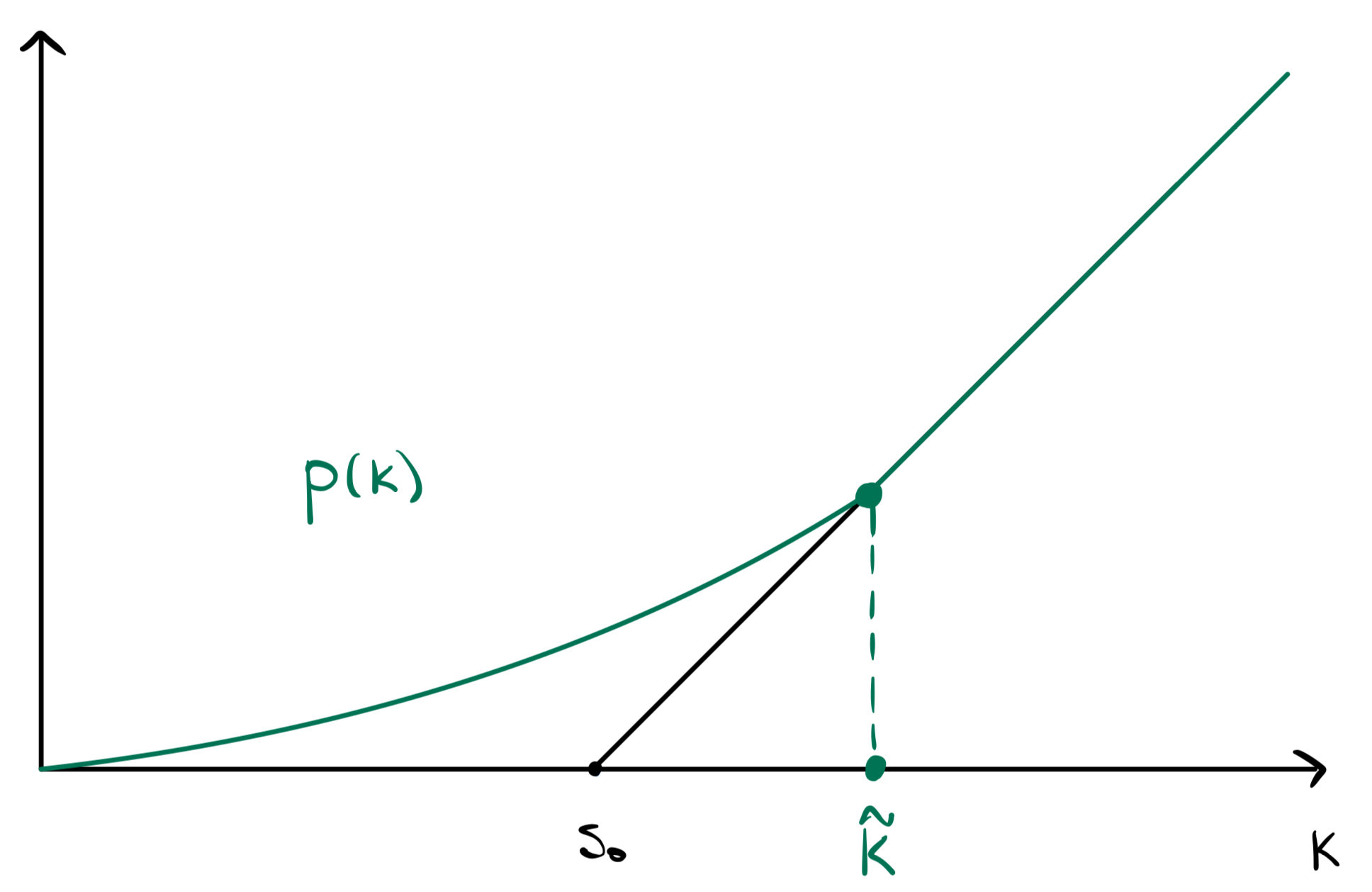}
    \caption{Price curve of American put options in a one-step model. For strikes $k \geq \tilde{k} = \max \supp(\nu)$, it is optimal to exercise the option at time $t = 0$; otherwise, at time $t = 1$.}
    \label{fig:crash_point_example}
\end{figure}

\paragraph{Notation.} 
We introduce here some notation used throughout the paper.  By $\Pc(\R)$ we denote the set of probability measures on $\R$, and by $\Pc_1(\R)$ the subset of measures with finite first moment.

For a measure $\mu$ on $\R$ with finite first moment, we write $\tm{\mu}:=\mu(\R)$ for its total mass, $\bar{\mu} := \frac{1}{\tm{\mu}} \int y \, \mu(dy)$ for its barycenter, and define 
\[
\mu^L := \mu|_{(-\infty,\bar{\mu})}\quad \text{and}\quad \mu^R := \mu|_{[\bar{\mu},+\infty)}.
\]
For $\mu,\nu \in \Pc(\R)$, we denote by $\cpls(\mu,\nu)$ the set of probability measures $\pi \in \Pc(\R\times\R)$ with first marginal $\mu$ and second marginal $\nu$, called couplings of $\mu$ and $\nu$. If $\mu,\nu \in \Pc_1(\R)$, a martingale coupling of $\mu$ and $\nu$ is a coupling $\pi \in \cpls(\mu,\nu)$ such that $\bar{\pi}_x = x$ $\mu$-a.s., where $(\pi_x)_x$ is the $\mu$-disintegration of $\pi$.
For $\mu,\nu \in \Pc_1(\R)$, we consider the 1-Wasserstein distance
\[
    \mathcal W_1(\mu,\nu) := \inf_{\pi \in \cpls(\mu,\nu)} \int |x-y| \, d\pi(x,y),
\]
and endow $\Pc_1(\R)$ with the topology induced by $\mathcal W_1$.
For finite measures $\mu$ and $\nu$ on $\R$ with finite first moment, we say that $\mu$ is smaller than $\nu$ in convex order, denoted $\mu \prec \nu$, if  
$\int \phi \, d\mu \leq \int \phi \, d\nu$  
for every convex function $\phi$ on $\R$.  
Given a measurable function $T: \R \to \R$ and a measure $\mu$ on $\R$, we write $T_{\#} \mu$ for the push-forward measure of $\mu$ under $T$, that is, $T_{\#} \mu(A) = \mu(T^{-1}(A))$ for any Borel set $A\in \B(\R)$.
For a measure $\nu$ on $\R$, we denote the extreme points of its support by
\begin{equation*} \label{eq:maxsupp_notation}
    s(\nu) := \inf \supp(\nu)\quad \text{and}\quad S(\nu) := \sup \supp(\nu).
\end{equation*}
We denote by $C_L(\R)$ the space of continuous functions $f: \R \to \R$, such that there exists $C > 0$ with $|f(x)| \leq C(1 + |x|)$ for all $x \in \R$. We write $\lambda$ for the Lebesgue measure on $[0,1]$.

\section{Biased probability measures and biased Strassen}\label{sec: biased_probabilities}

In this section we introduce and develop the framework for the $\beta$-biased order, a refinement of the classical convex order. In particular, we introduce $\beta$-biased probability measures as the basic building blocks of this structure, and prove our first main result---a Strassen-type characterization (Theorem \ref{thm:bbc-Strassen_intro}).

\subsection{Biased probability measures}\label{subsec:biased-probabilities}

We start by introducing the concept of $\beta$-biased probability measure and study its main properties.
In analogy to the convex order, where any probability measure on $\R$ with zero mean can be decomposed into a mixture of measures consisting of two atoms and centered at $0$, we define simple and atomic $\beta$-biased probabilities. These are required to have an atom of mass at least $\beta$ at the rightmost point of their support.

\begin{definition} \label{def.bb}
Let $\nu \in \Pc_1(\R)$ be centered at $0$ and $\beta \in (0,1)$. 
We say that
\begin{enumerate}
    \item $\nu$ is \emph{simple $\beta$-biased if $\nu|_{[0,\infty)}$ consists of a single atom with mass at least $\beta$};
    \item $\nu$ is \emph{atomic $\beta$-biased} if $S(\nu) < \infty$ and $\nu(\{S(\nu)\}) \geq \beta$;
    \item $\nu$ is \emph{$\beta$-biased} if $\nu$ is an average of atomic $\beta$-biased probabilities,
    i.e.\ $\nu = \int \nu_x \, \rho(dx)$, where $(\nu_x)_x$ are atomic $\beta$-biased probabilities and $\rho \in \Pc(\R)$. 
\end{enumerate} 
\end{definition} 

Note that if $\nu$ is $\beta$-biased, then $\tm{\nu^R} \geq \beta$. 

\begin{lemma}\label{lem:atomis_as_a_mixture_of_simple}
Every atomic $\beta$-biased probability is an average of simple $\beta$-biased probabilities. In particular, $\nu$ is $\beta$-biased if and only if $\nu$ is an average of simple $\beta$-biased probabilities.
\end{lemma}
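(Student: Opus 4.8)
The plan is to produce an explicit mixture. Fix an atomic $\beta$-biased $\nu$; by Definition~\ref{def.bb} it is centered at $0$, $S := S(\nu) < \infty$, and $p := \nu(\{S\}) \geq \beta$. Following the paper's notation (and using $\bar\nu = 0$), write $\nu^R = \nu|_{[0,\infty)}$, $\nu^L = \nu|_{(-\infty,0)}$, and set $c := \int y\,\nu^R(dy) = -\int y\,\nu^L(dy) \geq 0$. The case $\nu = \delta_0$ is trivial since $\delta_0$ is itself simple $\beta$-biased, so assume $\nu \neq \delta_0$; then $S > 0$ and $\tm{\nu^L} \in (0,1]$, and since $\nu^R$ carries the atom at $S$ we have $\tm{\nu^L} \leq 1 - p \leq 1-\beta$. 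The one quantitative fact I need is the lower bound on the barycenter of $\nu^L$,
\begin{equation*}
    b \;:=\; -\bar{\nu^L} \;=\; \frac{c}{\tm{\nu^L}} \;\geq\; \frac{\beta}{1-\beta}\,S,
\end{equation*}
which follows from $c \geq S\cdot p \geq \beta S$ together with $\tm{\nu^L} \leq 1-\beta$.

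Next I would write down the simple $\beta$-biased building blocks. For $x \in \supp(\nu^R) \subseteq [0,S]$ put
\begin{equation*}
    q_x := \frac{b}{b+x} \in [\beta,1], \qquad \sigma_x := q_x\,\delta_x + (1-q_x)\,\frac{\nu^L}{\tm{\nu^L}},
\end{equation*}
where $q_x \geq \frac{b}{b+S} \geq \beta$ is exactly the displayed bound. Each $\sigma_x$ is a probability measure whose restriction to $[0,\infty)$ is the single atom $q_x\delta_x$ of mass $q_x \geq \beta$, and it is centered at $0$ since $q_x x + (1-q_x)\,\bar{\nu^L} = \frac{bx}{b+x} - \frac{xb}{b+x} = 0$; hence $\sigma_x$ is simple $\beta$-biased, and $x\mapsto\sigma_x$ is weakly continuous, so measurable. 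As mixing measure I take $\rho(dx) := (1 + x/b)\,\nu^R(dx)$, a probability measure because $\tm{\rho} = \tm{\nu^R} + c/b = \tm{\nu^R} + \tm{\nu^L} = 1$.

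It remains to verify $\int \sigma_x\,\rho(dx) = \nu$, which I expect to be a one-line computation once split into the two parts of $\sigma_x$: since $q_x(1 + x/b) = 1$ the atomic parts assemble to $\int \delta_x\,\nu^R(dx) = \nu^R$, while $(1-q_x)(1+x/b) = x/b$ gives $\int (1-q_x)(1+x/b)\,\nu^R(dx) = c/b = \tm{\nu^L}$, so the second parts assemble to $\tm{\nu^L}\cdot\frac{1}{\tm{\nu^L}}\nu^L = \nu^L$; adding yields $\nu$. This proves the first assertion. For the ``in particular'' I would note that every simple $\beta$-biased measure $\sigma$ is atomic $\beta$-biased — its unique atom $a$ in $[0,\infty)$ satisfies $a = S(\sigma) < \infty$ and $\sigma(\{a\}) \geq \beta$ — so, combining with Definition~\ref{def.bb}(3) and the fact that an average of averages is again an average, $\nu$ is $\beta$-biased if and only if it is an average of simple $\beta$-biased probabilities.

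Finally, a word on the difficulty: there is no real obstacle once the pieces $\sigma_x$ and the weight $\rho$ have been guessed; the hypothesis is used in exactly one place, the bound $b \geq \frac{\beta}{1-\beta}S$, which is what forces $q_x \geq \beta$ for all $x \leq S$, so that the atomicity condition (mass $\geq\beta$ at the top of the support) is precisely what is needed. The remaining points are routine: weak measurability of $x\mapsto\sigma_x$ (and of the iterated construction used in the ``in particular'' part), a Fubini argument on the compact parameter set $\supp(\nu^R)\subseteq[0,S]$ to exchange the mixture integral with the splitting $\nu = \nu^R + \nu^L$, and the harmless degenerate case $x=0$ where $q_0 = 1$ and $\sigma_0 = \delta_0$.
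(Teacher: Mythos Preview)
Your proof is correct and follows essentially the same approach as the paper's: your $q_x$, $\sigma_x$, and $\rho$ are exactly the paper's $B_x$, $\nu_x$, and $\rho$ rewritten via $b = -\bar\nu^L$ (since $B_x = \frac{-\bar\nu^L}{x-\bar\nu^L} = \frac{b}{b+x} = q_x$ and $\rho(dx) = \nu^R(dx)/B_x = (1+x/b)\,\nu^R(dx)$), and your key inequality $b \ge \frac{\beta}{1-\beta}S$ is the same as the paper's bound on $B_x$. The verification of the three properties and the ``in particular'' argument are likewise parallel.
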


\begin{remark}\label{rem:meausurability_of_decomposition}
    By Lemma \ref{lem:atomis_as_a_mixture_of_simple} every $\beta$-biased $\nu$ can be written in the form $\nu = \int \nu_x \, \rho(dx)$. It will be clear from the proof that we can take $\rho$ to be the Lebesgue measure $\lambda$ on $[0,1]$ and that the mapping $\nu \mapsto \lambda \otimes \nu_x \in \cpls(\lambda,\nu) $ can be taken to be Borel measurable.
\end{remark}

\begin{proof}
    Let $\nu \in \mathcal P_1(\R)$ be an atomic $\beta$-biased probability. Our goal is to directly find a representation of $\nu$ as a mixture of simple $\beta$-biased probabilities, i.e., we want to construct a kernel $(\nu_x)_{x \in \R}$ such that $\nu = \int \nu_x \, \rho(dx)$ for some $\rho \in \Pc(\R)$, and $\nu_x$ is simple $\beta$-biased for $\rho$-a.e. $x$.
    If $\nu = \delta_0$ there is nothing to show.
    Otherwise, $\bar \nu^L < 0$ and we define for $x \in [0,S(\nu)]$
    \begin{equation}\label{eq:decomposition_to_simple_atomic}
        B_x := \frac{- \bar{\nu}^L}{x - \bar{\nu}^L },\quad \rho(dx) := \frac{\nu^R(dx)}{B_x},\quad \nu_x := \frac{1-B_x}{\tm{\nu^L}} \nu^L + B_x \delta_x.
    \end{equation}
    Clearly, we have that $B_x \in [0,1]$ and $(\nu_x)_{x \in [0,S(\nu)]}$ is a  kernel on $\R$.
    To complete the proof, it remains to verify all of the following:
    \begin{enumerate}[label = (\roman*)]
        \item \label{it:L.atomis.proof.1} $\rho$ is a probability measure concentrated on $[0,S(\nu)]$;
        \item \label{it:L.atomis.proof.2} $\nu_x$ is simple $\beta$-biased for $x \in [0,S(\nu)]$;
        \item \label{it:L.atomis.proof.3} $\nu = \int \nu_x \, \rho(dx)$.
    \end{enumerate}
    To see \ref{it:L.atomis.proof.1}, it suffices to show that $\tm{\rho} = 1$. 
    Since $\bar{\nu} = 0$ we have that $\int y \, d\nu^R = - \tm{\nu^L} \bar{\nu}^L$ and thus
    \[
    \tm{\rho}  = \int \frac{x - \bar{\nu}^L}{-\bar{\nu}^L} \, \nu^R(dx) =  \frac{-\|\nu^L\|\bar \nu^L - \|\nu^R\| \bar \nu^L}{-\bar \nu^L} = 1.
    \]
    To get \ref{it:L.atomis.proof.2}, fix $x \in [0,S(\nu)]$.
    We have $\bar{\nu}_x = (1 - B_x) \bar{\nu}^L + B_x x = \bar{\nu}^L + B_x(x - \bar{\nu}^L) = 0$.
    Moreover, using that $\tm{\nu^R} \ge \nu(\{S(\nu)\}) \ge \beta$ since $\nu$ is atomic $\beta$-biased, $\bar \nu^L \le 0$ and $\|\nu^L\| \le 1 - \beta$, we get
    \[
        0 = \bar \nu = \bar \nu^L \tm{\nu^L} + \bar \nu^R \tm{\nu^R} \ge (1 - \beta) \bar \nu^L + \beta S(\nu),
    \]
    which yields
    \[
    \beta \leq \frac{-\bar \nu^L}{S(\nu) - \bar \nu^L} \leq \frac{-\bar \nu^L}{x - \bar \nu^L} = B_x.
    \]
    We have that $\nu_x(\{x\}) \ge \beta$ and therefore $\nu_x$ is simple $\beta$-biased.
    
    Finally, to see \ref{it:L.atomis.proof.3}, note that $\nu^R_x = B_x \delta_x$ and $\nu^L_x = \frac{1 - B_x}{\tm{\nu^L}} \nu^L$ for $x \in [0,S(\nu)]$, hence,
    \begin{align*}
        \int \nu_x^R \, \rho(dx) &= \int B_x \delta_x \, \rho(dx) = \int \delta_x \, \nu^R(dx) = \nu^R, \\
        \int \nu_x^L \, \rho(dx) &= \nu^L \int \frac{1-B_x}{\tm{\nu^L}} \, \frac{\nu^R(dx)}{B_x} = \nu^L \int \frac{x}{-\bar{\nu}^L\tm{\nu^L}} \, \nu^R(dx) = \nu^L.
    \end{align*}
    Thus, $\nu = \int \nu_x \, \rho(dx)$, proving \ref{it:L.atomis.proof.3} and thereby completing the proof.
\end{proof}

Our first goal is to characterize $\beta$-biased probabilities via testing against an appropriate class of functions. To this end, let
\[
\Psi := \{ f:\R \to \R : f(0)=0,\; f|_{\R_+}\ \text{convex},\; f\ \text{anti-symmetric, continuous} \}.
\]
For $\beta \in (0,1)$, let $a(\beta) = \frac{\beta}{1-\beta}$. For a function $f\colon \R \to \R$, we define the transformation $Z^\beta(f)\colon \R \to \R$ by 
$$
Z^\beta(f)(x) :=
\begin{cases}
    \frac{1}{a(\beta)} f(a(\beta)x), & \text{if } x > 0, \\
    f(x), & \text{if } x \leq 0.
\end{cases}
$$ 
Finally, we set $\Psi^\beta := \{Z^\beta(f) \mid f \in \Psi\}$.

\begin{figure}[H]
    \centering    \includegraphics[width=0.6\textwidth]{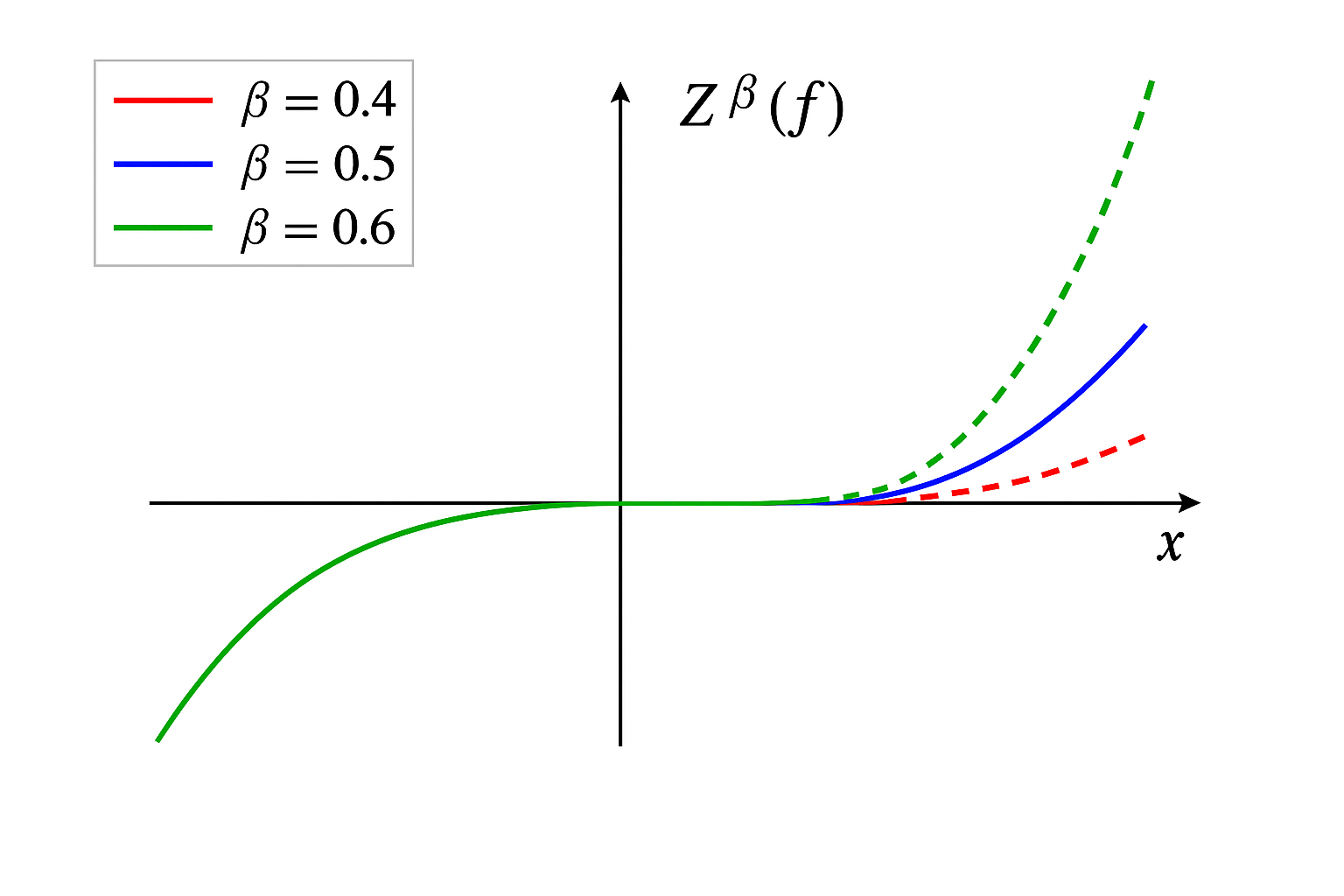}
    \caption{Graph of $f = Z^{1/2}(f) \in \Psi$ (blue) and its transformations $Z^\beta(f)$ for $\beta = 0.4 $ (red) and $\beta = 0.6$ (green).}
    \label{fig:Z_beta_transform}
\end{figure}

Note that, for any fixed $f \in \Psi$, the transform $Z^\beta(f)$ is increasing in $\beta$. Indeed, since $a(\cdot)$ is an increasing function and $f$ is convex on $\R_+$ with $f(0)=0$, it follows that, for any $\eta > \beta$ and $x \geq 0$, we have $f(a(\beta) x) \leq \frac{a(\beta)}{a(\eta)} f(a(\eta) x)$. Consequently, $Z^\beta(f) \leq Z^\eta(f)$.
Moreover, from the definition of \( \Psi^\beta \), it follows that its elements are antisymmetric up to a stretch and scaling of the positive axis. Specifically, for any $f \in \Psi^\beta$, we observe that 
$$
    f(x) = 
        \begin{cases}
            -\frac{1}{a(\beta)}\, f\big(R^\beta(x)\big) & x \geq 0 \\
            -a(\beta) \, f\big(R^\beta(x)\big) & x \leq 0
        \end{cases},
$$
where the distorted reflection function \( R^\beta(x) \) 
is defined as:
\begin{equation}\label{eq:Stratching Function}
     R^\beta(x) :=  \begin{cases}
        -a(\beta) x, & x \geq 0 \\
        - \frac{1}{a(\beta)} x, & x \leq 0.
    \end{cases} 
\end{equation}
For \( \beta = 1/2 \) (thus $a(\beta) = 1$), we have that $Z^{1/2}(f)=f$, so $\Psi^{1/2}=\Psi$ and $R^{1/2}(x)=-x$.

\smallskip

For convenience, for the rest of this section we fix parameters $\beta \in (0,1)$ and $ a= \frac{\beta}{1-\beta}$.

\begin{lemma}\label{lem:betabiased_test_functions}
    Let $\nu \in \Pc_1(\R)$. Then $\nu$ is $\beta$-biased if and only if $\nu(f) \leq 0$ for all $f \in \Psi^\beta$.
\end{lemma}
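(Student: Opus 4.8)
The plan is to reshape $\nu$ through the map $Z^\beta$ so that testing against $\Psi^\beta$ becomes a comparison of two auxiliary measures on $\R_+$ against convex functions, and then to treat the two implications separately: the forward one by reducing to simple $\beta$-biased measures and one Jensen estimate, the converse by recognising a convex-order domination and applying the classical Strassen theorem. Throughout set $a=\beta/(1-\beta)$. Since $Z^\beta$ fixes affine antisymmetric functions, $\pm\mathrm{id}\in\Psi^\beta$, so the hypothesis $\nu(f)\le 0$ for all $f\in\Psi^\beta$ forces $\bar\nu=0$; we may thus assume $\nu$ centred at $0$, so that $\nu^L=\nu|_{(-\infty,0)}$ and $\nu^R=\nu|_{[0,\infty)}$. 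Using $g(0)=0$ and the antisymmetry of $g$, one computes that for every $g\in\Psi$,
\[
\nu\big(Z^\beta(g)\big)=\int h\,d(\alpha-\gamma),\qquad h:=g|_{\R_+},
\]
where $\alpha:=\tfrac1a\,(D_a)_{\#}\nu^R$ with $D_a(x)=ax$, and $\gamma$ is the image of $\nu^L$ under $x\mapsto -x$; both are finite measures on $[0,\infty)$, and $\bar\nu=0$ amounts to $\int x\,d\alpha=\int x\,d\gamma$.

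For the forward implication, by Lemma~\ref{lem:atomis_as_a_mixture_of_simple} and linearity it suffices to check $\nu(f)\le 0$ when $\nu$ is simple $\beta$-biased, say $\nu=B\delta_p+(1-B)\tilde\nu$ with $p\ge 0$, $B\ge\beta$, $\tilde\nu$ a probability on $(-\infty,0)$, and $\bar{\tilde\nu}=-Bp/(1-B)$ by centring (the case $p=0$, where $\nu=\delta_0$, being trivial). For $f=Z^\beta(g)\in\Psi^\beta$, using $f(p)=\tfrac1a g(ap)$ and, for the part on $(-\infty,0)$, the antisymmetry of $g$ followed by one application of Jensen's inequality to the convex $g$, the inequality $\nu(f)\le 0$ reduces to
\[
\tfrac1a\,g(ap)\ \le\ \tfrac{1-B}{B}\,g\!\left(\tfrac{Bp}{1-B}\right).
\]
Since $B\ge\beta$ is precisely $\tfrac{B}{1-B}\ge a$, and since $t\mapsto g(t)/t$ is nondecreasing on $(0,\infty)$ for convex $g$ with $g(0)=0$, this holds.

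For the converse we are given $\int h\,d(\alpha-\gamma)\le 0$ for all $h$ of the form $g|_{\R_+}$ with $g\in\Psi$, that is, for every continuous, linearly bounded, convex $h:\R_+\to\R$ with $h(0)=0$. Testing against $h=\pm\mathrm{id}$ and $h(x)=(x-s)_+$ ($s\ge 0$) gives $\int x\,d\alpha=\int x\,d\gamma$ and $\int(x-s)_+\,d\alpha\le\int(x-s)_+\,d\gamma$ for all $s\ge 0$. Comparing the right derivatives at $s=0$ of these two potential functions, which agree there, yields $\tm\alpha\ge\tm\gamma$; put $c:=\tm\alpha-\tm\gamma\ge 0$ and $\gamma':=\gamma+c\,\delta_0$, so that $\gamma'$ has the same total mass and barycentre as $\alpha$, and (adding an atom at $0$ leaves $\int(x-s)_+$ unchanged for $s\ge0$) $\int(x-s)_+\,d\alpha\le\int(x-s)_+\,d\gamma'$ for all $s\in\R$, i.e.\ $\alpha\prec\gamma'$. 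By Strassen's theorem there is a measurable martingale kernel $(\kappa_y)_y$, supported on $[0,\infty)$, with $\bar\kappa_y=y$ and $\gamma'=\int\kappa_y\,\alpha(dy)$; since $\alpha=\tfrac1a(D_a)_{\#}\nu^R$, the kernel $\lambda_x:=\kappa_{ax}$ satisfies $\bar\lambda_x=ax$ and $\gamma'=\tfrac1a\int\lambda_x\,\nu^R(dx)$. One then sets, for $\nu^R$-a.e.\ $x$,
\[
B_x:=\frac{a}{a+1-\lambda_x(\{0\})}\in[\beta,1],\qquad
\nu_x:=B_x\,\delta_x+\frac{B_x}{a}\,\big(\text{image of }\lambda_x|_{(0,\infty)}\text{ under }u\mapsto -u\big),\qquad
\rho(dx):=\frac{1}{B_x}\,\nu^R(dx),
\]
and verifies: each $\nu_x$ is a probability centred at $0$ (centring is automatic from $\bar\lambda_x=ax$) whose restriction to $[0,\infty)$ is the single atom $B_x\delta_x$ of mass $B_x\ge\beta$, hence simple $\beta$-biased; $\rho$ is a probability measure (here using $\int\lambda_x(\{0\})\,\nu^R(dx)=a\,\gamma'(\{0\})=ac$ together with $\tm\alpha=\tfrac1a\tm{\nu^R}$, $\tm\gamma=\tm{\nu^L}$); and $\nu=\int\nu_x\,\rho(dx)$, upon splitting the integral into its parts on $[0,\infty)$ and on $(-\infty,0)$ and using $\gamma'|_{(0,\infty)}=\gamma$. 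By Lemma~\ref{lem:atomis_as_a_mixture_of_simple}, $\nu$ is $\beta$-biased.

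The reductions to simple $\beta$-biased measures, the $Z^\beta$/reflection identity and the Jensen estimates are routine bookkeeping. The main obstacle is the converse: recognising that, after the reshaping, the hypothesis (through the hinge functions $x\mapsto(x-s)_+$) is exactly the convex-order domination $\alpha\prec\gamma'$ once $\gamma$ is augmented by the precise atom $c\,\delta_0$, so that Strassen applies, and then checking that the candidate pair $(\nu_x,\rho)$ built from the resulting martingale kernel has exactly the right total masses and barycentres. These identities come out right precisely because $a=\beta/(1-\beta)$, and checking them, together with the measurable selection needed to make the mixture genuine (cf.\ Remark~\ref{rem:meausurability_of_decomposition}), is where the actual work is.
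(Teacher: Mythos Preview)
Your proof is correct and follows essentially the same route as the paper's. Both directions match: forward by reduction to simple $\beta$-biased measures plus a Jensen estimate, converse by recognising a convex-order domination (your $\alpha\prec\gamma'$ is the paper's $\nu^R\prec aR^\beta_\#\nu^L+\alpha\delta_0$ after rescaling by $D_a^{-1}$), invoking Strassen, and reading off the simple-$\beta$-biased mixture from the martingale kernel. The only cosmetic differences are that you handle all atom masses $B\ge\beta$ at once via monotonicity of $t\mapsto g(t)/t$ (the paper splits into $B=\beta$ then uses monotonicity of $Z^\beta$ in $\beta$), and you obtain $\tm\alpha\ge\tm\gamma$ from the right derivative of the potential functions rather than the paper's explicit test functions $\phi_\varepsilon$.
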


\begin{proof} 
    We first show the necessary condition. We start by considering a simple $\beta$-biased probability $\nu$ of the form $\nu = \nu^L + \beta \delta_{S(\nu)}$. For any $\phi \in \Psi$, due to Jensen's inequality, concavity of $\phi$ on $(-\infty,0]$, antisymmetry of $\phi$, and the fact that $(1-\beta) \bar \nu^L + \beta S(\nu) = 0$, we have that 
    $$
    \nu\big(Z^\beta (\phi)\big) = \nu^L(\phi) + (1-\beta)\phi(aS(\nu)) \leq - (1 - \beta) \phi(- \bar \nu^L) + (1-\beta) \phi(a S(\nu) ) = 0.
    $$
    Since $\phi \in \Psi$ was  arbitrarily, we conclude that $    \nu(f) \leq 0
    $
    for all $f \in \Psi^\beta$.
    
Next, if $\nu$ is a simple $\beta$-biased probability of the form $\nu = \nu^L + \gamma \delta_{S(\nu)}$ for some $\gamma \geq \beta$, then in the same way as above we can see that, for any $\phi \in \Psi$, we have $\nu\big(Z^\gamma (\phi)\big) \leq 0$. Since $Z^\beta (\phi) \leq Z^\gamma (\phi)$, we obtain that $\nu\big( Z^\beta (\phi) \big) \leq 0.$ 

Finally, let $\nu$ be any $\beta$-biased probability. Then, by Lemma \ref{lem:atomis_as_a_mixture_of_simple}, $\nu$ can be represented as a mixture of simple $\beta$-biased probabilities $\nu = \int \nu_x \, \rho(dx)$. For $\rho$-a.e. $x$, we have $\nu_x(f) \leq 0$ for all $f \in \Psi^\beta$. Therefore, $\nu(f) \leq 0$ for all $f \in \Psi^\beta$. 

    We are now going to show the reverse implication. For this, assume that $\nu(f) \leq 0$ for all $f \in \Psi^\beta$, or equivalently, 
    \begin{equation}\label{eq:1}
        \nu^R(f) \leq \nu^L(-f). 
    \end{equation}
    Our goal is to explicitly represent $\nu$ as a mixture of simple $\beta$-biased probabilities. We proceed in two steps. First, we show that \eqref{eq:1} ensures the existence of a martingale coupling between $\frac{\nu^R}{\tm{\nu^R}}$ and $\frac{a R^\beta_{\#} \nu^L + \alpha \delta_0}{\tm{\nu^R}}$ with some $\alpha \geq 0$. Then, we construct the required representation using the kernel corresponding this martingale coupling.

    Fix $f \in \Psi^\beta$, and consider $x \leq 0$. Then, by  definition of $\Psi^\beta$, we have $f(x) = -a f(R^\beta(x))$. Substituting this into \eqref{eq:1}, we get 
    $$
    \nu^R(f) \leq \int a f(R^\beta(x)) \, \nu^L(dx) = a \int f(x) \, R^\beta_{\#}\nu^L(dx) = a R^\beta_{\#}\nu^L(f).
    $$    
    Since the set of functions in $\Psi^\beta$ restricted to $\mathbb{R}_+$ is equal to the set of convex functions $\phi$ on $\mathbb{R}_+$ such that $\phi(0) = 0$, inequality \eqref{eq:1} can be rewritten as
    \begin{equation}\label{eq:convex order ineq}
        \nu^R(\phi) \leq a R^\beta_{\#}\nu^L(\phi) 
    \end{equation}
    for all convex functions $\phi$ on $\mathbb{R}_+$ such that $\phi(0)=0$. 
    Testing \eqref{eq:convex order ineq} on the functions 
    $$
            \phi_\varepsilon(x) := 
    \begin{cases} 
    -1, & x\geq \varepsilon \\
    -\frac{x}{\varepsilon}, & x \in [0, \varepsilon]
    \end{cases},
    $$
    we find that $\tm{\nu^R} \geq a \tm{\nu^L}$. Indeed, using that $\frac{x}{\varepsilon} \leq 1$ for  $x \in [0,\varepsilon]$ and applying \eqref{eq:convex order ineq} with $\phi = \phi_\varepsilon$, we obtain
    \begin{align*}
    - \tm{\nu^R} &\leq \int_{[0,\varepsilon)} -\frac{x}{\varepsilon} \nu^R(dx) - \nu^R([\varepsilon, +\infty)) \\  
    &\leq a \int_{[0,\varepsilon)} -\frac{x}{\varepsilon} \, R^\beta_{\#}\nu^L(dx) - a R^\beta_{\#}\nu^L([\varepsilon, +\infty)) \leq - a R^\beta_{\#}\nu^L([\varepsilon, +\infty)).
    \end{align*}
    Letting $\varepsilon \to 0$, we obtain $a\tm{\nu^L} \leq \tm{\nu^R}$. Thus, \eqref{eq:convex order ineq} implies that 
    \begin{equation}\label{eq:nuR leqC nuL}
        \nu^R \prec a R^\beta_{\#} \nu^L + \alpha \delta_0,
    \end{equation}
    where $\alpha := \tm{\nu^R} - a\tm{\nu^L} \geq 0$. 
    Hence, by \cite[Theorem 8]{St65}, there exists a coupling
    $$
    \pi \in \cpls \left(\frac{\nu^R}{\tm{\nu^R}}, \frac{aR^\beta_{\#} \nu^L + \alpha \delta_0}{\tm{\nu^R}} \right)
    $$
    such that $\bar{\pi}_x = x$ $\nu^R$-a.e., where $(\pi_x)_x$ denotes the disintegration of $\pi$ with respect to the first marginal.
    This leads to
    $$
    aR^\beta_{\#} \nu^L + \alpha \delta_0 = \int \pi_x \, \nu^R(dx),
    $$
    implying, using that $R^\beta$ is an involution,
    $$
    \nu^L = \frac{1}{a}\int  R^\beta_{\#} \pi_x \, \nu^R(dx) - \frac{\alpha}{a} \delta_0,
    $$
    and comparing mass at $0$ on both sides yields
    $\alpha = \int \pi_x(\{0\}) \, \nu^R(dx)$.
    Since $\nu^R = \int \delta_x\,  \nu^R(dx)$, we obtain
\begin{equation}\label{eq:Decomposition_nu_v1}
    \nu =  \nu^L + \nu^R = \int \left(\frac{1}{a} (R^\beta_{\#}  \pi_x  - \pi_x(\{0\})\delta_0) + \delta_x \right) \nu^R(dx).
\end{equation}  
    Denoting $\beta_x := \frac{1}{a} + 1 - \frac{1}{a} \pi_x(\{0\})$ and setting
    \begin{align}\label{eq:Decomposition_nu_v2+}
    \nu_x := \frac{\frac{1}{a} (R^\beta_{\#}  \pi_x  - \pi_x(\{0\})\delta_0) + \delta_x}{\beta_x}, \quad \mu(dx) := \beta_x \,\nu^R(dx),
\end{align}
we can rewrite \eqref{eq:Decomposition_nu_v1} as
\begin{equation}\label{eq:Decomposition_nu_v2}
    \nu = \int \nu_x \, \mu(dx).
\end{equation}   

    The final step consists in showing that $(\nu_x)_x$ in \eqref{eq:Decomposition_nu_v2+} are simple $\beta$-biased probabilities and that $\mu$ is a probability measure.
    Since $\alpha = \tm{\nu^R} - a\tm{\nu^L} = \tm{\nu^R} - a(1-\tm{\nu^R})$, 
    $$
    \int \beta_x \, \nu^R(dx) = (1/a + 1)\tm{\nu^R} - \frac{\alpha}{a} = 1,
    $$ 
    and hence $\mu$ is a probability measure.     
    From the definition of $\beta_x$, it follows that $\tm{\nu_x} = 1$, and since the barycenter of $R^\beta_{\#}\pi_x$ is $-ax$, $\bar{\nu}_x = 0$, so $\nu_x$ are probability measures centered at $0$. Notice that, for $\nu^R$-a.e.\ $x$, the support of $\pi_x$ is concentrated on $\mathbb{R}_+$, hence the support of $R^\beta_{\#}  \pi_x$ is concentrated on $\mathbb{R}_-$. Hence, $\nu_x|_{[0,\infty)} = \frac{1}{\beta_x} \delta_x$ is a single atom with mass 
    \begin{equation}\label{eq:atoms_in_decomposition}
          \frac{1}{\beta_x} = \frac{1}{1/a + 1 - \pi_x(\{0\})/a} \geq \frac{1}{1/a + 1} = \beta.
    \end{equation}
    Thus, $(\nu_x)_x$ are simple $\beta$-biased probabilities and $\nu$ can be represented as in \eqref{eq:Decomposition_nu_v2}, implying that $\nu$ is $\beta$-biased.
\end{proof}

The following corollary establishes a connection between convex order and the concept of $\beta$-biased probabilities. It shows that, for $\nu \in \Pc_1(\R)$, the property of being $\beta$-biased is equivalent to the structural condition that $\nu^R$ is in convex order with a certain distorted reflection of $\nu^L$. 

\begin{corollary}\label{cor:convexbiased}
Let $\nu \in \Pc_1(\R)$. The following are equivalent:
\begin{enumerate}
    \item $\nu$ is $\beta$-biased;
    \item $\nu(f) \leq 0$ for all $f \in \Psi^\beta \cap C_L(\R)$;
    \item $\nu^R \prec a R^\beta_{\#} \nu^L + \alpha \delta_0$, where $\alpha = \tm{\nu^R} - a\tm{\nu^L} \geq 0$.
\end{enumerate}
\end{corollary}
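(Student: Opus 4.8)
The plan is to establish the cycle $(1)\Rightarrow(2)\Rightarrow(3)\Rightarrow(1)$; two of the three links come almost for free. The implication $(1)\Rightarrow(2)$ is immediate from Lemma~\ref{lem:betabiased_test_functions}, since $\Psi^\beta\cap C_L(\R)\subseteq\Psi^\beta$. The implication $(3)\Rightarrow(1)$ is nothing but the second half of the proof of Lemma~\ref{lem:betabiased_test_functions}: there, relation \eqref{eq:nuR leqC nuL} is precisely statement $(3)$, and from it one obtains, via \cite[Theorem~8]{St65} and the explicit decomposition \eqref{eq:Decomposition_nu_v2}, that $\nu$ is an average of simple $\beta$-biased probabilities, hence $\beta$-biased. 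So the real content is $(2)\Rightarrow(3)$.

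For $(2)\Rightarrow(3)$ I would argue as follows. First, the functions $x\mapsto x$ and $x\mapsto -x$ both belong to $\Psi^\beta\cap C_L(\R)$ (their restrictions to $\R_+$ are convex and vanish at $0$, and the antisymmetric-up-to-scaling extension of each is again $\pm x$), so $(2)$ forces $\int x\,d\nu=0$; thus $\bar\nu=0$ and $\nu^L=\nu|_{(-\infty,0)}$, $\nu^R=\nu|_{[0,\infty)}$, exactly as set up in the proof of Lemma~\ref{lem:betabiased_test_functions}. The point is now that all the test functions genuinely needed in the reverse implication of that proof are linearly bounded: the functions $\phi_\varepsilon$ used to derive $\tm{\nu^R}\geq a\tm{\nu^L}$ (i.e.\ $\alpha:=\tm{\nu^R}-a\tm{\nu^L}\geq0$) are bounded, and for the convex order it suffices to test against the call functions $x\mapsto(x-k)_+$, $k\geq0$. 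Indeed, for each $k\geq0$ let $\phi_k\in\Psi$ be the (unique) antisymmetric, $\R_+$-convex function with $\phi_k|_{\R_+}(y)=(y-ak)_+$; then $g_k:=Z^\beta(\phi_k)\in\Psi^\beta\cap C_L(\R)$ and $g_k|_{\R_+}(x)=(x-k)_+$, and evaluating $(2)$ at $g_k$ and unwinding the change of variables under $R^\beta$ exactly as in the derivation of \eqref{eq:convex order ineq} yields
\[
\int(x-k)_+\,\nu^R(dx)\;\leq\;a\int(y-k)_+\,R^\beta_{\#}\nu^L(dy)\;=\;\int(y-k)_+\,d\big(aR^\beta_{\#}\nu^L+\alpha\delta_0\big),\qquad k\geq0.
\]

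To conclude, I would invoke the classical fact that two finite measures on $\R$ with finite first moment are in convex order if and only if they have the same total mass, the same barycenter, and their call functions are ordered at every strike. Here $\nu^R$ and $aR^\beta_{\#}\nu^L+\alpha\delta_0$ are both supported on $[0,\infty)$ and, using $\bar\nu=0$ and the definition of $\alpha$, have equal total mass $\tm{\nu^R}$ and equal barycenter $-\int x\,\nu^L(dx)$; for strikes $k<0$ the call functions are affine on $[0,\infty)$, so the inequality is automatic there, and for $k\geq0$ it is the display above. Hence $\nu^R\prec aR^\beta_{\#}\nu^L+\alpha\delta_0$, which is $(3)$.

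The step I expect to require the most care is exactly this $(2)\Rightarrow(3)$: one must verify that shrinking the test class from $\Psi^\beta$ to $\Psi^\beta\cap C_L(\R)$ loses nothing, i.e.\ that the ``extremal'' test functions controlling $\alpha\geq0$ and the convex order (the $\phi_\varepsilon$'s and the call functions) can all be realized inside $C_L(\R)$. Everything else — the membership $g_k\in\Psi^\beta$, the $R^\beta$-change of variables, and the mass/barycenter bookkeeping — is routine and already implicit in the proof of Lemma~\ref{lem:betabiased_test_functions}.
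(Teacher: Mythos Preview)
Your proof is correct and follows the same cycle $(1)\Rightarrow(2)\Rightarrow(3)\Rightarrow(1)$ as the paper, with only cosmetic differences: for $(2)\Rightarrow(3)$ the paper tests against puts $\phi_k(x)=(k-x)_+$ rather than your calls, and for $(3)\Rightarrow(1)$ it gives a short direct verification that $(3)$ forces $\nu(f)\le 0$ for every $f\in\Psi^\beta$ (then invokes Lemma~\ref{lem:betabiased_test_functions}) instead of citing the Strassen decomposition from that lemma's proof.
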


\begin{proof}
(1) $\Rightarrow$ (2): If $\nu$ is $\beta$-biased, $\nu(f) \leq 0$ for all $f \in \Psi^\beta$ by Lemma~\ref{lem:betabiased_test_functions}, hence also for all $f \in \Psi^\beta \cap C_L(\R)$.

(2) $\Rightarrow$ (3): Assume that $\nu(f) \leq 0$ for all $f \in \Psi^\beta \cap C_L(\R)$. Then, as in the proof of Lemma~\ref{lem:betabiased_test_functions}, we obtain
\begin{equation}\label{eq:betabiased_test_functions_linear}
    \nu^R(\phi) \leq a R^\beta_{\#} \nu^L(\phi)  
\end{equation}
for all convex functions $\phi$ on $\R_+$ such that $\phi(0) = 0$ and $\phi \in C_L(\R)$, as well as $\tm{\nu^R} \geq a |\tm{\nu^L}|$. Testing \eqref{eq:betabiased_test_functions_linear} on functions $\phi_k(x) := (k - x)_+$ for $k\in\R$, we deduce that
$$
\nu^R \prec a R^\beta_{\#} \nu^L + \alpha \delta_0,
$$
where $\alpha = \tm{\nu^R} - a \tm{\nu^L} \geq 0$.

(3) $\Rightarrow$ (1): Assume $\nu^R \prec a R^\beta_{\#} \nu^L + \alpha \delta_0$ with $\alpha \geq 0$. For any $f \in \Psi_\beta$, by definition, there exists a convex function $\phi$ on $\mathbb{R}_+$ such that 
$$
f(x) = 
\begin{cases}
    \frac{1}{a} \phi(ax), & x \geq 0, \\
    -\phi(-x), & x \leq 0.
\end{cases}
$$
Now, define $\psi(x) := \frac{1}{a} \phi(ax)$. From the assumption it follows that $\nu^R(\psi) \leq a R^\beta_{\#} \nu^L(\psi)$, which implies that $\nu(f) \leq 0$. Since $f \in \Psi_\beta$ was arbitrary, Lemma~\ref{lem:betabiased_test_functions} ensures that $\nu$ is $\beta$-biased.
\end{proof}

So far we have focused on the case where $\nu$ is centered in $0$. In order to consider the case of general barycenters, we introduce the following definition: 
\begin{definition}\label{Def: beta biased around x}
Consider the translations $T_x(y):=y+x$, $x \in \R$. A measure $\nu$ is called \emph{$\beta$-biased around $x$} if it is obtained as a shift of a $\beta$-biased measure, that is, if $\nu = T_{x \#} \nu_0$ for some $\beta$-biased $\nu_0$. We also set $\Psi_x := \{f \circ T_{-x} : f \in \Psi\}$ and $\Psi_x^\beta := \{f \circ T_{-x} : f \in \Psi^\beta\}$.
\end{definition}

By analogy with \eqref{eq:Stratching Function}, we define the following mapping: 
\begin{equation}\label{eq:R function 2}
    R^{\beta,x}(y) := 
    \begin{cases}
        x - \frac{y-x}{a(\beta)}, & \text{if } y \leq x,\\
        x - a(\beta)(y-x), & \text{if } y \geq x,
    \end{cases}
\end{equation}
so that $R^{\beta,0}=R^\beta$.
Lemma~\ref{lem:betabiased_test_functions} and Corollary \ref{cor:convexbiased} then extend to the case of general barycenters in the obvious way: 

\begin{corollary}\label{cor:Characterization_biased_around_x}
Let $\nu \in \Pc_1(\R)$. The following are equivalent:
\begin{enumerate}
    \item $\nu$ is $\beta$-biased around $x$;
    \item $\nu(f)\leq 0$ for all $f\in \Psi_x^\beta$;
    \item $\nu^R \prec a R^{\beta,x}_{\#} \nu^L + \alpha \delta_x,$ with $\alpha = \tm{\nu^R} - a\tm{\nu^L} \geq 0$.
\end{enumerate}
\end{corollary}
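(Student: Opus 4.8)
The plan is to reduce the statement to its already-established centered counterparts, Lemma~\ref{lem:betabiased_test_functions} and Corollary~\ref{cor:convexbiased}, by conjugating everything with the translation $T_x$. Concretely, I would set $\nu_0 := T_{-x\#}\nu$, so that $\nu = T_{x\#}\nu_0$; by Definition~\ref{Def: beta biased around x}, assertion (1)---that $\nu$ is $\beta$-biased around $x$---is \emph{by definition} equivalent to $\nu_0$ being $\beta$-biased. It then remains to check that assertions (2) and (3), read for $\nu$, coincide with the corresponding assertions of Lemma~\ref{lem:betabiased_test_functions} and Corollary~\ref{cor:convexbiased}, read for $\nu_0$.

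For (2): since $T_x$ is a bijection of $\R$ with inverse $T_{-x}$, every $f \in \Psi_x^\beta$ has the form $f = g \circ T_{-x}$ with $g \in \Psi^\beta$, and the change-of-variables formula for push-forwards gives $\nu(f) = \int (g\circ T_{-x})\, d\nu = \int g\, d(T_{-x\#}\nu) = \nu_0(g)$; moreover $g$ ranges over all of $\Psi^\beta$ as $f$ ranges over $\Psi_x^\beta$. Hence ``$\nu(f)\le 0$ for all $f\in\Psi_x^\beta$'' is the same statement as ``$\nu_0(g)\le 0$ for all $g\in\Psi^\beta$'', which by Lemma~\ref{lem:betabiased_test_functions} is equivalent to $\nu_0$ being $\beta$-biased, i.e.\ to (1). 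For (3): I would first record that $\bar\nu_0 = \bar\nu - x$ and that translation commutes with restriction to the relevant half-lines, so that $\nu_0^L = T_{-x\#}\nu^L$ and $\nu_0^R = T_{-x\#}\nu^R$; in particular total masses are preserved, so $\alpha = \tm{\nu^R} - a\tm{\nu^L}$ equals $\tm{\nu_0^R} - a\tm{\nu_0^L}$. Comparing \eqref{eq:Stratching Function} with \eqref{eq:R function 2} one reads off $R^{\beta,x} = T_x \circ R^\beta \circ T_{-x}$, whence $R^{\beta,x}_{\#}\nu^L = T_{x\#}\big(R^\beta_{\#}\nu_0^L\big)$ and $\alpha\delta_x = T_{x\#}(\alpha\delta_0)$. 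Since $\eta \mapsto T_{x\#}\eta$ preserves the convex order between finite measures (because $\phi$ is convex if and only if $\phi\circ T_x$ is), applying it to $\nu_0^R$ and to $aR^\beta_{\#}\nu_0^L + \alpha\delta_0$ shows that (3) for $\nu$ is equivalent to ``$\nu_0^R \prec aR^\beta_{\#}\nu_0^L + \alpha\delta_0$'', which by Corollary~\ref{cor:convexbiased} is again equivalent to (1).

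I do not expect a genuine obstacle here: the whole argument is a conjugation by $T_x$, and the facts it uses---the push-forward change of variables, the intertwining $R^{\beta,x} = T_x\circ R^\beta\circ T_{-x}$, and translation-invariance of the convex order and of total mass---are all elementary. The one point that deserves a moment's care is that $\nu^L,\nu^R$ are defined relative to the barycenter $\bar\nu$ rather than relative to $x$; this is harmless because $\bar\nu_0 = \bar\nu - x$, so the splitting $\nu = \nu^L + \nu^R$ is carried by $T_{-x}$ precisely onto $\nu_0 = \nu_0^L + \nu_0^R$. If one wishes, one can also note in passing that each of (1), (2), (3) forces $\bar\nu = x$---for (1) because $\beta$-biased measures are centered at $0$, for (2) by testing the functions $y\mapsto \pm(y-x)\in\Psi_x^\beta$, and for (3) by comparing barycenters---but this observation is not needed for the equivalence.
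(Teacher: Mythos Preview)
Your proposal is correct and matches the paper's approach exactly: the paper does not give a separate proof but simply remarks that Lemma~\ref{lem:betabiased_test_functions} and Corollary~\ref{cor:convexbiased} ``extend to the case of general barycenters in the obvious way,'' and what you have written is precisely that obvious extension, namely conjugation by the translation~$T_x$. Your verification of the intertwining $R^{\beta,x}=T_x\circ R^\beta\circ T_{-x}$, the identity $\nu_0^{L,R}=T_{-x\#}\nu^{L,R}$ via $\bar\nu_0=\bar\nu-x$, and translation invariance of~$\prec$ are exactly the points one needs.
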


\begin{remark}\label{rem:symm}
The last corollary implies that any $\nu \in \mathcal{P}_1(\mathbb{R})$ that is symmetric with respect to its barycenter $x = \bar{\nu}$, in the sense that
$
\nu^R = R^{1/2,x}_{\#} \nu^L,
$
is $1/2$-biased, but not $\beta$-biased for any $\beta > 1/2$, since $\tm{\nu^R} - a(\beta)\tm{\nu^L} < 0$ for $\beta > 1/2$.
\end{remark}

\subsection{Biased order and biased martingale couplings}\label{subsec:biased-strassen}
In this section we study the $\beta$-biased order and $\beta$-biased couplings, and present the announced Strassen-type result.
\begin{definition}\label{def:bbc-coupling}
Let $\mu, \nu \in \Pc_1(\R)$ and $\beta\in[0,1)$. The set of $\beta$-biased martingale couplings 
of $\mu$ and $\nu$, denoted by $\cplbbc(\mu, \nu)$, consists of all couplings $\pi$ of $\mu$ and $\nu$ such that the $\mu$-disintegration $(\pi_x)_x$ of $\pi$ satisfies $\mu$-a.s. that $\pi_x$ is $\beta$-biased around $x$. 
\end{definition}

Note that $\cpls_0(\mu, \nu)$ coincides with the set of martingale couplings between $\mu$ and $\nu$. 
We first prove that couplings in $\cplbbc(\mu, \nu)$ correspond to laws of  $\beta$-biased martingales defined in the introduction.

\begin{lemma}\label{lem:beta_biased_martingale}
    Let $\mu,\nu \in \Pc_1(\R)$,  $\pi\in \cpls(\mu,\nu)$, and $\beta \in (0,1)$.  Then $\pi \in \cplbbc(\mu,\nu)$ if and only if there exists a $\beta$-biased martingale $(X_t)_{t=0,1}$ defined on some stochastic basis $(\Omega,\mathcal{F},\P,(\F_t)_{t=0,1})$ with $\law(X_0,X_1) = \pi$.
\end{lemma}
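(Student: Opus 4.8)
The plan is to prove the two implications separately. For the implication $\pi\in\cplbbc(\mu,\nu)\Rightarrow$ existence of a $\beta$-biased martingale with law $\pi$, I would build an explicit randomized stochastic basis. By definition the $\mu$-disintegration satisfies $\mu$-a.s.\ that $\pi_x$ is $\beta$-biased around $x$, i.e.\ $T_{-x\#}\pi_x$ is $\beta$-biased; hence, by Lemma~\ref{lem:atomis_as_a_mixture_of_simple} together with the measurability assertion of Remark~\ref{rem:meausurability_of_decomposition}, there is a jointly Borel family $(q_{x,u})_{(x,u)\in\R\times[0,1]}$ of simple (hence atomic) $\beta$-biased probabilities with $T_{-x\#}\pi_x=\int q_{x,u}\,\lambda(du)$. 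Put $\pi_{x,u}:=T_{x\#}q_{x,u}$, an atomic $\beta$-biased probability around $x$, and take $\Omega:=\R\times[0,1]\times\R$ with coordinates $X_0(x,u,y):=x$, $U(x,u,y):=u$, $X_1(x,u,y):=y$, filtration $\F_0:=\sigma(X_0,U)\subseteq\F_1:=\B(\Omega)$, and $\P(dx,du,dy):=\mu(dx)\,\lambda(du)\,\pi_{x,u}(dy)$. Then $\law(X_1|\F_0)=\pi_{X_0,U}$, which has barycenter $X_0$ (so $(X_t)$ is a martingale) and an atom of mass $\geq\beta$ at $S(\pi_{X_0,U})<\infty$ (so $(X_t)$ is $\beta$-biased), while $\int\pi_{x,u}\,\lambda(du)=\pi_x$ gives $\law(X_0,X_1)=\pi$. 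Beyond invoking Remark~\ref{rem:meausurability_of_decomposition}, this direction is routine bookkeeping.

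For the converse, let $(X_t)_{t=0,1}$ be a $\beta$-biased martingale with $\law(X_0,X_1)=\pi$ and let $\rho:=\law(X_1|\F_0)$ be a regular conditional distribution. The martingale property gives $\bar\rho=X_0$ $\P$-a.s., and since $\beta>0$ the convention on $\max\supp$ forces $S(\rho)<\infty$ and $\rho(\{S(\rho)\})\geq\beta$ $\P$-a.s.; hence $\rho$ is atomic $\beta$-biased around $X_0$, so Corollary~\ref{cor:Characterization_biased_around_x} yields $\rho(f)\leq0$ $\P$-a.s.\ for every $f\in\Psi^\beta_{X_0}\cap C_L(\R)$. Since $\sigma(X_0)\subseteq\F_0$ and the disintegration $(\pi_x)_x$ is a version of $\law(X_1|\sigma(X_0))=\E[\rho|\sigma(X_0)]$, for each fixed $f\in\Psi^\beta\cap C_L(\R)$ the function $f\circ T_{-X_0}$ lies in $\Psi^\beta_{X_0}$, and the measure-valued tower property gives $\pi_x(f\circ T_{-x})=\E[\rho(f\circ T_{-X_0})|X_0=x]\leq0$ for $\mu$-a.e.\ $x$, say for $x\notin N_f$ with $N_f$ a $\mu$-null set.

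To conclude that $\pi_x$ is $\beta$-biased around $x$ for $\mu$-a.e.\ $x$, I would replace $\Psi^\beta$ by a countable determining subfamily $D$ and set $N:=\bigcup_{f\in D}N_f$. Such a $D$ exists because, by Corollary~\ref{cor:convexbiased} and its extension to general barycenters (Corollary~\ref{cor:Characterization_biased_around_x}), ``$\nu$ is $\beta$-biased around $x$'' is equivalent to the conjunction of $\bar\nu=x$, $\tm{\nu^R}\geq a\tm{\nu^L}$, and the convex order $\nu^R\prec aR^{\beta,x}_\#\nu^L+\alpha\delta_x$, and each of these can be tested against countably many explicit piecewise-linear functions of $\Psi^\beta_x\cap C_L(\R)$: put-type payoffs $y\mapsto(k-y)_+$ with $k\in\mathbb{Q}$ for the convex order, $\pm\mathrm{id}$ for the barycenter, and a sequence $\phi_{1/n}$ as in the proof of Lemma~\ref{lem:betabiased_test_functions} for the mass inequality. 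For $x\notin N$ all these inequalities hold simultaneously, so $\pi_x$ is $\beta$-biased around $x$, i.e.\ $\pi\in\cplbbc(\mu,\nu)$.

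The main obstacle is this last argument. The hypothesis only controls $\law(X_1|\F_0)$, where $\F_0$ may be strictly larger than $\sigma(X_0)$, so the biased structure of $\pi_{X_0}=\E[\rho|\sigma(X_0)]$ has to be recovered by averaging; that averaging preserves the biased property is immediate from the linearity of $\nu\mapsto\nu(f)$ in Corollary~\ref{cor:Characterization_biased_around_x}, but turning this into a statement valid outside a \emph{single} $\mu$-null set is what forces the reduction to a countable determining family of test functions and is the only genuinely delicate point; the construction direction presents no real difficulty.
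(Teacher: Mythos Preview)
Your proposal is correct, and the forward direction is essentially identical to the paper's construction (same product space $\mu\otimes\lambda\otimes\pi_{x,u}$, same filtration $\F_0=\sigma(X_0,U)$).

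For the converse you take a longer route than the paper. You pass through the test-function characterization of Corollary~\ref{cor:Characterization_biased_around_x}, apply the tower property to each $f$ separately, and then assemble a countable determining subfamily to collapse the null sets. This works (modulo a small slip: the put payoffs $(k-\cdot)_+$ and the $\phi_{1/n}$ are not themselves elements of $\Psi^\beta_x$, so what you are really doing is testing condition~(3) of the corollary directly rather than condition~(2); the argument is still sound). The paper instead dispatches the converse in one line by applying the tower property at the level of measures: writing $\law(X_1\mid X_0=x)$ as the $\law(\cdot\mid X_0=x)$-average of the random kernels $\law(X_1\mid\F_0)$, each of which is atomic $\beta$-biased around $x$ on $\{X_0=x\}$, one obtains a mixture of atomic $\beta$-biased probabilities around $x$, which is $\beta$-biased around $x$ by Definition~\ref{def.bb}(3) itself. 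This sidesteps the countable-family reduction entirely, since closure under mixtures is built into the definition rather than recovered from test functions.
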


\begin{proof}
    Let $\pi \in \cplbbc(\mu,\nu)$ and let $(\pi_x)_x$ be its $\mu$-disintegration. Since for $\mu$-a.e. $x$, $\pi_x$ is $\beta$-biased around $x$, by Lemma \ref{lem:atomis_as_a_mixture_of_simple} we can decompose  
    $\pi_x$ into a mixture of simple $\beta$-biased probabilities $(\pi_{x,u})_u$, so that $\pi_x = \int \pi_{x,u} \, \lambda(du).
    $
    Note that, by  Remark~\ref{rem:meausurability_of_decomposition}, $(\pi_{x,u})_{x,u}$ can be taken to be measurable in $(x,u)$.

    Define the probability space $(\Omega,\F,\P)$ by taking $\Omega = \R^3$, $\F = \B(\R^3)$, and setting 
    $$ \P(dx, du, dy) = \mu(dx) \otimes \lambda(du) \otimes \pi_{x,u}(dy).$$
    On this space, define the random variables  
    $$
    X_0(x,u,y) = x, \quad U(x,u,y) = u, \quad X_1(x,u,y) = y.
    $$ 
    Finally, set the filtration $\F_0 = \sigma(X_0,U)$ and $\F_1 = \sigma(X_0,U,X_1)$. By construction, $(X_0,X_1)$ is a martingale. Moreover, since $\law(X_1|X_0=x,U=u) = \pi_{x,u}$, it follows that $X$ is a $\beta$-biased martingale.

    Conversely, let $(X_t)_{t=0,1}$ be a $\beta$-biased martingale. Then, for all $A \in \B(\R)$ we have
    \[
    \P(X_1 \in A | X_0) = \E[\P(X_1 \in A | \F_0) | X_0],
    \]
    which implies that $\law(X_1|X_0)$ is a.s.\ $\beta$-biased around $X_0$. Finally, from $\law(X_0,X_1) = \pi \in \cpls(\mu,\nu)$, we conclude that $\pi \in \cplbbc(\mu,\nu)$.
\end{proof}

We recall the notion of $\beta$-biased order given in the introduction, as it is central to our study. For this we use the $\beta$-envelope $g_\beta$ defined in \eqref{eq:beta_transform}. 
\begin{definition}
Let $\beta\in[0,1)$. We say that $\mu, \nu\in \Pc_1(\R)$ are in $\beta$-biased order, denoted $\mu\bbc\nu$, if $\mu(g_\beta) \leq \nu(g)$ for all $g \in C_{L}(\R)$. In particular, for $\beta = 0$, the $\beta$-biased order coincides with the convex order.
\end{definition}

\begin{remark}\label{rem:g_beta_transform}
For any $g \in C_L(\mathbb{R})$ and $x \in \mathbb{R}$, the $\beta$-envelope satisfies
$$
g_\beta(x) = \inf \{ \nu(g) : \delta_x \prec_\beta \nu \}.
$$
Indeed, if $\delta_x \prec_\beta \nu$, then $\nu$ admits a decomposition $\nu = \int \nu_u \, \rho(du)$ such that $\bar{\nu}_u = x$ and $\nu_u(\{S(\nu_u)\}) \geq \beta$ for $\rho$-almost every $u$, so 
$
\nu(g) = \int \nu_u(g)\, \rho(du) \geq \int g_\beta(x)\, \rho(du) = g_\beta(x),
$
hence $g_\beta(x) \leq \inf \{ \nu(g) : \delta_x \prec_\beta \nu \}$. The reverse inequality holds by definition of $g_\beta(x)$.
\end{remark}

\begin{proof}[Proof of Theorem~\ref{thm:bbc-Strassen_intro}]
If $\beta = 0$, the claim reduces to the Strassen theorem for convex order. We therefore assume $\beta \in (0,1)$ in what follows.

Define $c: \mathbb{R} \times \mathcal{P}_1(\mathbb{R}) \to \mathbb{R} \cup \{+\infty\}$ by
$$
c(x,p) := \sup_{f \in \Psi^\beta \cap C_L(\mathbb{R})} \int f(y - x)\, p(dy).
$$
Then $c$ is convex and lower semi-continuous in  $p$ (as a supremum of linear and continuous functions $p\mapsto\int f(y-x)\, p(dy)$), and bounded from below (since $0 \in \Psi^\beta$). 
By Corollary~\ref{cor:convexbiased}, we have
$$
c(x,p) = 
\begin{cases}
0, & \text{if } p \text{ is } \beta\text{-biased around } x, \\
+\infty, & \text{otherwise}.
\end{cases}
$$

Consider the weak optimal transport problem
$$
T_c(\mu, \nu) := \inf_{\pi \in \cpls(\mu, \nu)} \int c(x, \pi_x)\, \mu(dx),
$$
where $(\pi_x)_x$ is the $\mu$-disintegration of $\pi$.  
It follows that $T_c(\mu, \nu) = 0$ if and only if $\cplbbc(\mu, \nu) \neq \emptyset$.

For any $\psi \in C_L(\mathbb{R})$, we have
$$
\inf_{p \in \mathcal{P}_1(\mathbb{R})} \left( p(\psi) + c(x,p) \right) = \psi_\beta(x),
$$
and by the weak transport duality \cite[Theorem~3.1]{BaBePa18}, it follows that
$$
T_c(\mu, \nu) = \sup_{\psi \in C_L(\mathbb{R})} \left( \mu(\psi_\beta) - \nu(\psi) \right).
$$
Thus, $T_c(\mu, \nu) = 0$ if and only if $\mu(\psi_\beta) \leq \nu(\psi)$ for every $\psi \in C_L(\mathbb{R})$.
    Hence, we obtain the equivalence:
\[
    \mu \prec_{\beta} \nu \quad \text{if and only if} \quad \cplbbc(\mu, \nu) \neq \emptyset.
\]
\end{proof}
Note that Theorem \ref{thm:bbc-Strassen_intro} in particular implies that
\[
\text{$\nu$ is $\beta$-biased around $x$ \quad $\Longleftrightarrow$ \quad $\delta_x \bbc \nu$},
\]
in which case $x=\bar \nu$.

 The following proposition provides another useful representation of the $\beta$-envelope $g_\beta$.
 
 \begin{proposition}
 The $\beta$-envelope of $g \in C_L(\R)$ can be written as
   \begin{equation}
       g_\beta(x) = \sup\left\{y : \exists f\in \Psi_x^\beta \text{ such that } y\leq f+g \right\}.
   \end{equation}
 \end{proposition}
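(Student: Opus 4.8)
The plan is to show that the right-hand side, call it $h(x) := \sup\{y : \exists f \in \Psi_x^\beta \text{ with } y \le f + g\}$, equals $g_\beta(x)$ by establishing the two inequalities separately, using the characterization of $\beta$-biased measures around $x$ from Corollary~\ref{cor:Characterization_biased_around_x} together with Remark~\ref{rem:g_beta_transform}. Note first that $h(x) = \sup\{\inf_z (f(z-x) + g(z)) : f \in \Psi_x^\beta\}$, since the largest constant $y$ with $y \le f + g$ pointwise is exactly $\inf_z(f(z-x)+g(z))$.

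For the inequality $h(x) \le g_\beta(x)$: fix any $\nu \in \Pc_1(\R)$ with $\delta_x \prec_\beta \nu$, i.e.\ $\nu$ is $\beta$-biased around $x$ (and hence $\bar\nu = x$). By Corollary~\ref{cor:Characterization_biased_around_x}, $\nu(f) \le 0$ for every $f \in \Psi_x^\beta$. Then for any $f \in \Psi_x^\beta$,
\[
\inf_z\big(f(z-x) + g(z)\big) \le \int \big(f(z-x) + g(z)\big)\, \nu(dz) = \nu(f \circ T_{-x}) + \nu(g) \le \nu(g),
\]
where I use that $f \circ T_{-x} \in \Psi_x^\beta$ paired correctly — more precisely $f$ here already denotes the element of $\Psi_x^\beta$, which by Definition~\ref{Def: beta biased around x} is of the form $\tilde f \circ T_{-x}$, so $\int f\, d\nu \le 0$ directly. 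Taking the supremum over $f \in \Psi_x^\beta$ gives $h(x) \le \nu(g)$, and then the infimum over all such $\nu$ together with Remark~\ref{rem:g_beta_transform} yields $h(x) \le g_\beta(x)$.

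For the reverse inequality $g_\beta(x) \le h(x)$: this is the genuine content and the main obstacle. One needs to produce, for each $\varepsilon > 0$, a function $f \in \Psi_x^\beta$ with $f + g \ge g_\beta(x) - \varepsilon$ pointwise; equivalently, a supporting/minorizing function from the test class that certifies $g_\beta(x)$. The natural route is a Hahn--Banach / separation argument: consider the value $g_\beta(x) = \inf\{\nu(g) : \nu \text{ is } \beta\text{-biased around } x\}$ and observe that the set of $\beta$-biased measures around $x$ is convex and closed (in the appropriate topology on sub-probability or probability measures with first moments), being cut out by the linear inequalities $\{\nu : \nu(f) \le 0 \ \forall f \in \Psi_x^\beta\}$ plus the affine constraints $\tm\nu = 1$, $\bar\nu = x$. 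Minimizing the linear functional $\nu \mapsto \nu(g)$ over this set and dualizing the constraints $\nu(f) \le 0$ produces exactly a minorant of $g$ of the form (affine in $y$) $+ \sum$ (nonnegative combination of elements of $\Psi_x^\beta$) that bounds $g_\beta(x)$ from below; since $\Psi_x^\beta$ is a convex cone stable under the relevant operations and contains the affine antisymmetric-around-$x$ functions, this combination can be absorbed into a single $f \in \Psi_x^\beta$. Alternatively — and more cleanly, avoiding a fresh duality — one can invoke Theorem~\ref{thm:bbc-Strassen_intro} and its duality proof directly: the weak-transport duality there already gives $\mu(\psi_\beta) = \sup\{\ldots\}$; specializing to $\mu = \delta_x$ and unwinding the definition of $c(x,\cdot)$ as a supremum over $\Psi^\beta \cap C_L(\R)$ should express $g_\beta(x) = \psi_\beta(x)$ as precisely the supremum over $f \in \Psi_x^\beta$ of $\inf_z(f(z-x)+g(z))$, which is $h(x)$. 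I expect the second approach to be shorter: the hard analytic work (closedness of the $c$-sublevel description, the minimax exchange) is already done inside the proof of Theorem~\ref{thm:bbc-Strassen_intro}, so the proposition follows by reading off the $\mu = \delta_x$ case and matching notation. The one point requiring care is checking that restricting test functions to $\Psi_x^\beta$ rather than a larger class of $\psi \in C_L(\R)$ loses nothing — this should follow because $c(x,p) = \sup_{f \in \Psi^\beta \cap C_L}\int f(y-x)p(dy)$ already involves only this class, so $\inf_p(p(\psi) + c(x,p))$ over all $\psi$ reduces to optimizing $f$.
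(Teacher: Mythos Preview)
Your easy direction $h(x)\le g_\beta(x)$ is fine and matches the paper's. The gap is in the hard direction.

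Your preferred route is to read off the result from the proof of Theorem~\ref{thm:bbc-Strassen_intro}, but the weak-transport duality invoked there does \emph{not} deliver the pointwise minimax you need. What that proof uses is
\[
T_c(\mu,\nu)=\sup_{\psi\in C_L(\R)}\big(\mu(\psi_\beta)-\nu(\psi)\big),
\qquad
\psi_\beta(x)=\inf_{p\in\Pc_1(\R)}\big(p(\psi)+c(x,p)\big),
\]
with the second identity just the definition of $\psi_\beta$. Specializing $\mu=\delta_x$ gives $c(x,\nu)=\sup_\psi(\psi_\beta(x)-\nu(\psi))$, a statement about $c$, not about $g_\beta$. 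To get what you want you must still swap the order in
\[
g_\beta(x)=\inf_{p}\sup_{f\in\Psi_x^\beta}\,p(f+g)
\quad\stackrel{?}{=}\quad
\sup_{f\in\Psi_x^\beta}\inf_{p}\,p(f+g)=h(x),
\]
and nothing in the cited duality performs that interchange for you. Your remark that ``the hard analytic work \dots\ is already done inside the proof'' is therefore not accurate: the black box \cite{BaBePa18} is a different minimax.

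The paper proves exactly this interchange directly. It first rewrites both sides as an $\inf_{\nu\in\Pc(\R)}\sup_{f\in\Psi_x^\beta}\nu(f+g)$ versus $\sup_f\inf_\nu$, using that $\sup_{f\in\Psi_x^\beta}\nu(f)=0$ on $\{\nu:\delta_x\bbc\nu\}$ and $+\infty$ off it. Then it compactifies: for $K_n=[x-n,x+n]$ the set $\Pc(K_n)$ is compact, the map $\nu\mapsto\nu(f+g)$ is linear and continuous, $\Psi_x^\beta$ is convex, so Sion's minimax gives equality on $\Pc(K_n)$; finally one lets $n\to\infty$, using that compactly supported $\beta$-biased measures are $\W_1$-dense to see the left side decreases to $g_\beta(x)$. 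Your Hahn--Banach sketch could in principle be made to work, but you would have to supply the same compactness/closure ingredients, so it would not be shorter.
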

 \begin{proof}
     First note that, for any $f \in \Psi_x^\beta$ and $\nu$ such that $\delta_x \bbc \nu$, we have $\nu(f) \leq 0$ and $\sup_{f \in \Psi_x^\beta} \nu(f) = 0$. Then, from \eqref{eq:beta_transform}, we get
    $$ g_\beta(x) = \inf_{\delta_x \bbc \nu} \sup_{f \in \Psi_x^\beta} \nu(f + g).$$
     Moreover, for any probability $\rho \notin  \{\nu: \delta_x \bbc \nu \}$, we have that $\sup_{f \in \Psi_x^\beta} \rho(f) = +\infty$, hence 
     \begin{equation}\label{eq:g_beta_via_int}
         g_\beta(x) = \inf_{\nu \in \mathcal{P}(\R)} \sup_{f \in \Psi_x^\beta} \nu(f + g).
    \end{equation}

     On the other hand, 
     \begin{align*}
    \sup\Bigl\{y : \exists f \in \Psi_x^\beta \text{ such that } y \le f+g \Bigr\}
    &= \sup_{f \in \Psi_x^\beta} \inf_{y \in \R} \bigl(f(y) + g(y)\bigr) \\
    &= \sup_{f \in \Psi_x^\beta} \inf_{\nu \in \mathcal{P}(\R)} \nu(f + g).
    \end{align*}
     Thus, we are left to prove the following min-max equality: 
     $$ \inf_{\nu \in \mathcal{P}(\R)} \sup_{f \in \Psi_x^\beta} \nu(f + g) = \sup_{f \in \Psi_x^\beta} \inf_{\nu \in \mathcal{P}(\R)} \nu(f + g).$$
     For $n\in\mathbb{N}$, let $K_n := [x-n, x+n]$ and denote by $\mathcal P (K_n)$ the set of probability measures supported on it. Then, by Prokhorov's theorem, $\mathcal P (K_n)$ is compact, hence the min-max theorem yields
     \begin{equation}\label{eq:minimax_prop_1sec}
      \inf_{\nu \in \mathcal{P}(K_n)} \sup_{f \in \Psi_x^\beta} \nu(f + g) = \sup_{f \in \Psi_x^\beta} \inf_{\nu \in \mathcal{P}(K_n)} \nu(f + g).
    \end{equation}
    From Lemma \ref{lem:atomis_as_a_mixture_of_simple},  every $\beta$-biased $\nu \in \Pc_1(\R)$ can be approximated in $\mathcal W_1$ by a sequence $(\nu_n)_n$ of compactly supported $\beta$-biased probabilities. Using Remark \ref{rem:g_beta_transform}, it is immediate that the sequence $\inf_{\nu \in \mathcal{P}(K_n)} \sup_{f \in \Psi_x^\beta} \nu(f + g)$ is monotonically decreasing to $g_\beta(x)$, hence we can pass to the limit in \eqref{eq:minimax_prop_1sec} and obtain
      \[ g_\beta(x) = \inf_{\nu \in \mathcal{P}(\R)} \sup_{f \in \Psi_x^\beta} \nu(f + g) = \sup_{f \in \Psi_x^\beta} \inf_{\nu \in \mathcal{P}(\R)} \nu(f + g).\]
\end{proof}

\section{Strongly biased probability measures}\label{sec: strongly_biased_probabilities}
In this section, motivated by the arbitrage-free pricing of American options (see argument in the Introduction), we define a stronger version of the $\beta$-biased order and examine its properties.
For the remainder of this section, we fix a parameter  $\beta \in (0,1)$   and write $a := a(\beta) = \frac{\beta}{1-\beta}$
for convenience. This notation will be used unless otherwise specified.

\subsection{Strongly biased probability measures}

\begin{definition}\label{def:str_biased}
A measure $\nu \in \Pc_1(\R)$ centered at $0$ is called \emph{strongly $\beta$-biased} if there exist a probability measure $\rho \in \Pc(\R)$ and a kernel $(\nu_x)_x \subset \Pc_1(\R)$, where $\nu_x$ is simple $\gamma(x)$-biased 
with $\gamma(x) > \beta$ 
such that $\nu = \int \nu_x \, \rho(dx)$.

A measure $\nu \in \Pc_1(\R)$ is called \emph{strongly $\beta$-biased around $x$} if it is obtained as a shift of a strongly $\beta$-biased measure, i.e., if $\nu = T_{x \#} \nu_0$ for some strongly $\beta$-biased $\nu_0$.
\end{definition}

\begin{lemma}\label{lem:atomic_not_simple_implies_stronglybiased}
Let $\nu$ be an atomic $\beta$-biased probability. If $\tm{\nu^R} > \beta$, then $\nu$ is strongly $\beta$-biased.
\end{lemma}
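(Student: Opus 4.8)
The plan is to recycle the explicit decomposition constructed in the proof of Lemma~\ref{lem:atomis_as_a_mixture_of_simple} and to track that, under the strict hypothesis $\tm{\nu^R} > \beta$, the atom masses appearing there are strictly larger than $\beta$.

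First I would dispose of the degenerate case: since $\nu$ is centered at $0$, if $\nu^L = 0$ then $\nu = \nu^R$ is supported on $[0,\infty)$ with barycenter $0$, forcing $\nu = \delta_0$, which is simple $1$-biased and hence (trivially, with $\rho = \delta_0$) strongly $\beta$-biased. So assume $\nu^L \neq 0$, i.e.\ $\bar\nu^L < 0$, and use the kernel from \eqref{eq:decomposition_to_simple_atomic}: for $x \in [0,S(\nu)]$, set $B_x := \frac{-\bar\nu^L}{x - \bar\nu^L}$, $\rho(dx) := \nu^R(dx)/B_x$, and $\nu_x := \frac{1-B_x}{\tm{\nu^L}}\nu^L + B_x\delta_x$. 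Exactly as in Lemma~\ref{lem:atomis_as_a_mixture_of_simple}, $\rho$ is a probability on $[0,S(\nu)]$, each $\nu_x$ is a probability centered at $0$ whose restriction to $[0,\infty)$ is the single atom $B_x\delta_x$, and $\nu = \int \nu_x\,\rho(dx)$.

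The key step is the strict estimate on $B_x$. Since $\tm{\nu^R} > \beta$ we have $\tm{\nu^L} = 1 - \tm{\nu^R} < 1-\beta$, and since $\bar\nu^L < 0$ this gives the strict inequality $\bar\nu^L\tm{\nu^L} > (1-\beta)\bar\nu^L$. Combining with $\bar\nu^R\tm{\nu^R} \geq \beta\, S(\nu)$ (because $\nu(\{S(\nu)\}) \geq \beta$ and $S(\nu)\geq 0$), the identity $0 = \bar\nu = \bar\nu^L\tm{\nu^L} + \bar\nu^R\tm{\nu^R}$ yields $0 > (1-\beta)\bar\nu^L + \beta\, S(\nu)$, hence $\beta\bigl(S(\nu)-\bar\nu^L\bigr) < -\bar\nu^L$. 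Therefore, for every $x \in [0,S(\nu)]$,
\[
B_x = \frac{-\bar\nu^L}{x-\bar\nu^L} \geq \frac{-\bar\nu^L}{S(\nu)-\bar\nu^L} =: \gamma > \beta .
\]
Thus every $\nu_x$ is simple $\gamma$-biased with the common constant $\gamma > \beta$, and $\nu = \int \nu_x\,\rho(dx)$ is a decomposition of the form required by Definition~\ref{def:str_biased}, so $\nu$ is strongly $\beta$-biased.

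The only point demanding care is the propagation of strictness, i.e.\ checking that $\tm{\nu^R} > \beta$ forces a \emph{strict} inequality in the barycenter bound used in Lemma~\ref{lem:atomis_as_a_mixture_of_simple}; this is precisely why the case $\bar\nu^L = 0$ (equivalently $\nu = \delta_0$) must be separated out first. Everything else is a verbatim repetition of the construction already carried out for Lemma~\ref{lem:atomis_as_a_mixture_of_simple}.
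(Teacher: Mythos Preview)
Your proposal is correct and follows essentially the same approach as the paper: both reuse the explicit decomposition \eqref{eq:decomposition_to_simple_atomic} from Lemma~\ref{lem:atomis_as_a_mixture_of_simple} and verify that the hypothesis $\tm{\nu^R}>\beta$ upgrades the bound $B_x\ge\beta$ to the strict inequality $B_x>\beta$ via the barycenter identity. Your version is a bit more careful---you separate out $\nu=\delta_0$ explicitly and extract a uniform lower bound $\gamma=\frac{-\bar\nu^L}{S(\nu)-\bar\nu^L}>\beta$ rather than just $B_x>\beta$ pointwise---but the argument is otherwise the same as the paper's.
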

Note that the lemma in particular implies that any atomic $\beta$-biased probability which is not simple is strongly $\beta$-biased.
\begin{proof}
    From $$0 = \bar{\nu} = \bar{\nu}^L \tm{\nu^L} + \bar{\nu}^R \tm{\nu^R} > \beta S(\nu) + (1 - \beta)\bar{\nu}^L,$$ it follows that $\beta < \frac{-\bar{\nu}^L}{x - \bar{\nu}^L} =: B_x$,
    for all $x \in [0, S(\nu)]$. By \eqref{eq:decomposition_to_simple_atomic}, we then have the representation $\nu = \int \nu_x \, \rho(dx)$, where $(\nu_x)_{x\geq0}$ are $B_x$-biased probabilities with $B_x > \beta$.
\end{proof}

\begin{lemma}\label{lem:support^Restriction0}
Let $\nu \in \Pc_1(\R)$ be $\beta$-biased around $x$ and assume that $|s(\nu)| < \infty$. Then  
$$S(\nu) \leq \frac{x}{\beta} - \frac{s(\nu)}{a}.$$  
Furthermore, if $\nu \neq \delta_x$ and $\nu$ is strongly $\beta$-biased around $x$, then  
$\nu\big(\big\{ \frac{x}{\beta} - \frac{s(\nu)}{a} \big\}\big) = 0.$
\end{lemma}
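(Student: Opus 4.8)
The plan is to first reduce to the centered case $x=0$ by translating via $T_{-x}$, so that $\nu$ becomes $\beta$-biased around $0$ with $s(\nu)$ and $S(\nu)$ replaced by $s(\nu)-x$ and $S(\nu)-x$; the claimed bound $S(\nu)\le \frac{x}{\beta}-\frac{s(\nu)}{a}$ then becomes $S(\nu_0)\le -\frac{s(\nu_0)}{a}$ for the centered measure $\nu_0$. To prove this, I would invoke Corollary~\ref{cor:convexbiased}(3): a centered $\beta$-biased $\nu_0$ satisfies $\nu_0^R \prec a R^\beta_{\#}\nu_0^L + \alpha\delta_0$ with $\alpha\ge 0$. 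Since convex order preserves (or shrinks) the convex hull of the support, $S(\nu_0^R)$ is at most the right endpoint of $\supp(a R^\beta_{\#}\nu_0^L+\alpha\delta_0)$. Now $R^\beta$ sends the negative half-line to the positive half-line via $y\mapsto -y/a$ for $y\le 0$, so the right endpoint of $R^\beta_{\#}\nu_0^L$ is $-s(\nu_0^L)/a = -s(\nu_0)/a$ (assuming $s(\nu_0)<0$; if $\nu_0=\delta_0$ the inequality is trivial), and multiplying by $a$ gives right endpoint $-s(\nu_0)$... wait, I must be careful: the measure is $a R^\beta_{\#}\nu_0^L$, a scaling of the measure not of the support, so its support is still that of $R^\beta_{\#}\nu_0^L$, with right endpoint $-s(\nu_0)/a$. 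Hence $S(\nu_0)=S(\nu_0^R)\le \max\{-s(\nu_0)/a,\,0\} = -s(\nu_0)/a$, which is exactly the desired bound after translating back. (One should double-check the edge cases where $\nu_0^L$ or $\nu_0^R$ is null, but these are routine.)

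For the second assertion, suppose in addition that $\nu$ is strongly $\beta$-biased around $x$ and $\nu\ne\delta_x$; again reduce to $x=0$. By Definition~\ref{def:str_biased}, $\nu_0 = \int \nu_u\,\rho(du)$ where each $\nu_u$ is simple $\gamma(u)$-biased with $\gamma(u)>\beta$. Set $K := \frac{x}{\beta}-\frac{s(\nu)}{a}$, which in centered coordinates is $K_0 = -s(\nu_0)/a$; I want to show $\nu_0(\{K_0\})=0$. The key point is that for each component $\nu_u$, the first part of the lemma (applied with parameter $\gamma(u)$ in place of $\beta$) gives $S(\nu_u)\le -s(\nu_u)/a(\gamma(u))$. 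Since $a(\cdot)$ is strictly increasing and $\gamma(u)>\beta$, we get $a(\gamma(u))>a$, hence $S(\nu_u)\le -s(\nu_u)/a(\gamma(u)) < -s(\nu_u)/a \le -s(\nu_0)/a = K_0$, where the last inequality uses $s(\nu_u)\ge s(\nu_0)$ (as $\nu_u\ll$ "is a piece of" the mixture, so $\supp(\nu_u)\subseteq\supp(\nu_0)$ up to the mixing — more precisely $s(\nu_u)\ge s(\nu_0)$ holds for $\rho$-a.e.\ $u$). Thus each $\nu_u$ puts no mass at or above $K_0$, i.e.\ $\nu_u(\{K_0\})=0$ for $\rho$-a.e.\ $u$, and integrating gives $\nu_0(\{K_0\})=0$. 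I should be slightly careful with the strict inequality in the degenerate case $\nu_u=\delta_0$ (where $s=S=0<K_0$ as long as $s(\nu_0)<0$, which holds since $\nu_0\ne\delta_0$), and with ensuring $s(\nu_u)$ is finite $\rho$-a.e., which follows from $|s(\nu)|<\infty$ together with $\supp(\nu_u)\subseteq[s(\nu_0),\infty)$.

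The main obstacle I anticipate is the bookkeeping around supports of the mixture components: justifying that $s(\nu_u)\ge s(\nu_0)$ for $\rho$-a.e.\ $u$ (which is standard — a mixture cannot have a component charging a region the mixture itself does not) and handling the boundary/degenerate cases ($\nu_u=\delta_0$, or $\nu_0^L=0$) cleanly. The strictness $\gamma(u)>\beta \Rightarrow a(\gamma(u))>a$ is what upgrades the weak inequality $S(\nu_u)\le -s(\nu_u)/a(\gamma(u))$ into the strict separation from $K_0$, and this is really the heart of why the atom at $K_0$ vanishes. Everything else is the translation reduction and an application of Corollary~\ref{cor:convexbiased} together with the elementary fact that convex order does not enlarge the support.
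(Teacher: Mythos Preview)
Your proof is correct. For the first assertion you take a genuinely different route from the paper: rather than decomposing $\nu$ into simple $\beta$-biased pieces and bounding $S(\nu_u)$ via the barycenter identity $x = \|\nu_u^L\|\bar\nu_u^L + \|\nu_u^R\|\bar\nu_u^R \ge (1-\beta)s(\nu_u) + \beta S(\nu_u)$, you invoke Corollary~\ref{cor:convexbiased}(3) and read off the support bound directly from the convex-order relation $\nu_0^R \prec aR^\beta_{\#}\nu_0^L + \alpha\delta_0$. This is slicker for the weak inequality, since it avoids the decomposition altogether, though it relies on machinery (the characterization of $\beta$-biased via convex order) that the paper develops earlier. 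The paper's argument is more elementary and self-contained, and has the advantage that the equality case is transparent: equality in the chain forces $\nu_u = (1-\beta)\delta_{s(\nu)} + \beta\delta_{S(\nu_u)}$, which is immediately ruled out when the components are strictly more than $\beta$-biased.

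For the second assertion your argument and the paper's essentially coincide: both pass to the mixture $\nu_0 = \int \nu_u\,\rho(du)$ into simple $\gamma(u)$-biased pieces with $\gamma(u)>\beta$, and both exploit the strict inequality $a(\gamma(u))>a$ to show $S(\nu_u) < K_0$ for $\rho$-a.e.\ $u$. You phrase this as bootstrapping the first part with parameter $\gamma(u)$; the paper re-runs the barycenter inequality and identifies the (now impossible) equality case. Your handling of the edge cases ($\nu_u=\delta_0$, and $s(\nu_u)\ge s(\nu_0)$ $\rho$-a.e.) is correct.
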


\begin{proof}
    Let $\nu = \int \nu_u \, \rho(du)$ be a decomposition of $\nu$ as a mixture of simple  $\beta$-biased probabilities $(\nu_u)_u$, with $\rho$ being a probability measure. Then,  
    $$x = \bar{\nu}_u = \bar{\nu}_u^L \tm{\nu_u^L} + \tm{\nu_u^R} \bar{\nu}_u^R \geq (1-\beta) s(\nu_u) + \beta S(\nu_u),$$  
    implying  
    \begin{equation}\label{eq:bounds_of_biased_support}
    S(\nu_u) \leq \frac{x}{\beta} - \frac{s(\nu_u)}{a} \leq \frac{x}{\beta} - \frac{s(\nu)}{a}.
    \end{equation}  
    Since $S(\nu) \leq \sup_u S(\nu_u)$, from \eqref{eq:bounds_of_biased_support} we get $S(\nu) \leq \frac{x}{\beta} - \frac{s(\nu)}{a}$.  
    Moreover, we have equality in \eqref{eq:bounds_of_biased_support} if and only if $\nu_u = \beta \delta_{S(\nu_u)} + (1-\beta) \delta_{s(\nu)}$. Therefore, if $(\nu_u)_u$ are simple strongly $\beta$-biased around $x$ and $\nu \neq \delta_x$, we have a strict inequality in \eqref{eq:bounds_of_biased_support}, implying 
    $\nu_u(\{ \frac{x}{\beta} - \frac{s(\nu)}{a} \} ) = 0$ $\rho$-a.e.,  
    and so  
    $\nu(\{ \frac{x}{\beta} - \frac{s(\nu)}{a} \} ) = 0.$
\end{proof}

In particular, Lemma~\ref{lem:support^Restriction0} can be used to construct distributions in $\Pc_1(\R)$ that are not $\beta$-biased around their center for any $\beta > 0$. The following corollary provides a sufficient condition for this to occur.
\begin{corollary}
\label{cor:not_beta_biased}
    Let $\nu \in \Pc_1(\R)$ satisfy $|s(\nu)| < \infty$ and $S(\nu) = \infty$. Then $\nu$ is not $\beta$-biased around $x = \bar{\nu}$ for any $\beta > 0$.
\end{corollary}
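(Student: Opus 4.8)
The plan is to argue by contradiction using the barycenter bound in Lemma~\ref{lem:support^Restriction0}. Suppose for some $\beta > 0$ the measure $\nu$ is $\beta$-biased around $x = \bar\nu$. By hypothesis $|s(\nu)| < \infty$, so Lemma~\ref{lem:support^Restriction0} applies directly and gives
\[
S(\nu) \;\leq\; \frac{x}{\beta} - \frac{s(\nu)}{a(\beta)}.
\]
The right-hand side is a finite real number (it depends only on the fixed quantities $x = \bar\nu$, $\beta$, and $s(\nu)$), whereas by assumption $S(\nu) = \infty$. This is the desired contradiction, so no such $\beta > 0$ can exist.

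The only subtlety worth spelling out is the degenerate case $\nu = \delta_x$, which is excluded here since $S(\nu) = \infty$ forces $\nu \neq \delta_x$; thus Lemma~\ref{lem:support^Restriction0} is genuinely applicable and there is nothing further to check. I do not anticipate any real obstacle: the corollary is an immediate consequence of the finiteness of the upper bound furnished by the preceding lemma, once one observes that $S(\nu)=\infty$ is incompatible with that bound. One could alternatively phrase the argument via the test-function characterization in Corollary~\ref{cor:Characterization_biased_around_x}(3), noting that $a(\beta) R^{\beta,x}_{\#}\nu^L$ is compactly supported on the left of $x$ (since $s(\nu) > -\infty$) while $\nu^R$ has unbounded support, so the required convex order $\nu^R \prec a(\beta) R^{\beta,x}_{\#}\nu^L + \alpha \delta_x$ fails; but the route through Lemma~\ref{lem:support^Restriction0} is shorter and is the one I would write up.
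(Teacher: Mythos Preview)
Your proof is correct and matches the paper's approach exactly: the corollary is stated in the paper as an immediate consequence of Lemma~\ref{lem:support^Restriction0}, with no separate proof given. One tiny remark: the first assertion of Lemma~\ref{lem:support^Restriction0} does not actually exclude the case $\nu = \delta_x$, so your discussion of that degenerate case is harmless but unnecessary.
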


Following \cite{BeJu16}, we define a stricter version of convex order.
\begin{definition}\label{def: irreducibility}
    Let $\mu, \nu$ be finite measures on $\mathbb{R}$ with finite first moment. 
    We say that the pair $(\mu, \nu)$ is irreducible if $\mu \prec \nu$ and one of the following holds true:
    \begin{enumerate}[label = (\roman*)]
        \item there exists an open interval $I$ (bounded or not) such that $\mu(I)$ and $\nu(\overline{I})$ have the total mass, and $\int(k-y)_+d \mu < \int(k-y)_+d \nu $ for all $k \in I$;
        \item $\mu = \nu$ is a Dirac measure.
    \end{enumerate}
    We write $\mu \suc \nu$ if $(\mu,\nu)$ is irreducible.

\end{definition}

The following lemma generalizes the results presented in Corollary \ref{cor:convexbiased}  to the case of strongly $\beta$-biased measures, and shows the difference between these two notions.

\begin{lemma}\label{L: stronglybetabiased_characterization}
Let $\nu \in \Pc_1(\R)$. Then:
\begin{enumerate}
    \item $\nu$ is strongly $\beta$-biased if and only if there exists a coupling
    $$
    \pi \in \cpls\left( \frac{\nu^R}{\tm{\nu^R}}, \frac{a R^\beta_{\#}\nu^L + \alpha \delta_0}{\tm{\nu^R}} \right),
    $$
    with $\alpha = \tm{\nu^R} - a\tm{\nu^L} > 0$, such that for $\nu^R$-almost all $x$, $\bar{\pi}_x = x$ and $\pi_x(\{0\}) > 0$, where $(\pi_x)_x$ is the disintegration of $\pi$ with respect to the first marginal.
    
    \item If $\nu(\{0\}) = 0$, then $\nu$ is strongly $\beta$-biased if and only if  
    $$
    \nu^R \suc aR^\beta_{\#} \nu^L + \alpha \delta_0,
    $$
    with $\alpha = \tm{\nu^R} - a\tm{\nu^L} > 0$.
\end{enumerate}
\end{lemma}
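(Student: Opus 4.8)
The plan is to mirror the proof of Lemma~\ref{lem:betabiased_test_functions} and Corollary~\ref{cor:convexbiased}, tracking the strict inequalities that distinguish the strong notion. For part (1), I would prove both implications by inspecting the explicit decomposition already built in the proof of Lemma~\ref{lem:betabiased_test_functions}. For the ``only if'' direction, suppose $\nu$ is strongly $\beta$-biased, so by Definition~\ref{def:str_biased} we may write $\nu = \int \nu_u\,\rho(du)$ with each $\nu_u$ simple $\gamma(u)$-biased, $\gamma(u) > \beta$. Applying the test-function characterization of Lemma~\ref{lem:betabiased_test_functions} (which $\nu$ satisfies since strong $\beta$-biased implies $\beta$-biased), we obtain $\nu^R \prec a R^\beta_{\#}\nu^L + \alpha\delta_0$ with $\alpha = \tm{\nu^R} - a\tm{\nu^L}$; the point is that $\alpha > 0$ rather than merely $\alpha \geq 0$, which follows because each $\nu_u$ has $\tm{\nu_u^R} = \gamma(u) > \beta$, hence $\tm{\nu^R} > \beta$ and $\tm{\nu^L} < 1-\beta$, and $\tm{\nu^R} - a\tm{\nu^L} > \beta - a(1-\beta) = 0$. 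The existence of the coupling $\pi$ with $\bar\pi_x = x$ then follows from Strassen \cite[Theorem~8]{St65} exactly as before, and the condition $\pi_x(\{0\}) > 0$ for $\nu^R$-a.e.\ $x$ comes from the identity $\beta_x = 1/a + 1 - \pi_x(\{0\})/a$ combined with \eqref{eq:atoms_in_decomposition}: the atom mass $1/\beta_x$ of $\nu_x$ equals $\gamma(u) > \beta$ forces $\pi_x(\{0\}) > 0$. Conversely, given such a $\pi$ with $\alpha > 0$ and $\pi_x(\{0\}) > 0$ $\nu^R$-a.e., the decomposition \eqref{eq:Decomposition_nu_v1}--\eqref{eq:Decomposition_nu_v2} produces simple $\beta_x^{-1}$-biased measures $\nu_x$ with $\beta_x^{-1} = (1/a + 1 - \pi_x(\{0\})/a)^{-1} > \beta$ precisely because $\pi_x(\{0\}) > 0$, so $\nu$ is strongly $\beta$-biased.

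For part (2), assume additionally $\nu(\{0\}) = 0$. One implication: if $\nu$ is strongly $\beta$-biased then part (1) gives $\nu^R \prec a R^\beta_{\#}\nu^L + \alpha\delta_0$ with $\alpha > 0$, and I must upgrade $\prec$ to $\suc$. Here the strict atom masses $\pi_x(\{0\}) > 0$ translate, via the Breeden--Litzenberger-type test functions $\phi_k(y) = (k-y)_+$ used in the proof of Corollary~\ref{cor:convexbiased}, into the strict inequality $\nu^R(\phi_k) < (a R^\beta_{\#}\nu^L + \alpha\delta_0)(\phi_k)$ on the relevant open interval; I would identify this interval as $I = (s(a R^\beta_{\#}\nu^L + \alpha\delta_0),\, S(\nu^R))$ or similar, and verify that the total-mass condition of Definition~\ref{def: irreducibility}(i) holds because $\nu^R$ and $aR^\beta_{\#}\nu^L + \alpha\delta_0$ have the same total mass $\tm{\nu^R}$ and $\nu^R$ is supported in $[0,\infty)$ while $aR^\beta_{\#}\nu^L + \alpha\delta_0$ is supported in $(-\infty,0]$, so $0 \in \overline I$. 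The Dirac case (ii) is excluded by $\nu(\{0\}) = 0$ together with $\nu \neq \delta_0$ (if $\nu = \delta_0$ it is not strongly $\beta$-biased). For the reverse implication, $\nu^R \suc aR^\beta_{\#}\nu^L + \alpha\delta_0$ with $\alpha > 0$ gives a martingale coupling $\pi$ (by Strassen), and I would argue that irreducibility forces $\pi_x(\{0\}) > 0$ for $\nu^R$-a.e.\ $x$: if $\pi_x(\{0\}) = 0$ on a set of positive measure, the mass $\alpha\delta_0 = \int \pi_x|_{\{0\}}\,\nu^R(dx)$ would have to be transported from the remaining $x$'s, but irreducibility (no splitting across a ``barrier'') prevents some atoms of $\nu^R$ from avoiding $0$ entirely — more precisely, I would use that for an irreducible pair the disintegration cannot have $\pi_x$ supported away from any given point in the interior of the target's convex hull. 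Then part (1) applies.

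The main obstacle I anticipate is the reverse implication of part (2): extracting ``$\pi_x(\{0\}) > 0$ for $\nu^R$-a.e.\ $x$'' from irreducibility $\nu^R \suc aR^\beta_{\#}\nu^L + \alpha\delta_0$. This requires a genuine structural fact about irreducible martingale couplings — essentially that when the target measure has an atom at an interior point (here $0$, which is interior since $\nu^R$ reaches into $[0,\infty)$ and $R^\beta_{\#}\nu^L$ into $(-\infty,0]$), every source point must place positive mass there, or else one could split off a sub-coupling violating the irreducibility interval condition. I would either invoke the description of irreducible components from \cite{BeJu16} directly, or give a short self-contained argument: suppose $A := \{x : \pi_x(\{0\}) = 0\}$ has $\nu^R(A) > 0$; restricting $\pi$ to $x \in A$ yields a martingale sub-coupling whose target $\int_A \pi_x\,\nu^R(dx)$ omits the atom at $0$, and then the complementary restriction to $A^c$ would have target containing all of $\alpha\delta_0$, giving a nontrivial decomposition of $(aR^\beta_{\#}\nu^L + \alpha\delta_0)$ into pieces separated by the point $0$, contradicting that the irreducible interval $I$ must be connected and have $0$ in its closure with strict inequality of put prices throughout $I$. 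Making this contradiction precise — in particular handling the boundary behaviour at $0$ and the possibility $s(\nu^R) = 0$ — is the delicate point; everything else is a routine transcription of the already-established $\beta$-biased arguments with ``$\geq$'' replaced by ``$>$''.
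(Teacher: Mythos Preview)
Your proposal has two real gaps. In Part~(1), ``only if'' direction, you obtain a martingale coupling $\pi$ from Strassen and then assert $\pi_x(\{0\})>0$ by appealing to the identity $\beta_x = 1/a + 1 - \pi_x(\{0\})/a$ and the fact that the atom masses in your given decomposition exceed $\beta$. But these are two \emph{different} decompositions: the $\nu_x$ in \eqref{eq:Decomposition_nu_v2+} are built from whatever $\pi$ you feed in, and there is no reason the Strassen coupling should reproduce the $\nu_u$ you started with. An arbitrary martingale coupling of $\nu^R/\tm{\nu^R}$ with $(aR^\beta_\#\nu^L+\alpha\delta_0)/\tm{\nu^R}$ need not satisfy $\pi_x(\{0\})>0$ $\nu^R$-a.e.\ even when $\nu$ is strongly $\beta$-biased. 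The paper instead constructs $\pi$ directly from the strong decomposition: for each simple $\gamma(u)$-biased piece $\nu_u=\nu_u^L+\gamma(u)\delta_{M}$ the coupling is unique and one computes $\pi_M(\{0\})=\alpha_u/\gamma(u)>0$; averaging these over $u$ gives the required $\pi$. (Incidentally, $R^\beta$ sends $(-\infty,0]$ to $[0,\infty)$, so $aR^\beta_\#\nu^L+\alpha\delta_0$ is supported in $[0,\infty)$, not in $(-\infty,0]$ as you write; the relevant interval is $I=(0,-s(\nu)/a)$.)

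The second gap is the one you flagged yourself, in Part~(2) ``if''. Your proposed contradiction---restrict to $A=\{x:\pi_x(\{0\})=0\}$ and its complement to produce a forbidden splitting---does not go through: irreducibility in the sense of Definition~\ref{def: irreducibility} is about strict inequality of put functions on an interval, not about indecomposability of the target, and the two restricted targets $\int_A\pi_x\,\nu^R(dx)$ and $\int_{A^c}\pi_x\,\nu^R(dx)$ can overlap arbitrarily on $(0,\infty)$ without contradicting anything. Irreducibility does \emph{not} force every martingale coupling to visit the atom at $0$; it only guarantees that \emph{some} coupling does. The paper sidesteps the issue entirely by invoking the stretched Brownian motion coupling of \cite{BaBeScTs23}, whose disintegration $\pi_x^{SBM}$ is known to be equivalent to the second marginal for irreducible pairs; since the second marginal charges $\{0\}$, so does every $\pi_x^{SBM}$, and then Part~(1) applies. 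Without this (or an equivalent existence result for a coupling with full-support kernels), your argument for this direction cannot be completed.
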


\begin{proof}
First, we show that for a measure $\nu$ to be strongly $\beta$-biased, it is necessary that $\alpha = \tm{\nu^R} - a\tm{\nu^L} > 0$. If $\nu$ is strongly $\beta$-biased, then from Corollary \ref{cor:convexbiased} it follows that
$$\nu^R \prec a R^{a}_{\#} \nu^L + \alpha \delta_0,$$
with $\alpha \geq 0$. Thus, $\alpha = \tm{\nu^R} - a\tm{\nu^L} = \frac{\tm{\nu^R} - \beta}{1-\beta}$, and since $\nu$ is a mixture of probabilities each with an atom larger than $\beta$ on $\mathbb{R}_+$, this implies $\tm{\nu^R} - \beta > 0$, thus ensuring $\alpha > 0$.

      Ad 1. Suppose that $\nu$ is strongly $\beta$-biased. If $\nu$ is simple, meaning that $\nu = \nu^L + \gamma \delta_M$ for some $\gamma > \beta$ and $M\geq0$, then the set $\cpls\left( \frac{\nu^R}{\tm{\nu^R}}, \frac{a R^\beta_{\#} \nu^L + \alpha \delta_0}{\tm{\nu^R}} \right)$ contains a unique coupling, given by  
        $$
        \pi = \delta_M \otimes  \frac{a R^\beta_{\#}\nu^L + \alpha \delta_0}{\gamma}.
        $$  
        The corresponding disintegration kernel with respect to the first marginal consists of a single probability measure $
        \pi_M = \frac{a R^\beta_{\#}\nu^L + \alpha \delta_0}{\gamma}.
        $
        This yields $\bar{\pi}_M = M$ and $\pi_M(\{0\}) = \frac{\alpha}{\gamma}=\frac{\gamma-\beta}{\gamma(1-\beta)} > 0$.

        Let now $\nu$ be a general strongly $\beta$-biased probability, and consider its decomposition $\nu = \int \nu_u \, \rho(du)$, with $\nu_u$ simple $\gamma(u)$-biased probability, for some $\gamma(u) > \beta$. As established,  for $\rho$-a.e. $u$, 
        the set $\cpls\left( \frac{\nu_u^R}{\tm{\nu^R_u}}, \frac{a R^\beta_{\#} \nu_u^L + \alpha_u \delta_0}{\tm{\nu^R_u}} \right)$ consists of a unique coupling $\pi^u$ satisfying $\pi^u_x(\{0\}) > 0$ and $\bar{\pi}_x^u = x$ for $\nu_u^R$-almost every $x$.
Setting $\pi := \int \pi^u \, \rho(du)$, we have that $\pi \in \cpls\left( \frac{\nu^R}{\tm{\nu^R}}, \frac{a R^\beta_{\#} \nu^L + \alpha \delta_0}{\tm{\nu^R}} \right)$,  
        and its disintegration  with respect to the first marginal $(\pi_x)_x$ satisfies $\bar{\pi}_x = x$ and $\pi_x(\{0\}) > 0$ for $\nu^R$-almost all $x$.

        Now suppose that there exists a coupling $\pi \in \cpls\left( \frac{\nu^R}{\tm{\nu^R}}, \frac{a R^\beta_{\#}\nu^L + \alpha \delta_0}{\tm{\nu^R}} \right)$ such that $\bar{\pi}_x = x$ and $\pi_x(\{0\}) > 0$ for $\nu^R$-almost all $x$. Repeating the proof of Lemma \ref{lem:betabiased_test_functions} and using \eqref{eq:Decomposition_nu_v2} and \eqref{eq:atoms_in_decomposition}, we obtain that $\nu$ admits the decomposition $\nu = \int \nu_x \, \mu(dx)$, where $\mu\in \Pc(\R)$ and each $\nu_x$ satisfies $\bar{\nu}_x = 0$ and  
        $
        \nu_x^R = \frac{1}{\beta_x} \delta_x,
        $
        with  
        \begin{equation}\label{eq:atom_mass_in_decomposition_sbb}
        \frac{1}{\beta_x} = \frac{1}{1/a + 1 - 1/a \pi_x(\{0\})} > \frac{1}{1/a + 1} = \beta.
        \end{equation}  
        Thus, each $\nu_x$ is a simple $1/\beta_x$-biased probability with $1/\beta_x > \beta$, which implies that $\nu$ is strongly $\beta$-biased.
            
        Ad 2. Suppose that  $\nu(\{0\}) = 0$ and $\nu$ is strongly $\beta$-biased. Denote $I := \big(0, -\frac{s(\nu)}{a}\big)$ and for simplicity write $\gamma_\beta := aR^\beta_{\#} \nu^L + \alpha \delta_0$. Then $S(\gamma_\beta) = -\frac{s(\nu)}{a}$, and it follows from Lemma~\ref{lem:support^Restriction0} that 
        \begin{equation}\label{nuRirr}
        \nu^R( I) = \tm{\nu^R} = \gamma_\beta(\bar I). 
\end{equation}
        Since $\nu^R \prec \gamma_\beta$, in order to show irreducibility (Definition \ref{def: irreducibility}), we need to prove that, for any $k \in I$, we have $\int (k-y)_+ \, \nu^R(dy) < \int (k-y)_+ \, \gamma_\beta(dy)$.
        Since $\nu$ is strongly $\beta$-biased, the first part of the proof guarantees the existence of a coupling $\pi \in \cpls\left( \frac{\nu^R}{\tm{\nu^R}}, \frac{a R^\beta_{\#}\nu^L + \alpha \delta_0}{\tm{\nu^R}} \right)$ such that $\bar{\pi}_x = x$ and $\pi_x(\{0\}) > 0$ for $\nu^R$-almost all $x$. Then \begin{equation}\label{eq:disintegration_of_nuR_coupling}
            \int(k-y)_+ \, \gamma_\beta(dy) = \int \int(k-y)_+ \, \pi_x(dy) \, \nu^R(dx).
        \end{equation}          
        By Jensen's inequality, for every $k$ and for $\nu^R$-almost all $x$, we have $\int (k-y)_+ \, \pi_x(dy) \geq (k-x)_+$.  Moreover, for $\nu^R$-almost every $x$, since $\pi_x(\{0\}) > 0$ and $\bar \pi_x = x$, we have the strict inequality
        $$
        \int (k-y)_+\,\pi_x(dy) > (k-x)_+
        $$
        for all $k \in (0, S(\pi_x))$.
        Therefore, for $k \in I$, we obtain  
        $$\textstyle \nu^R(\{x: \int(k-y)_+ \, \pi_x(dy) > (k-x)_+ \}) > 0,$$  
        which implies  
        $$
        \int \int(k-y)_+ \, \pi_x(dy) \, \nu^R(dx) > \int (k-x)_+ \, \nu^R(dx).
        $$
        Substituting this into \eqref{eq:disintegration_of_nuR_coupling}, we conclude that, for any $k\in I$,  
        \[\int(k-y)_+ \, \gamma_\beta(dy) > \int (k-x)_+ \, \nu^R(dx).\]  
        Together with \eqref{nuRirr}, this shows that $\nu^R \prec_{sc} \gamma_\beta$. 
        
        Now, to prove the other implication, assume that $\nu^R \suc aR^\beta_{\#} \nu^L + \alpha \delta_0$ with $\alpha > 0$. The idea is to construct a martingale coupling 
        \[\pi \in \cpls\left( \frac{\nu^R}{\tm{\nu^R}}, \frac{a R^\beta_{\#}\nu^L + \alpha \delta_0}{\tm{\nu^R}} \right)\]
        such that $\pi_x(\{ 0 \}) > 0$ for $\nu^R$-almost all $x$. Then, by the first statement of the lemma, this will imply that $\nu$ is strongly $\beta$-biased. 
        For any coupling $\pi \in \cpls\left( \frac{\nu^R}{\tm{\nu^R}}, \frac{a R^\beta_{\#}\nu^L + \alpha \delta_0}{\tm{\nu^R}} \right)$, we have $0 < \alpha = \int \pi_x(\{0\}) \nu^R(dx)$, which implies that $\pi_x(\{0\}) > 0$ for a set of $x$'s with positive $\nu^R$-measure.  
        According to \cite{BaBeScTs23}, the stretched Brownian motion coupling $\pi^{SBM} \in \cpls\left( \frac{\nu^R}{\tm{\nu^R}}, \frac{a R^\beta_{\#}\nu^L + \alpha \delta_0}{\tm{\nu^R}} \right)$ has the property that a.s. $\pi_x^{SBM}$ 
         is equivalent to the second marginal, where $(\pi_x^{SBM})_x$ denotes the disintegration with respect to the first marginal. Since $a R^\beta_{\#}\nu^L + \alpha \delta_0$ assigns positive mass to $\{0\}$, it follows that
        $$
        \pi^{SBM}_x(\{0\}) > 0 \quad \text{for } \nu^R\text{-almost every }x.
        $$
        Taking $\pi = \pi^{SBM}$ and applying the first statement of the lemma, we complete the proof.
\end{proof}

Similarly to Corollary \ref{cor:Characterization_biased_around_x}, the statements of Lemma \ref{L: stronglybetabiased_characterization} extend to probabilities that are strongly $\beta$-biased around $x$, as follows:
\begin{corollary}\label{cor:Characterization_biased_around_x_strongly}
Let $\nu \in \Pc_1(\R)$. Then:
\begin{enumerate}
    \item  $\nu$ is strongly $\beta$-biased around $x$ if and only if there is a coupling  $$\pi \in \cpls\left( \frac{\nu^R}{\tm{\nu^R}}, \frac{a R^{\beta,x}_{\#}\nu^L + \alpha \delta_x}{\tm{\nu^R}} \right),$$ with $\alpha = \tm{\nu^R} - a\tm{\nu^L} > 0$, such that $\bar{\pi}_z = z$ and $\pi_z(\{x\}) > 0$ for $\nu^R$-almost all $z$. 
    \item Suppose that $\nu(\{x\}) = 0$. Then $\nu$ is strongly $\beta$-biased around $x$ if and only if $$\nu^R \suc aR^{\beta,x}_{\#} \nu^L + \alpha \delta_x,$$ with $\alpha = \tm{\nu^R} - a\tm{\nu^L} > 0$.
\end{enumerate}
\end{corollary}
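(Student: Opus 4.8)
The plan is to reduce both statements to the centered case handled in Lemma~\ref{L: stronglybetabiased_characterization}, by conjugating with the translation $T_x$, in exactly the same spirit in which Corollary~\ref{cor:Characterization_biased_around_x} is deduced from Corollary~\ref{cor:convexbiased}. By Definition~\ref{def:str_biased}, $\nu$ is strongly $\beta$-biased around $x$ if and only if $\nu_0 := T_{-x\#}\nu$ is strongly $\beta$-biased. So the first step is to record the elementary transformation rules. Since $\bar\nu = x$ whenever $\nu = T_{x\#}\nu_0$ with $\nu_0$ centered, the barycenter cut commutes with translation, giving $T_{-x\#}\nu^L = \nu_0^L$ and $T_{-x\#}\nu^R = \nu_0^R$; in particular $\tm{\nu^L} = \tm{\nu_0^L}$, $\tm{\nu^R} = \tm{\nu_0^R}$, and the constant $\alpha = \tm{\nu^R} - a\tm{\nu^L}$ is the same in both pictures. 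A direct comparison of \eqref{eq:Stratching Function} with \eqref{eq:R function 2} shows $R^{\beta,x} = T_x \circ R^\beta \circ T_{-x}$, hence
\[
aR^{\beta,x}_{\#}\nu^L + \alpha\delta_x = T_{x\#}\bigl(aR^\beta_{\#}\nu_0^L + \alpha\delta_0\bigr).
\]

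Next I would transport couplings. The map $\pi \mapsto (T_{-x}\otimes T_{-x})_{\#}\pi$ is a bijection between $\cpls\bigl(\tfrac{\nu^R}{\tm{\nu^R}}, \tfrac{aR^{\beta,x}_{\#}\nu^L + \alpha\delta_x}{\tm{\nu^R}}\bigr)$ and $\cpls\bigl(\tfrac{\nu_0^R}{\tm{\nu_0^R}}, \tfrac{aR^\beta_{\#}\nu_0^L + \alpha\delta_0}{\tm{\nu_0^R}}\bigr)$, and on disintegrations it acts by $(\pi_0)_z = T_{-x\#}\pi_{z+x}$. Consequently $\overline{(\pi_0)_z} = z$ for $\nu_0^R$-a.e.\ $z$ if and only if $\bar\pi_z = z$ for $\nu^R$-a.e.\ $z$, and $(\pi_0)_z(\{0\}) = \pi_{z+x}(\{x\})$, so the atom condition $(\pi_0)_z(\{0\}) > 0$ for $\nu_0^R$-a.e.\ $z$ is equivalent to $\pi_z(\{x\}) > 0$ for $\nu^R$-a.e.\ $z$. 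Applying statement~1 of Lemma~\ref{L: stronglybetabiased_characterization} to $\nu_0$ then yields statement~1 of the corollary verbatim.

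For statement~2 the one additional ingredient needed is that the stricter convex order $\suc$ of Definition~\ref{def: irreducibility} is invariant under simultaneous translation of both measures. This is routine: $\mu \prec \nu$ is translation-invariant because $\phi \mapsto \phi\circ T_x$ preserves convexity; the open interval $I$ in case~(i) is simply replaced by $I + x$, full mass and closure are preserved, and $\int(k-y)_+\,d\mu < \int(k-y)_+\,d\nu$ for $k\in I$ becomes the same inequality at $k+x$ for $T_{x\#}\mu, T_{x\#}\nu$; case~(ii) is obviously translation-invariant. Since also $\nu(\{x\}) = 0 \iff \nu_0(\{0\}) = 0$, combining statement~2 of Lemma~\ref{L: stronglybetabiased_characterization} applied to $\nu_0$ with the identity $aR^{\beta,x}_{\#}\nu^L + \alpha\delta_x = T_{x\#}(aR^\beta_{\#}\nu_0^L + \alpha\delta_0)$ gives statement~2.

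I do not expect a genuine obstacle here: the whole argument is bookkeeping around the conjugation by $T_x$. The only point that deserves explicit care is checking the identity $R^{\beta,x} = T_x\circ R^\beta\circ T_{-x}$ together with the fact that the left/right decomposition $\nu = \nu^L + \nu^R$ commutes with translation, since both are defined relative to the barycenter, which itself shifts under $T_x$; once these are in place, every other claim is immediate from Lemma~\ref{L: stronglybetabiased_characterization}.
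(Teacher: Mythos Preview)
Your proposal is correct and matches the paper's own approach: the paper does not write out a proof for this corollary at all, simply remarking that the statements of Lemma~\ref{L: stronglybetabiased_characterization} extend to the non-centered case ``similarly to Corollary~\ref{cor:Characterization_biased_around_x}''. Your argument carries out exactly this translation-by-$T_x$ bookkeeping, and the checks you flag (the identity $R^{\beta,x} = T_x\circ R^\beta\circ T_{-x}$, commutation of the $L/R$ split with translation, and translation-invariance of $\suc$) are all routine and correct.
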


\begin{remark}
The last corollary implies that a symmetric distribution $\nu \in \mathcal{P}_1(\mathbb{R})$ cannot be strongly $1/2$-biased, while it is $1/2$-biased (see Remark~\ref{rem:symm}).
\end{remark}

\subsection{Strong biased order and strongly biased martingale couplings}

Similarly to Definition~\ref{def:bbc-coupling}, we introduce the strong $\beta$-biased order and the set of strongly $\beta$-biased martingale couplings:

\begin{definition}\label{def:sbbc-order}
Let $\mu, \nu \in \Pc_1(\R)$. The set of strongly $\beta$-biased martingale couplings of $\mu$ and $\nu$, denoted by $\cplsbbc(\mu, \nu)$
consists of all couplings $\pi$ of $\mu$ and $\nu$ such that the $\mu$-disintegration $(\pi_x)_x$ of $\pi$ satisfies $\mu$-a.s. that $\pi_x$ is strongly $\beta$-biased around $x$.
We say that $\mu, \nu$ are in strong $\beta$-biased order, denoted by $\mu\sbbc\nu$, if $\cplsbbc(\mu, \nu) \neq \emptyset$. In particular, $\nu$ is strongly $\beta$-biased around $x$ if and only if $\delta_x \sbbc \nu$, in which case $x=\bar \nu$.
\end{definition}

In the remainder of the section, we explore the connections between $\beta$-biased probabilities and strongly $\beta$-biased probabilities.
The first simple lemma shows that the set of strongly $\beta$-biased probabilities is dense in the set of $\beta$-biased probabilities.
\begin{lemma}\label{Closure_of_biased}
    The set of $\beta$-biased probabilities is the closure in $\W_1$ of the set of strongly $\beta$-biased probabilities, i.e.
        $$\{ \nu : \delta_x \bbc \nu \} = \overline{ \{ \nu : \delta_x \sbbc \nu \} }.$$
\end{lemma}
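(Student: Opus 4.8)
The plan is to prove the two inclusions separately. First, since every strongly $\beta$-biased probability is in particular $\beta$-biased (a simple $\gamma(x)$-biased measure with $\gamma(x)>\beta$ is simple $\beta$-biased, and the set of $\beta$-biased measures is closed under mixtures), we have $\{\nu : \delta_x \sbbc \nu\} \subseteq \{\nu : \delta_x \bbc \nu\}$. The inclusion of the closure then follows once we know that the set $\{\nu : \delta_x \bbc \nu\}$ is $\W_1$-closed. This closedness is a consequence of Theorem \ref{thm:bbc-Strassen_intro} together with Corollary \ref{cor:convexbiased}: being $\beta$-biased around $x$ is equivalent to $\nu(f)\le 0$ for all $f\in \Psi_x^\beta \cap C_L(\R)$, and for each such $f$ the map $\nu\mapsto\nu(f)$ is $\W_1$-continuous (as $f\in C_L(\R)$), so the set is an intersection of closed half-spaces, hence closed.

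For the reverse inclusion, we need to show that every $\beta$-biased $\nu$ (around $x$, $x=\bar\nu$) can be approximated in $\W_1$ by strongly $\beta$-biased probabilities. Without loss of generality take $x=0$. The idea is a two-parameter perturbation. Given a decomposition $\nu = \int \nu_u\,\rho(du)$ of $\nu$ into simple $\beta$-biased probabilities (Lemma \ref{lem:atomis_as_a_mixture_of_simple}, Remark \ref{rem:meausurability_of_decomposition}), I would perturb each $\nu_u = \nu_u^L + \gamma_u \delta_{S(\nu_u)}$ (with $\gamma_u\ge\beta$) by moving a small fraction of mass so as to strictly increase the atom at the rightmost point while keeping the barycenter at $0$. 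Concretely, for small $\varepsilon>0$, replace $\nu_u$ by $\nu_u^\varepsilon := (1-\varepsilon)\nu_u^L \ast(\text{tiny spreading}) + (\gamma_u + c_u\varepsilon)\delta_{m_u^\varepsilon}$ where the left part is compressed slightly toward its barycenter and the right atom's location $m_u^\varepsilon$ and excess mass $c_u\varepsilon$ are chosen to restore mean zero; for $\varepsilon$ small this is simple $(\gamma_u+c_u\varepsilon)$-biased with $\gamma_u+c_u\varepsilon>\beta$, hence strongly $\beta$-biased, and $\W_1(\nu_u^\varepsilon,\nu_u)\to 0$ uniformly (or dominatedly) in $u$. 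Mixing, $\nu^\varepsilon := \int \nu_u^\varepsilon\,\rho(du)$ is strongly $\beta$-biased and $\W_1(\nu^\varepsilon,\nu)\to 0$. An alternative, cleaner route: first reduce to compactly supported $\nu$ (truncate, as in the proof of the Proposition on $g_\beta$), then for compactly supported $\beta$-biased $\nu$ use the characterization $\nu^R \prec a R^\beta_{\#}\nu^L + \alpha\delta_0$ from Corollary \ref{cor:convexbiased} and perturb $\nu^L$ slightly (e.g. scale it toward its barycenter by a factor $1-\varepsilon$ and put the freed-up mass partly at $0$ and partly spread on $\R_-$) so that the convex-order inequality becomes the strict order $\suc$ of Definition \ref{def: irreducibility} with strictly positive $\alpha$, then invoke Lemma \ref{L: stronglybetabiased_characterization}(2) to conclude $\nu^\varepsilon$ is strongly $\beta$-biased.

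I expect the main obstacle to be the measurability and uniformity of the perturbation in the mixture variable $u$: one must ensure the maps $u\mapsto \nu_u^\varepsilon$ are jointly measurable and that $\W_1(\nu^\varepsilon,\nu)\le \int \W_1(\nu_u^\varepsilon,\nu_u)\,\rho(du)\to 0$, which requires a uniform (or $\rho$-integrable) control on the perturbation sizes — the barycenter-correcting location $m_u^\varepsilon$ can blow up when $\gamma_u$ is close to $\beta$ or when $\nu_u^L$ is spread far to the left. The compact-support reduction handles the latter, and a careful choice making the excess atom mass proportional to the available "room" $\gamma_u-\beta$ (or working directly at the level of $\nu$ rather than fiberwise, via the $\suc$-characterization) circumvents the former; this is why I favor the second route, where the perturbation is performed once on $\nu^L$ and the strict order $\suc$ is checked globally.
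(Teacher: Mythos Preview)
Your closedness argument for $\{\nu:\delta_x\bbc\nu\}$ is correct and matches the paper. For the reverse inclusion you are on the right track but are working much harder than necessary, and the complications you anticipate (measurability, uniformity, blow-up near $\gamma_u=\beta$) are artifacts of your perturbation scheme, not of the problem.

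The paper's approach is the following. First reduce to simple $\beta$-biased $\nu=\nu^L+\gamma\,\delta_M$ with $\gamma\ge\beta$ (via Lemma~\ref{lem:atomis_as_a_mixture_of_simple}). Then, for $0<\alpha_n\nearrow 1$, set
\[
\nu_n:=\alpha_n\,\nu^L+\bigl(1-\alpha_n(1-\gamma)\bigr)\,\delta_{M_n},
\]
with $M_n$ chosen so that $\bar\nu_n=x$. That is all: one simply removes a fraction $(1-\alpha_n)$ of the \emph{mass} of $\nu^L$ and places it at the right atom (relocated). The new atom has mass $1-\alpha_n(1-\gamma)>\gamma\ge\beta$, so $\nu_n$ is automatically simple strongly $\beta$-biased, and this holds \emph{uniformly} in $\gamma$---in particular also when $\gamma=\beta$. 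No spreading, no spatial compression, no case distinction is needed, and $\nu_n\to\nu$ is immediate. Your worry that ``$m_u^\varepsilon$ can blow up when $\gamma_u$ is close to $\beta$'' disappears because the excess mass here is $(1-\alpha_n)(1-\gamma)$, which is bounded away from zero whenever $\gamma<1$, independently of how close $\gamma$ is to $\beta$.

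Your alternative route through the irreducible order $\suc$ and Lemma~\ref{L: stronglybetabiased_characterization}(2) could be made to work, but it requires handling the atom at $x$ separately and checking strict inequality of potential functions on an interval; this is considerably heavier machinery than the one-line perturbation above.
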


\begin{proof}
    From Definition \ref{def:str_biased}, we immediately get that $ \{ \nu : \delta_x \sbbc \nu \} \subseteq \{ \nu : \delta_x \bbc \nu \}$. From Lemma \ref{lem:betabiased_test_functions}, it follows that the set $\{ \nu : \delta_x \bbc \nu \}$ is convex and closed. Therefore, it is sufficient to show that, for any $\nu \in \{ \nu : \delta_x \bbc \nu \}$, there exists a sequence of measures $\nu_n \in \{ \nu : \delta_x \sbbc \nu \}$ such that $\nu_n \to \nu$. Moreover, from Lemma \ref{lem:atomis_as_a_mixture_of_simple}, it is enough to demonstrate the existence of such a sequence only for measures in $\{ \nu : \delta_x \bbc \nu \}$ which are simple $\beta$-biased, i.e., $\nu = \nu^L + \gamma \delta_M$ with $\gamma \geq \beta$.
    Now, consider a sequence $0 < \alpha_n \nearrow 1$ and define measures $\nu_n := \alpha_n \nu^L + (1-\alpha_n(1-\gamma)) \delta_{M_n}$, where $M_n$ are such that $\bar{\nu}_n = x$. Then all $\nu_n$ are simple $(1-\alpha_n(1-\gamma))$-biased, with $(1-\alpha_n(1-\gamma))> \beta$, so $\nu_n \in \{ \nu : \delta_x \sbbc \nu \}$, and clearly $\nu_n \to \nu$.
\end{proof}

Recall that for $\nu\in \Pc_1(\R)$ we write $
p_{\nu}(k) = \int (k-y)_+ \, \nu(dy)$.
Note that, for two measures $\mu$ and $\nu$ with same total mass and barycenter, we have  
$\mu \prec \nu$ if and only if  $p_{\mu}(k) \leq p_{\nu}(k) $ for all $k \in \supp(\mu).$

The following proposition establishes a connection between the properties of being strongly $\beta$-biased and being $\gamma$-biased with $\gamma > \beta$, for probabilities with support bounded from below.
\begin{proposition}\label{prob:stronglybetabiased_fixed}
    Consider a probability measure $\nu \neq \delta_x$ such that $s(\nu) > -\infty$. Then, $\nu$ is $\gamma$-biased around $x$ for some $\gamma > \beta$ if and only if $\nu$ is strongly $\beta$-biased around $x$ and $S(\nu) < \frac{x}{\beta} - \frac{s(\nu)}{a(\beta)}$.
\end{proposition}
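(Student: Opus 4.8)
The plan is to prove the two implications separately, after reducing to $x=\bar\nu=0$. This reduction is harmless: both sides of the equivalence force $x=\bar\nu$ (a measure that is $\gamma$-biased around $x$, or strongly $\beta$-biased around $x$, has barycenter $x$), so if $x\ne\bar\nu$ there is nothing to prove; and under a translation $y\mapsto y+c$ the quantities $s(\nu)$, $S(\nu)$ and $\tfrac{x}{\beta}-\tfrac{s(\nu)}{a(\beta)}$ all shift by $c$ (the last because $\tfrac1\beta-\tfrac1{a(\beta)}=1$), so the support inequality is translation invariant. Once $x=\bar\nu=0$, I will use that $\nu\ne\delta_0$ gives $s(\nu)<0<S(\nu)$, that $s(\nu)>-\infty$ together with $s(\nu)\le\bar\nu$ gives $|s(\nu)|<\infty$, and that $S(\nu)<\infty$ holds for free on both sides (via Lemma~\ref{lem:support^Restriction0} on the left, and directly from the hypothesis on the right). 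The forward implication is then quick: if $\nu$ is $\gamma$-biased with $\gamma>\beta$, then by Lemma~\ref{lem:atomis_as_a_mixture_of_simple} I write $\nu=\int\nu_u\,\rho(du)$ as an average of simple $\gamma$-biased probabilities, which is at once a witnessing decomposition for strong $\beta$-biasedness (Definition~\ref{def:str_biased} with bias function $\equiv\gamma>\beta$), and Lemma~\ref{lem:support^Restriction0} applied with parameter $\gamma$ gives $S(\nu)\le -s(\nu)/a(\gamma)=|s(\nu)|\bigl(\tfrac1\gamma-1\bigr)<|s(\nu)|\bigl(\tfrac1\beta-1\bigr)=-s(\nu)/a(\beta)$, using $0<\beta<\gamma<1$ and $|s(\nu)|>0$.

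For the reverse implication I first reduce to the case $\nu(\{0\})=0$. Suppose $m:=\nu(\{0\})\in(0,1)$, and take a strong $\beta$-biased decomposition $\nu=\int\nu_u\,\rho(du)$ with each $\nu_u$ simple $\gamma(u)$-biased, $\gamma(u)>\beta$. Since $\nu_u|_{[0,\infty)}$ is a single atom at $S(\nu_u)\ge0$, an atom of $\nu_u$ at $0$ forces $S(\nu_u)=0$, whence $\bar\nu_u=0$ forces $\nu_u^L=0$, i.e.\ $\nu_u=\delta_0$; consequently $\rho(\{\nu_u=\delta_0\})=m$ and $\nu-m\delta_0=\int_{\{\nu_u\ne\delta_0\}}\nu_u\,\rho(du)$. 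Therefore $\hat\nu:=(\nu-m\delta_0)/(1-m)$ is strongly $\beta$-biased around $0$, with $\hat\nu(\{0\})=0$, $\hat\nu\ne\delta_0$, $s(\hat\nu)=s(\nu)$ and $S(\hat\nu)\le S(\nu)<-s(\nu)/a(\beta)$. Granting the atomless case, $\hat\nu$ is $\gamma$-biased around $0$ for some $\gamma>\beta$; since $\delta_0$ is atomic $\gamma$-biased, the mixture $m\delta_0+(1-m)\hat\nu=\nu$ is $\gamma$-biased around $0$ as well.

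It remains to treat $\nu(\{0\})=0$. For $\gamma\in(\beta,1)$ set $\sigma_\gamma:=a(\gamma)R^\gamma_{\#}\nu^L+\alpha(\gamma)\delta_0$ with $\alpha(\gamma):=\tm{\nu^R}-a(\gamma)\tm{\nu^L}$; one checks that $\sigma_\gamma$ and $\nu^R$ always have the same total mass and barycenter and that $p_{\sigma_\gamma}(k)=\int(a(\gamma)k+z)_+\,\nu^L(dz)+\alpha(\gamma)k$ for $k\ge0$, so that $p_{\sigma_\gamma}\to p_{\sigma_\beta}$ uniformly on $[0,S(\nu)]$ and $\alpha(\gamma)\to\alpha(\beta)$ as $\gamma\downarrow\beta$. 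By Lemma~\ref{L: stronglybetabiased_characterization}(2), strong $\beta$-biasedness gives $\alpha(\beta)>0$ and $\nu^R\suc\sigma_\beta$ with irreducibility interval $I_\beta=(0,-s(\nu)/a(\beta))$, hence $p_{\nu^R}<p_{\sigma_\beta}$ on $I_\beta$; and by the strict support bound, $[\varepsilon,S(\nu)]\subset I_\beta$ for every small $\varepsilon>0$. I will show $\nu^R\prec\sigma_\gamma$ for some $\gamma>\beta$: then Corollary~\ref{cor:convexbiased}, applied with $\gamma$ in place of $\beta$ and with $\alpha(\gamma)\ge0$, yields that $\nu$ is $\gamma$-biased, which completes the proof. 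By the potential-function criterion for measures with equal mass and barycenter, it suffices that $p_{\nu^R}(k)\le p_{\sigma_\gamma}(k)$ for $k\in[0,S(\nu)]\supseteq\supp(\nu^R)$. Fix $\alpha_0:=\alpha(\beta)/2$ and choose $\varepsilon\in(0,S(\nu))$ with $\nu^R([0,\varepsilon])<\alpha_0$ (possible since $\nu^R(\{0\})=0$). Then: for $k\in(0,\varepsilon]$ one has $p_{\nu^R}(k)\le k\,\nu^R([0,k])<\alpha_0 k\le\alpha(\gamma)k\le p_{\sigma_\gamma}(k)$ as soon as $\alpha(\gamma)\ge\alpha_0$; at $k=0$ both sides vanish; and on the compact $[\varepsilon,S(\nu)]$ the continuous function $p_{\sigma_\beta}-p_{\nu^R}$ has a positive minimum $\eta$, so $p_{\nu^R}\le p_{\sigma_\beta}-\eta<p_{\sigma_\gamma}$ once $|p_{\sigma_\gamma}-p_{\sigma_\beta}|<\eta$ throughout $[\varepsilon,S(\nu)]$. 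Choosing $\gamma>\beta$ close enough to satisfy these finitely many smallness constraints finishes the argument.

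The step I expect to be the main obstacle is this last one, the reverse implication with $\nu(\{0\})=0$, and within it the bookkeeping that makes the $\gamma$-perturbation uniform: the threshold $\varepsilon$ must be pinned down using only data of $\nu$---namely $\nu^R(\{0\})=0$ and the strictly positive atom $\alpha(\beta)$ that $\sigma_\beta$ carries at $0$---and only afterwards may one let $\gamma\downarrow\beta$ on the complementary compact. The near-zero regime is precisely where the hypothesis ``strongly'' $\beta$-biased, equivalently $\alpha(\beta)>0$, is indispensable (mere $\beta$-biasedness would permit $\alpha(\beta)=0$), while the regime $k\in[\varepsilon,S(\nu)]$ is exactly where the strict bound $S(\nu)<-s(\nu)/a(\beta)$ enters, since it forces $[\varepsilon,S(\nu)]$ to lie strictly inside the irreducibility interval of Lemma~\ref{L: stronglybetabiased_characterization}(2), where $p_{\nu^R}<p_{\sigma_\beta}$.
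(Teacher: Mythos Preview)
Your proof is correct and follows essentially the same strategy as the paper: reduce to $x=0$ and $\nu(\{0\})=0$, invoke Lemma~\ref{L: stronglybetabiased_characterization}(2) to obtain $\alpha(\beta)>0$ and the strict potential inequality $p_{\nu^R}<p_{\sigma_\beta}$ on the irreducibility interval, then split $[0,S(\nu)]$ into a near-zero piece controlled by the atom $\alpha(\cdot)\delta_0$ and a compact piece handled by continuity in $\gamma$, concluding via Corollary~\ref{cor:convexbiased}. The only tactical difference is how the split point is chosen: the paper first fixes an auxiliary $\tilde\beta>\beta$ and defines $\tilde k$ via $\alpha(\tilde\beta)\tilde k=p_{\nu^R}(\tilde k)$, whereas you fix $\varepsilon$ directly from $\nu^R(\{0\})=0$ and $\alpha_0=\alpha(\beta)/2$; both work for the same reason, and your reduction to $\nu(\{0\})=0$ is in fact more carefully justified than the paper's one-line dismissal.
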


\begin{proof}
    If $\nu$ is $\gamma$-biased for some $\gamma>\beta$, then obviously $\nu$ is strongly $\beta$-biased, and from Lemma~\ref{lem:support^Restriction0} it follows that 
    $$
    S(\nu) \leq \frac{x}{\gamma} - \frac{s(\nu)}{a(\gamma)} = s(\nu) + \frac{x-s(\nu)}{\gamma} < s(\nu) + \frac{x-s(\nu)}{\beta} = \frac{x}{\beta} - \frac{s(\nu)}{a(\beta)}.
    $$
    Now suppose that $\nu$ is strongly $\beta$-biased and $S(\nu) < \frac{x}{\beta} - \frac{s(\nu)}{a(\beta)}$. Without loss of generality, we can assume that $x=0$ and $\nu(\{0\}) = 0$. Indeed, if $\nu(\{0\}) \neq 0$, we can consider the decomposition 
    $\nu = \nu(\{0\})\delta_0 + (1-\nu(\{0\}))\nu^1$, 
    where $\nu^1$ is a probability measure supported outside zero. Then the statement of the lemma holds for $\nu$ if and only if it holds for $\nu^1$. 
    For $\theta \in (0,1)$, we define 
    $\alpha(\theta) = \tm{\nu^R} - a(\theta)\tm{\nu^L}$. 
    The idea is to show that there exists $\hat \beta > \beta$ such that 
    \begin{equation}\label{eq:new_beta_sbc_1}
        \nu^R \prec a(\hat \beta) R^{\hat \beta, 0}_{\#} \nu^L + \alpha(\hat \beta) \delta_0,
    \end{equation}
    where $\alpha(\hat \beta) \geq 0$. By Corollary \ref{cor:Characterization_biased_around_x}, this will imply that $\nu$ is $\hat \beta$-biased. 
    For simplicity, we denote 
    $\gamma_\theta := a(\theta) R^{\theta,0}_{\#} \nu^L + \alpha(\theta) \delta_0$, so that $S(\gamma_\theta ) = s(\nu)(1 - 1/\theta)$. Note that for all $\theta$ such that $\alpha(\theta) \geq 0$, the measure $\gamma_\theta$ is non-negative and satisfies $\tm{\gamma_\theta} = \tm{\nu^R}$ and $\bar{\gamma}_\theta = \bar{\nu}^R$. Therefore, \eqref{eq:new_beta_sbc_1} is equivalent to
    \begin{equation}\label{eq:new_beta_sbc_2}
        p_{\nu^R}(k) \leq p_{\gamma_{\hat \beta}}(k)
    \end{equation}
    for all $k \in [0,S(\nu)]$, for some $\hat \beta > \beta$ such that $\alpha(\hat \beta) \geq 0$.

    Now, fix $\tilde{\beta} > \beta$ such that $\alpha(\tilde{\beta}) > 0$ and $S(\gamma_{\tilde{\beta}}) > S(\nu)$. Such a choice is always possible, since both $\alpha(\theta)$ and $S(\gamma_\theta)$ are continuous and decreasing in $\theta$. Since $\nu^R(\{0\}) = 0$, there exists a unique $\tilde{k} > 0$ such that 
    $\alpha(\tilde \beta) \cdot \tilde{k} = p_{\nu^R}(\tilde{k})$ (see Figure~\ref{fig:determining_k_tilde}).
    Since $p_{\nu^R}$ is convex and satisfies $p_{\nu^R}(0)=0$, and since $\nu(\{0\})=0$, we have that
    $
    p_{\nu^R}(k) < \alpha(\tilde{\beta}) k 
    $
    for all $k \in (0, \tilde{k})$.
From $\nu^R \prec_{sc} \gamma_\beta$, it follows that, for any $k \in [\tilde{k}, S(\nu)]$, 
    $$
    p_{\gamma_\beta}(k) - p_{\nu^R}(k) > 0.
    $$
    Then, by continuity of $(\beta,k) \mapsto (p_{\gamma_\beta} - p_{\nu^R})(k)$ and compactness of $[\tilde k, S(\nu)]$, there exists $\hat{\beta} \in (\beta, \tilde{\beta})$ such that 
    $$
    p_{\gamma_{\hat \beta}} - p_{\nu^R}(k) > 0
    $$
    for all $k \in [\tilde{k}, S(\nu)]$. Since $\hat{\beta} \in (\beta,\tilde{\beta})$, it follows that $\alpha(\hat{\beta}) > \alpha(\tilde{\beta}) > 0$ and $S(\gamma_{\hat{\beta}}) > S(\nu)$ (see Figure \ref{fig:construction_gamma_hat_beta}). Therefore, for $k \in [0, \tilde{k}]$ we have
    $$
    p_{\gamma_{\hat \beta}}(k) \geq \alpha(\hat{\beta}) k \geq \alpha(\tilde{\beta}) k \geq p_{\nu^R}(k).
    $$
    Thus, for all $k \in [0, S(\nu)]$ we have $p_{\nu^R}(k) \leq p_{\gamma_{\hat \beta}}(k)$, implying $\nu^R \prec \gamma_{\hat{\beta}}$.
\end{proof}
\begin{figure}[H]
    \centering
    \begin{subfigure}[b]{0.49\textwidth}
        \centering       \includegraphics[width=\textwidth]{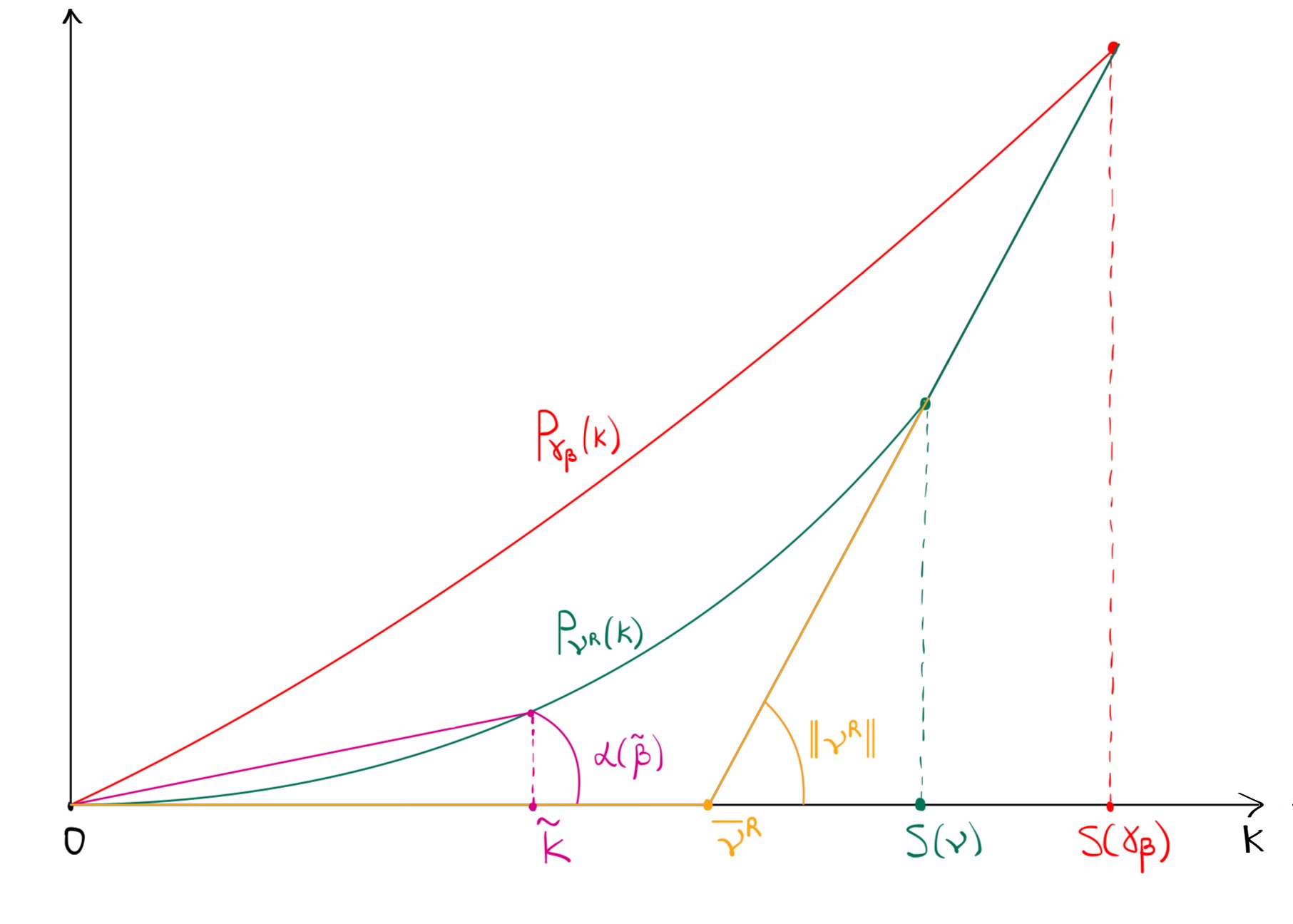}
        \caption{The value $\tilde k$ is defined as the intersection of the green curve $p_{\nu^R}(k)$ and the pink line $\alpha(\tilde \beta) k$. The red curve represents $p_{\gamma_\beta}(k)$, while the yellow curve corresponds to $\tm{\nu^R}(k - \bar{\nu}^R)_+$.}
        \label{fig:determining_k_tilde}
    \end{subfigure}
    \hfill
    \begin{subfigure}[b]{0.49\textwidth}
        \centering        \includegraphics[width=\textwidth]{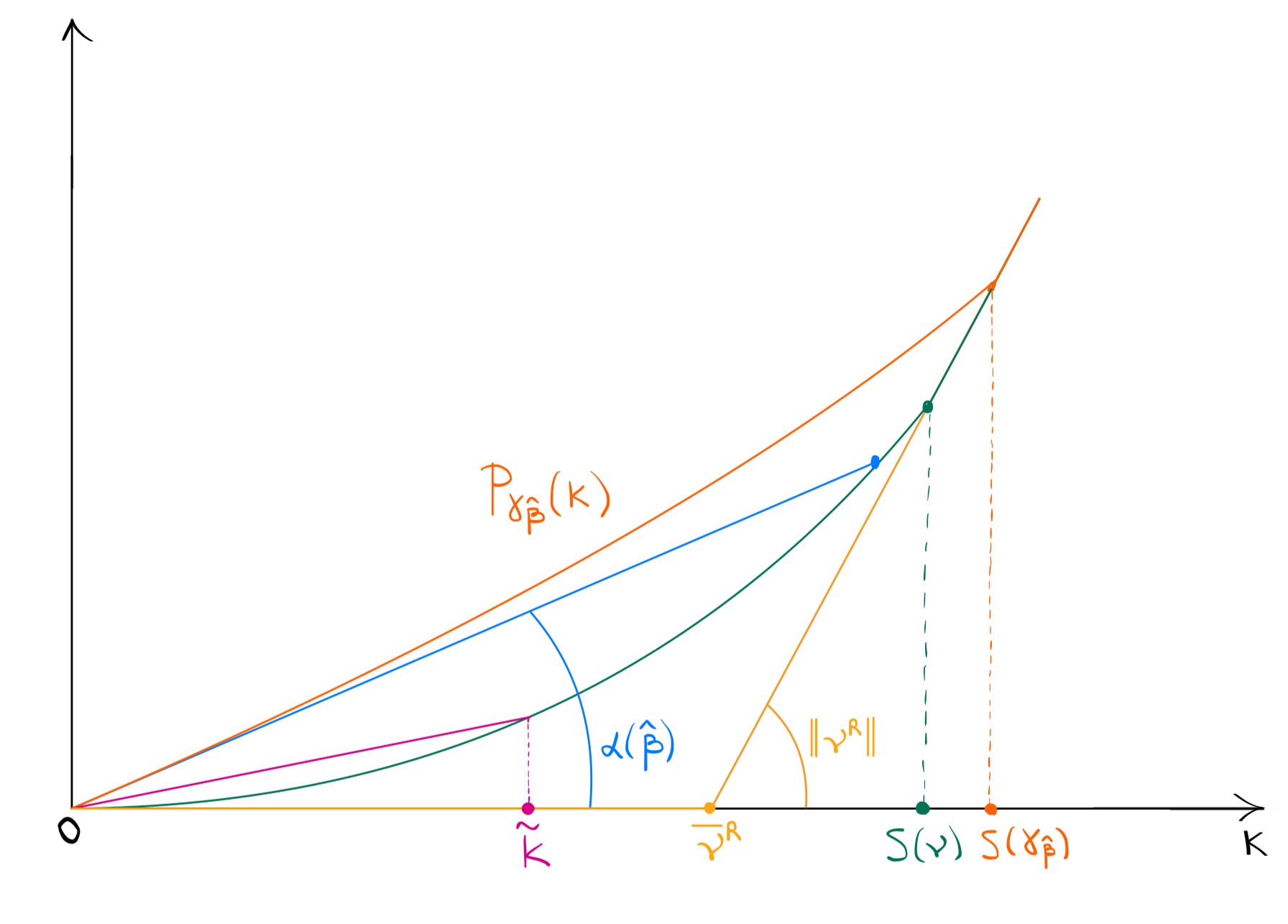}
        \caption{By properly choosing $\hat \beta \in ( \beta, \tilde \beta)$, we ensure that the orange curve $p_{\gamma_{\hat \beta}}(k)$ remains above the green curve $p_{\gamma_\beta}(k)$ for all $k \in [0, S(\nu)]$, while also satisfying $S(\nu) < S(\gamma_{\hat \beta})$.}        \label{fig:construction_gamma_hat_beta}
    \end{subfigure}
    \caption{}
    \label{fig:proof_illustration}
\end{figure}

In the following example we provide  a strongly $\beta$-biased probability measure which fails to be $\gamma$-biased for any $\gamma > \beta$.
\begin{example}
    Let
    $
    \nu = \int_{(0,1)} \left( \tfrac12 u  \delta_{-1} + \left(1 - \tfrac12 u\right)  \delta_{\frac{u}{2 - u}} \right) \, du.
    $
    By construction, $\nu$ is strongly $1/2$-biased. However, since $S(\nu) = -s(\nu) = 1$, Proposition~\ref{prob:stronglybetabiased_fixed} implies that $\nu$ is not $\gamma$-biased for any $\gamma > 1/2$.
\end{example}

\section{\texorpdfstring{$\beta$-biased order and Poisson martingales}{}}
In this section, we characterize the $\beta$-biased order in terms of stochastic integrals. It is well known that for the classical convex order we have $\mu \prec \nu$ if and only if there exist $X_0 \sim \mu$ and a non-negative integrand $H$ such that $X_0 + \int_0^1 H_s\, dB_s \sim \nu$, where $B$ is a Brownian motion. Similarly, we show that the $\beta$-biased order can be characterized through integrals with respect to compensated Poisson processes, which we name Poisson martingales, following \cite{HiYo09}. Specifically, we prove that two distributions are in $\beta$-biased order if and only if they can be coupled via a stochastic integral with respect to the negative compensated Poisson process, with a non-negative integrand, over a time interval of length $\log(1/\beta)$ (Theorem \ref{thm:order_integral_Representation_Intro}). Additionally, we establish a gluing property for $\beta$-biased couplings, which yields a multiplicative version of transitivity (Theorem \ref{thm:transitivity}). Finally, we provide a similar characterization for the strong $\beta$-biased order (Theorem \ref{thm:strict_order_integral_Representation_Intro}).

For the remainder of this section, we fix a standard Poisson process $N$ on a filtered probability space $(\Omega, \mathcal{F}, \mathbb{P}, (\mathcal{F}_t)_{t \geq 0})$, where $\mathcal{F}_0$ is rich enough to support a uniformly distributed random variable. 
As stated in the introduction, we shall provide a characterization of the existence of (strongly) $\beta$-biased martingale couplings in terms of integrals with respect to the negative compensated Poisson process  $M_t= t - N_t$, $t\geq 0$.
For this, an important role will be played by the  first jump time of  $N$, which we denote by $\tau_1$.

\subsection{Gluing properties of $\beta$-biased couplings}
While the $\beta$-biased order is not transitive — that is, $\nu_0 \prec_\beta \nu_1$ and $\nu_1 \prec_\beta \nu_2$ do not imply $\nu_0 \prec_\beta \nu_2$ — the following lemma asserts that $\beta$-biased couplings still exhibit a certain amount of stability under gluing: if $\pi^1 \in \mathrm{Cpl}_{\beta_1}(\nu_0, \nu_1)$ and $\pi^2 \in \mathrm{Cpl}_{\beta_2}(\nu_1, \nu_2)$, then their gluing
$$
\hat{\pi}(dx, dy) = \nu_0(dx) \otimes \left( \int \pi^2_{x_0}(dy)\, \pi^1_x(dx_0) \right)
$$
belongs to $\mathrm{Cpl}_{\beta_1 \beta_2}(\nu_0, \nu_2)$.

\begin{theorem}[$\beta$-biased gluing]\label{thm:transitivity}
Let $\beta_1, \beta_2 \in (0,1)$, and let $\nu_0, \nu_1, \nu_2 \in \Pc_1(\R)$ be such that $\nu_0 \prec_{\beta_1} \nu_1$ and $\nu_1 \prec_{\beta_2} \nu_2$. Then $\nu_0 \prec_{\beta_1 \beta_2} \nu_2$.
Moreover, if at least one of the orders is strict, i.e., $\nu_0 \prec_{s\beta_1} \nu_1$ or $\nu_1 \prec_{s\beta_2} \nu_2$, then  $\nu_0 \prec_{s\beta_1 \beta_2} \nu_2$.
\end{theorem}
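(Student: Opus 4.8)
The plan is to prove the gluing statement at the level of \emph{conditional laws}, using the characterization of $\beta$-biased probabilities as mixtures of simple $\beta$-biased probabilities (Lemma~\ref{lem:atomis_as_a_mixture_of_simple}) together with the structural Corollary~\ref{cor:Characterization_biased_around_x}. The key reduction is this: since $\prec_{\beta_1}$ and $\prec_{\beta_2}$ are equivalent to non-emptiness of the corresponding coupling sets (Theorem~\ref{thm:bbc-Strassen_intro} and Definition~\ref{def:sbbc-order}), it suffices to take $\pi^1 \in \cplbbc[\beta_1](\nu_0,\nu_1)$ and $\pi^2 \in \cplbbc[\beta_2](\nu_1,\nu_2)$, form the gluing $\hat\pi(dx,dy) = \nu_0(dx)\otimes\big(\int \pi^2_{x_0}(dy)\,\pi^1_x(dx_0)\big)$, and show $\hat\pi \in \cplbbc[\beta_1\beta_2](\nu_0,\nu_2)$, i.e.\ that $\hat\pi_x$ is $\beta_1\beta_2$-biased around $x$ for $\nu_0$-a.e.\ $x$. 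By translating, I may assume $x=0$, so the heart of the matter is a pointwise statement about measures: if $\rho$ is $\beta_1$-biased (around $0$) and $(\sigma_z)_z$ is a kernel with $\sigma_z$ being $\beta_2$-biased around $z$, then $\int \sigma_z\,\rho(dz)$ is $\beta_1\beta_2$-biased around $0$.

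To prove this pointwise claim, I would first use Lemma~\ref{lem:atomis_as_a_mixture_of_simple} to write $\rho$ as a mixture of \emph{simple} $\beta_1$-biased probabilities; by Fubini-type arguments and convexity/closedness of the set $\{\nu:\delta_0\prec_{\beta_1\beta_2}\nu\}$ (established via Lemma~\ref{lem:betabiased_test_functions}), it is enough to handle the case where $\rho = \rho^L + \beta_1 \delta_m$ is itself simple $\beta_1$-biased with $m = S(\rho) \geq 0$. The mixture then reads $\int \sigma_z\,\rho(dz) = \int_{\{z<0\}} \sigma_z\,\rho^L(dz) + \beta_1\,\sigma_m$, where each $\sigma_z$ is $\beta_2$-biased around $z$. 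Now I would test against the class $\Psi^{\beta_1\beta_2}$: for $f\in\Psi^{\beta_1\beta_2}$ (or more conveniently the corresponding convex function on $\R_+$), I need $\big(\int\sigma_z\,\rho(dz)\big)(f)\leq 0$. The atom of $\sigma_m$ at its rightmost point has mass $\geq\beta_2$, and it sits inside the contribution weighted by $\beta_1$; intuitively the "compound atom" has mass $\geq \beta_1\beta_2$ at the overall rightmost support point, which is exactly what the $\beta_1\beta_2$-biased condition needs. Making this precise is the main obstacle: one must verify that the rightmost point of $\supp(\int\sigma_z\,\rho(dz))$ is indeed $S(\sigma_m)$ (using that $\rho$ is simple, so $m$ is extremal, and that $\sigma_m$ is $\beta_2$-biased around $m$ which pushes its support rightward past that of the $\sigma_z$, $z<0$), and then compute the relevant test-function inequality, exploiting convexity of $f|_{\R_+}$, antisymmetry, and the barycenter identities $\bar\sigma_z = z$, $\bar\rho = 0$. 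The transform $Z^\beta$ and its multiplicative behaviour under composition (implicit in the relation $a(\beta_1\beta_2)$ versus $a(\beta_1)a(\beta_2)$) is what makes $\beta_1\beta_2$ the right exponent, and I would check that the scaling factors compose correctly; this bookkeeping with the distorted reflections $R^{\beta,x}$ is the delicate computational core.

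For the strict part, suppose without loss of generality $\nu_0 \prec_{s\beta_1}\nu_1$, so we may take $\pi^1\in\cplsbbc[\beta_1](\nu_0,\nu_1)$, meaning $\pi^1_x$ is \emph{strongly} $\beta_1$-biased around $x$ for $\nu_0$-a.e.\ $x$. By Lemma~\ref{lem:atomic_not_simple_implies_stronglybiased} and Definition~\ref{def:str_biased}, $\pi^1_x$ is a mixture of simple $\gamma$-biased probabilities with $\gamma = \gamma(x,\cdot) > \beta_1$; running the same gluing argument with $\beta_1$ replaced by such a $\gamma$ yields that $\hat\pi_x$ is a mixture of simple $\gamma\beta_2$-biased probabilities with $\gamma\beta_2 > \beta_1\beta_2$, hence strongly $\beta_1\beta_2$-biased around $x$, so $\hat\pi\in\cplsbbc[\beta_1\beta_2](\nu_0,\nu_2)$ and $\nu_0\prec_{s\beta_1\beta_2}\nu_2$. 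The case $\nu_1\prec_{s\beta_2}\nu_2$ is symmetric: there $\pi^2_{x_0}$ is a mixture of simple $\gamma$-biased probabilities with $\gamma > \beta_2$, and the same composition gives exponent strictly above $\beta_1\beta_2$. I expect the main obstacle throughout to be keeping the measurability of all the kernel decompositions intact (so that the glued kernels are genuinely measurable in the relevant variables), for which I would invoke Remark~\ref{rem:meausurability_of_decomposition}, and the careful verification that the rightmost support point and its atomic mass behave multiplicatively under the gluing — everything else is an application of the already-established test-function characterizations.
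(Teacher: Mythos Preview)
Your overall plan --- reduce to the pointwise statement ``$\rho$ is $\beta_1$-biased around $0$, $\sigma_z$ is $\beta_2$-biased around $z$, then $\int\sigma_z\,\rho(dz)$ is $\beta_1\beta_2$-biased around $0$'', then further reduce $\rho$ to a \emph{simple} $\beta_1$-biased probability via Lemma~\ref{lem:atomis_as_a_mixture_of_simple} --- is exactly the paper's strategy. The gap is in what you do after that reduction.

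You assert that once $\rho=\rho^L+\beta_1\delta_m$ is simple, the rightmost support point of the mixture $\nu_2:=\int\sigma_z\,\rho(dz)$ equals $S(\sigma_m)$, arguing that ``$\sigma_m$ is $\beta_2$-biased around $m$ which pushes its support rightward past that of the $\sigma_z$, $z<0$.'' This is false. A $\sigma_z$ with $z<0$ is only constrained to be $\beta_2$-biased around $z$; its rightmost support point is bounded by $z/\beta_2-s(\sigma_z)/a(\beta_2)$ (Lemma~\ref{lem:support^Restriction0}), which can be arbitrarily large if $s(\sigma_z)$ is very negative. Concretely, take $\beta_1=\beta_2=\tfrac12$, $\rho=\tfrac12\delta_{-1}+\tfrac12\delta_1$, $\sigma_1=\tfrac12\delta_0+\tfrac12\delta_2$, and
\[
\sigma_{-1}=\tfrac14\delta_{-6}+\tfrac14\delta_{-5}+\tfrac14\delta_3+\tfrac14\delta_4
\]
(a mixture of two simple $\tfrac12$-biased probabilities around $-1$). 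Then $\nu_2=\tfrac18\delta_{-6}+\tfrac18\delta_{-5}+\tfrac14\delta_0+\tfrac14\delta_2+\tfrac18\delta_3+\tfrac18\delta_4$ has $S(\nu_2)=4>2=S(\sigma_m)$, and the atom at $4$ has mass $\tfrac18<\tfrac14=\beta_1\beta_2$. So $\nu_2$ is \emph{not} atomic $\beta_1\beta_2$-biased, and your ``compound atom'' picture breaks down. The paper confronts exactly this obstruction: in its Step~1 it introduces the set $\Theta=\{x<0:S(\pi_x)>S(\pi_{x_1})\}$ of problematic points, decomposes each $\pi_x$ for $x\in\Theta$ into simple $\beta_2$-biased pieces, and for each piece whose top atom overshoots $S(\pi_{x_1})$ it \emph{recentres} a portion of that piece to barycenter $0$ (equation~\eqref{eq:thm_transitivity_eta_xu}). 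These recentred pieces are themselves strongly $\beta_2$-biased (hence $\beta_1\beta_2$-biased), and what remains after subtracting them from $\nu_2$ has its rightmost point back at $S(\pi_{x_1})$ with enough mass there. This surgical step is the real content of the proof, and it is absent from your proposal.

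Your fallback idea of proving the inequality directly via test functions in $\Psi^{\beta_1\beta_2}$ and a ``multiplicative behaviour of $Z^\beta$'' does not obviously rescue the argument either: note that $a(\beta_1\beta_2)=\frac{\beta_1\beta_2}{1-\beta_1\beta_2}$ while $a(\beta_1)a(\beta_2)=\frac{\beta_1\beta_2}{(1-\beta_1)(1-\beta_2)}$, so there is no clean composition law $Z^{\beta_1}\circ Z^{\beta_2}=Z^{\beta_1\beta_2}$ to exploit. As for the strict part, your idea of replacing $\beta_1$ by the pointwise $\gamma(x,u)>\beta_1$ is sound in spirit, but it presupposes the non-strict result is already established at the level of simple pieces; once the gap above is fixed, the strict statement indeed follows along these lines (and this is how the paper handles it, with the strictness entering through the inequality in \eqref{eq:lem_transitivity_step1_maxsup}).
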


\begin{proof} 
        Without loss of generality, let $\nu_0$ be centered at 0.
        Let $\pi \in \text{Cpl}_{\beta_2}(\nu_1, \nu_2)$, and denote its $\nu_1$-disintegration by $(\pi_x)_x$.

        \medskip \emph{Step 1: the statement holds for $\nu_0 = \delta_0$ and $\nu_1$ simple $\beta_1$-biased, if $\pi$ satisfies that $\pi_{x_1}$ is simple $\beta_2$-biased around $x_1$, where $x_1 := S(\nu_1)$.}\\ 
        Let $R_x := S(\pi_x)$. Note that $\nu_1(\{x_1\}) \geq \beta_1$ and $\pi_{x_1}(\{R_{x_1}\}) \geq \beta_2$, hence 
        \begin{equation}\label{eq:lem_transitivity_step1_maxsup}
        \nu_2(\{R_{x_1}\}) \geq \nu_1(\{x_1\}) \,\pi_{x_1}(\{R_{x_1}\})  \geq \beta_1 \beta_2. \end{equation}
        If $R_{x_1} = S(\nu_2)$, then $\nu_2$ is atomic $\beta_1 \beta_2$-biased, and the proof of this step is complete. Otherwise, we construct a representation of $\nu_2$ as a mixture of $\beta_1 \beta_2$-biased probabilities as follows. Consider the set $\Theta := \{x < 0 : R_x > R_{x_1} \}$ (see Figure \ref{fig:transitivity}). 

        \begin{figure}[H]
    \centering
    \includegraphics[width=0.45\textwidth]{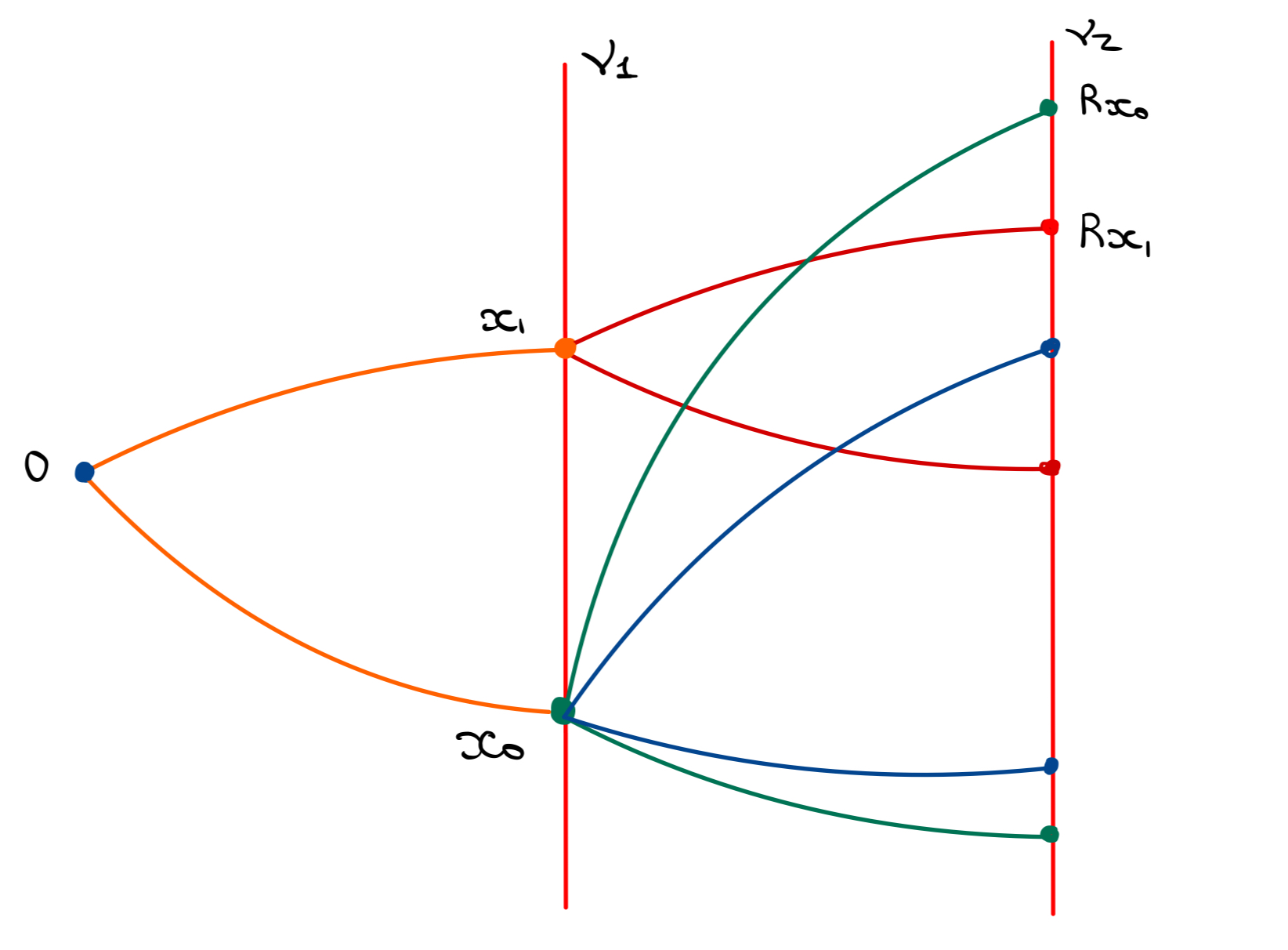}
    \caption{Illustration of Step 1 in the proof of Theorem~\ref{thm:transitivity}, where $\nu_0 = \delta_0$, $\Theta = \{x_0\}$ and $U_{x_0}=\{u_{0}\}$, with corresponding $\pi_{x_0, u_{0}}$ in green.}
   \label{fig:transitivity}
\end{figure}
Since for $\nu_1$-a.e.\ $x$ it holds that $\delta_x \prec_{\beta_2} \pi_x$, it follows from Lemma~\ref{lem:atomis_as_a_mixture_of_simple} and Remark~\ref{rem:meausurability_of_decomposition} that $\pi_x$ admits a representation as a mixture of simple $\beta_2$-biased probabilities centered at $x$, i.e., $
        \pi_x = \int \pi_{x,u} \, \lambda(du)$,
        where each $\pi_{x,u}$ is simple $\beta_2$-biased around $x$, and the map $(x,u) \mapsto \pi_{x,u}$ is measurable. For $x \in \Theta$, the set $U_x := \{ u : S(\pi_{x,u}) > R_{x_1} \}$ has positive Lebesgue measure since $$0 < \pi_x\big((R_{x_1}, +\infty)\big) = \int_{U_x} \pi_{x,u}\big((R_{x_1}, +\infty)\big) \, \lambda(du),$$
        and it holds that $\bar{\pi}_{x,u} = x < 0$. Then it is straightforward to see, for example, by Jankov--von Neumann uniformization theorem, that there exists an analytically measurable function $\varepsilon : \R^2 \to (0,1)$ such that, for $x \in \Theta$ and $u \in U_x$, the measure
        \begin{equation}\label{eq:thm_transitivity_eta_xu}
        \eta_{x,u} := \pi_{x,u}^R + (1 - \varepsilon(x,u))\pi_{x,u}^L
        \end{equation}
        satisfies $\bar{\eta}_{x,u} = 0$.
        Now, for $x \in \Theta$, set 
        $\eta_x := \int_{U_x} \eta_{x,u} \, \lambda(du)$
        and $\gamma := \int_{\Theta} \tm{\eta_x} \, \nu_1(dx)$. Furthermore, define\begin{equation}\label{eq:thm_transitivity_mu_x1}
        \mu_{x_1} := \frac{1}{1-\gamma} \left( \nu_2 - \int_{\Theta} \eta_x \, \nu_1(dx) \right),
        \end{equation}
        and set $\mu_x := \frac{\eta_x}{\tm{\eta_x}}$ for $x \in \Theta$. Note that $\mu_{x_1}$ is non-negative, since $\nu_2 = \int \int \pi_{x,u} \, \lambda(du) \, \nu_1(dx)$ and $\eta_x = \int_{U_x} \left(\pi_{x,u}^R + (1-\varepsilon(x,u))\pi_{x,u}^L\right)\, \lambda(du)$. Since each $\pi_{x,u}^R$ consists of a single atom with mass greater than or equal to $\beta_2$, from \eqref{eq:thm_transitivity_eta_xu} for $x \in \Theta$ it follows that $\frac{\eta_{x,u}}{\tm{\eta_{x,u}}}$ is strongly $\beta_2$-biased, and 
        $$
        \mu_x = \int_{U_x} \frac{\eta_{x,u}}{\tm{\eta_{x,u}}} \, \hat{\rho}_x(du),
        $$ 
        where $\hat{\rho}_x(du) := \frac{\tm{\eta_{x,u}}}{\tm{\eta_x}} \, \lambda(du)$, is a representation of $\mu_x$ as a mixture of strongly $\beta_2$-biased probabilities, hence $\delta_0 \prec_{s\beta_2} \mu_x$. From \eqref{eq:thm_transitivity_mu_x1} it follows that $\mu_{x_1}$ is centered at 0 and $S(\mu_{x_1}) = R_{x_1}$, since $\nu_2$ and $\eta_x$ are centered at 0, and 
        \begin{align*}
        \nu_2\bigl((R_{x_1}, +\infty)\bigr)
        &= \int_\Theta \int_{U_x} 
              \pi_{x,u}\bigl((R_{x_1}, +\infty)\bigr)\,
              \lambda(du)\,\nu_1(dx) \\
        &= \int_\Theta 
              \eta_x\bigl((R_{x_1}, +\infty)\bigr)\,
              \nu_1(dx).
        \end{align*}
        From \eqref{eq:lem_transitivity_step1_maxsup} and \eqref{eq:thm_transitivity_mu_x1} it follows that 
        $$
        \mu_{x_1}(\{R_{x_1}\}) = \frac{\nu_2(\{R_{x_1}\})}{1-\gamma} > \beta_1 \beta_2,
        $$
        implying that $\mu_{x_1}$ is strongly $\beta_1 \beta_2$-biased, and
        \begin{equation}\label{eq:lemma_transitivity_decompisition_to_strict}
                \nu_2 = (1-\gamma) \mu_{x_1} + \gamma \int_\Theta \mu_x \, \rho(dx),
        \end{equation}
        where $\rho(dx) = \frac{\tm{\eta_x} \, \nu_1(dx)}{\gamma}$, is a decomposition of $\nu_2$ into a mixture
        of strongly $\beta_1 \beta_2$-biased probability measures. Therefore, in the case $R_{x_1} < S(\nu_2)$ we even have a strong order $\delta_0 \prec_{s \beta_1 \beta_2} \nu_2$.

        \medskip 
        \emph{Step 2:  the statement holds for $\nu_0 = \delta_0$ and $\nu_1$ simple $\beta_1$-biased.}\\
        Since $\delta_{x_1} \prec_{\beta_2} \pi_{x_1}$, it follows from Lemma~\ref{lem:atomis_as_a_mixture_of_simple} and Remark~\ref{rem:meausurability_of_decomposition} that we can measurably decompose $\pi_{x_1}$ into a mixture of simple $\beta_2$-biased probabilities centered at $x_1$, i.e., $\pi_{x_1} = \int \pi_{x_1, u} \, \lambda(du)$, where each $\pi_{x_1, u}$ is simple $\beta_2$-biased around $x_1$. For $x < x_1$, we set $ \pi_{x,u} := \pi_x$ for any $u$, so that
        $$
        \nu_2 = \int \int  \pi_{x,u} \, \nu_1(dx) \, \lambda(du).
        $$
        For $\nu_{2,u} := \int  \pi_{x,u} \, \nu_1(dx)$, we have $\nu_1 \prec_{\beta_2} \nu_{2,u}$, hence from Step 1 we get $\delta_0 \prec_{\beta_1 \beta_2} \nu_{2,u}$. Since $\nu_2 = \int \nu_{2,u} \, \lambda(du)$, it follows that $\delta_0 \prec_{\beta_1 \beta_2} \nu_2$.
        
        \medskip \emph{Step 3:  the statement holds in the general setting.}
        \\[0.1cm]
        Let $\hat{\pi} \in \text{Cpl}_{\beta_1}(\nu_0, \nu_1)$, and denote its $\nu_0$-disintegration by $(\hat{\pi}_{x_0})_{x_0}$. As above, for a.e. $x_0$, we can measurably decompose $\hat{\pi}_{x_0}$ into a mixture of simple $\beta_1$-biased probabilities, i.e., $\hat{\pi}_{x_0} = \int \hat{\pi}_{x_0, u} \, \lambda(du)$, where each $\hat{\pi}_{x_0, u}$ is simple $\beta_1$-biased around $x_0$, and $(x_0,u)\mapsto \hat \pi_{x_0,u}$ is measurable. Since $\pi \in \text{Cpl}_{\beta_2}(\nu_1, \nu_2)$, we have
        \begin{align*}
        \nu_2
        &= \int \pi_x \,\nu_1(dx)
         = \int \!\int \pi_x \,\hat{\pi}_{x_0}(dx)\,\nu_0(dx_0) \\
        &= \int \!\int \!\int \pi_x \,\hat{\pi}_{x_0,u}(dx)\,\lambda(du)\,\nu_0(dx_0).
        \end{align*}
        Let $\nu_{2, x_0, u} := \int \pi_x \, \hat{\pi}_{x_0, u}(dx)$ and 
        $\nu_{2, x_0} := \int \nu_{2, x_0, u} \, \lambda(du)$. Since $\delta_x \prec_{\beta_2} \pi_x$ and $\hat{\pi}_{x_0, u}$ is simple $\beta_1$-biased around $x_0$, it follows from Step 2 that $\delta_{x_0} \prec_{\beta_1 \beta_2} \nu_{2, x_0, u}$, and thus $\delta_{x_0} \prec_{\beta_1 \beta_2} \nu_{2, x_0}$. Therefore, since $\nu_2 = \int \nu_{2, x_0} \, \nu_0(dx_0)$, we conclude that $\nu_0 \prec_{\beta_1 \beta_2} \nu_2$.

        The proof of the assertion for the strict order follows by repeating the above proof, with the only modification being that, in Step 1, the second inequality in \eqref{eq:lem_transitivity_step1_maxsup} becomes strict.
\end{proof}

The following lemma provides two useful sufficient conditions for a probability measure to be strongly $\beta$-biased. 
\begin{lemma}\label{lem:sufficient_conditions_strict_order}
    Let $\beta_1, \beta_2 \in (0,1)$ and let $\nu_1, \nu_2 \in \mathcal{P}_1(\mathbb{R})$, with $\nu_1$ atomic $\beta_1$-biased. Suppose that $\nu_1 \prec_{\beta_2} \nu_2$, and that there exists $\pi \in \mathrm{Cpl}_{\beta_2}(\nu_1, \nu_2)$ such that $\pi_{x_1}$ is simple $\beta_2$-biased around $x_1 := S(\nu_1)$. If one of the following holds:
    \begin{enumerate}
        \item $S(\pi_{x_1}) < S(\nu_2)$, 
        \item $\pi_{x_1}\big(\{S(\pi_{x_1})\}\big) > \beta_2$,
    \end{enumerate}
    then $\delta_0 \prec_{s\beta_1 \beta_2} \nu_2$.
\end{lemma}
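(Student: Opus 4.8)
The plan is to reduce the statement to the construction already carried out in Step~1 of the proof of Theorem~\ref{thm:transitivity}, after first decomposing $\nu_1$ into simple pieces. Write $x_1:=S(\nu_1)$. If $\nu_1=\delta_0$ then also $\nu_2=\delta_0$ (it is simple $1$-biased) and there is nothing to prove; otherwise $\bar\nu_1^L<0$ and I decompose $\nu_1=\int \nu_{1,x'}\,\rho(dx')$ into simple $\beta_1$-biased probabilities exactly as in \eqref{eq:decomposition_to_simple_atomic}, so that $\rho$ is concentrated on $[0,x_1]$ and $\nu_{1,x'}=\frac{1-B_{x'}}{\tm{\nu_1^L}}\nu_1^L+B_{x'}\delta_{x'}$ with $B_{x'}=\frac{-\bar\nu_1^L}{x'-\bar\nu_1^L}$. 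Gluing with $\pi$ gives $\nu_2=\int \nu_{2,x'}\,\rho(dx')$, where $\nu_{2,x'}:=\int \pi_z\,\nu_{1,x'}(dz)$, and the corresponding restriction of $\pi$ is a $\beta_2$-biased martingale coupling of $\nu_{1,x'}$ and $\nu_{2,x'}$ whose component at $S(\nu_{1,x'})=x'$ equals $\pi_{x'}$. Since a mixture of strongly $\beta_1\beta_2$-biased probabilities (around $0$) is again strongly $\beta_1\beta_2$-biased, with the underlying decompositions chosen measurably as in the proof of Theorem~\ref{thm:transitivity}, it suffices to show that $\nu_{2,x'}$ is strongly $\beta_1\beta_2$-biased for $\rho$-a.e.\ $x'$.

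The key elementary observation is that $x'\mapsto B_{x'}$ is strictly decreasing on $[0,x_1]$, while $B_{x_1}\ge\beta_1$ by the bound in the proof of Lemma~\ref{lem:atomis_as_a_mixture_of_simple}, and its equality case forces $B_{x_1}=\beta_1$ to occur only when $\nu_1=\nu_1^L+\beta_1\delta_{x_1}$ is simple $\beta_1$-biased with top atom of mass exactly $\beta_1$; in that exceptional situation $\rho=\delta_{x_1}$ and $\nu_2=\nu_{2,x_1}$. Hence for every $x'$ with $B_{x'}>\beta_1$ --- which includes all $x'<x_1$, and also $x'=x_1$ unless $\nu_1$ is of the just-mentioned exceptional form --- the measure $\nu_{1,x'}$ is simple $B_{x'}$-biased with $B_{x'}>\beta_1$, hence strongly $\beta_1$-biased, i.e.\ $\delta_0\prec_{s\beta_1}\nu_{1,x'}$; combining this with $\nu_{1,x'}\prec_{\beta_2}\nu_{2,x'}$ and the ``moreover'' part of Theorem~\ref{thm:transitivity} yields $\delta_0\prec_{s\beta_1\beta_2}\nu_{2,x'}$, as required.

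It remains to treat the exceptional case $\nu_1=\nu_1^L+\beta_1\delta_{x_1}$, where $\nu_2=\nu_{2,x_1}$ and the hypothesis supplies $\pi\in\mathrm{Cpl}_{\beta_2}(\nu_1,\nu_2)$ with $\pi_{x_1}$ simple $\beta_2$-biased around $x_1$; set $M:=S(\pi_{x_1})$, noting $M\le S(\nu_2)$ since $\nu_2\ge\beta_1\pi_{x_1}$. I would re-run the Step~1 construction from the proof of Theorem~\ref{thm:transitivity} verbatim. Under hypothesis~(1) we have $M<S(\nu_2)$, so we land in the branch $R_{x_1}<S(\nu_2)$ of that construction, which already outputs $\delta_0\prec_{s\beta_1\beta_2}\nu_2$. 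Under hypothesis~(2) we have $\pi_{x_1}(\{M\})>\beta_2$, so the second inequality in \eqref{eq:lem_transitivity_step1_maxsup} becomes strict and $\nu_2(\{M\})>\beta_1\beta_2$; then either $M=S(\nu_2)$, in which case $\nu_2$ is atomic $\beta_1\beta_2$-biased with $\tm{\nu_2^R}\ge\nu_2(\{M\})>\beta_1\beta_2$ and hence strongly $\beta_1\beta_2$-biased by Lemma~\ref{lem:atomic_not_simple_implies_stronglybiased}, or $M<S(\nu_2)$ and we conclude exactly as before. Assembling the pieces gives $\delta_0\prec_{s\beta_1\beta_2}\nu_2$. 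The only genuine work beyond this bookkeeping is organizing the (standard) joint measurability of the nested decompositions --- of $\nu_1$ into the $\nu_{1,x'}$, of the conditional laws $\pi_{x'}$, and of the Step~1 construction --- so that the final mixture representation of $\nu_2$ is legitimate; and the conceptual crux is simply to notice that hypotheses~(1) and (2) are precisely what rule out the single configuration in which the Step~1 construction would yield only the weak order $\delta_0\prec_{\beta_1\beta_2}\nu_2$.
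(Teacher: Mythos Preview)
Your argument is essentially the paper's: both proofs split into the case where $\nu_1$ is already strongly $\beta_1$-biased (then transitivity, Theorem~\ref{thm:transitivity}, finishes) and the residual case where $\nu_1$ is simple $\beta_1$-biased with top mass exactly $\beta_1$ (then Step~1 of the proof of Theorem~\ref{thm:transitivity} applies, and hypotheses (1)/(2) are exactly what force the \emph{strong} conclusion there). The only difference is cosmetic: the paper invokes Lemma~\ref{lem:atomic_not_simple_implies_stronglybiased} directly to obtain $\delta_0\prec_{s\beta_1}\nu_1$ whenever $\tm{\nu_1^R}>\beta_1$, whereas you inline the proof of that lemma via the explicit decomposition \eqref{eq:decomposition_to_simple_atomic} and the observation $B_{x'}>\beta_1$.

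One small slip: if $\nu_1=\delta_0$ it does \emph{not} follow that $\nu_2=\delta_0$. In that case $x_1=0$ and $\pi_{x_1}=\nu_2$, so hypothesis~(1) is impossible, while under hypothesis~(2) the measure $\nu_2$ is simple $\gamma$-biased with $\gamma>\beta_2>\beta_1\beta_2$, hence strongly $\beta_1\beta_2$-biased directly. This patches the edge case without affecting the rest of your argument.
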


\begin{proof}
    Suppose first that $S(\pi_{x_1}) < S(\nu_2)$ (see Figure \ref{fig:nu2_decomposition}). If $\tm{\nu_1^R} > \beta_1$, then $\delta_0 \prec_{s \beta_1} \nu_1$ by Lemma~\ref{lem:atomic_not_simple_implies_stronglybiased}, and Theorem \ref{thm:transitivity} implies that $\delta_0 \prec_{s \beta_1 \beta_2} \nu_2$. Otherwise, if $\nu_1$ is a simple $\beta_1$-biased probability, the result follows directly from Step 1 of the proof of Theorem \ref{thm:transitivity}, as  \eqref{eq:lemma_transitivity_decompisition_to_strict} provides a decomposition of $\nu_2$ into a mixture of strongly $\beta_1 \beta_2$-biased probabilities. 

    Next, suppose that $\pi_{x_1}\big(\{S(\pi_{x_1})\}\big) > \beta_2$. Then either $S(\pi_{x_1}) < S(\nu_2)$ or $S(\pi_{x_1}) = S(\nu_2)$. In the first case, the result follows from part 1 of the proof. In the second case, $\nu_2\big(\{S(\nu_2)\}\big) > \beta_1 \beta_2$, which directly implies that $\delta_0 \prec_{s \beta_1 \beta_2} \nu_2$.
\end{proof}

\begin{figure}[H]
    \centering
    \includegraphics[width=0.45\textwidth]{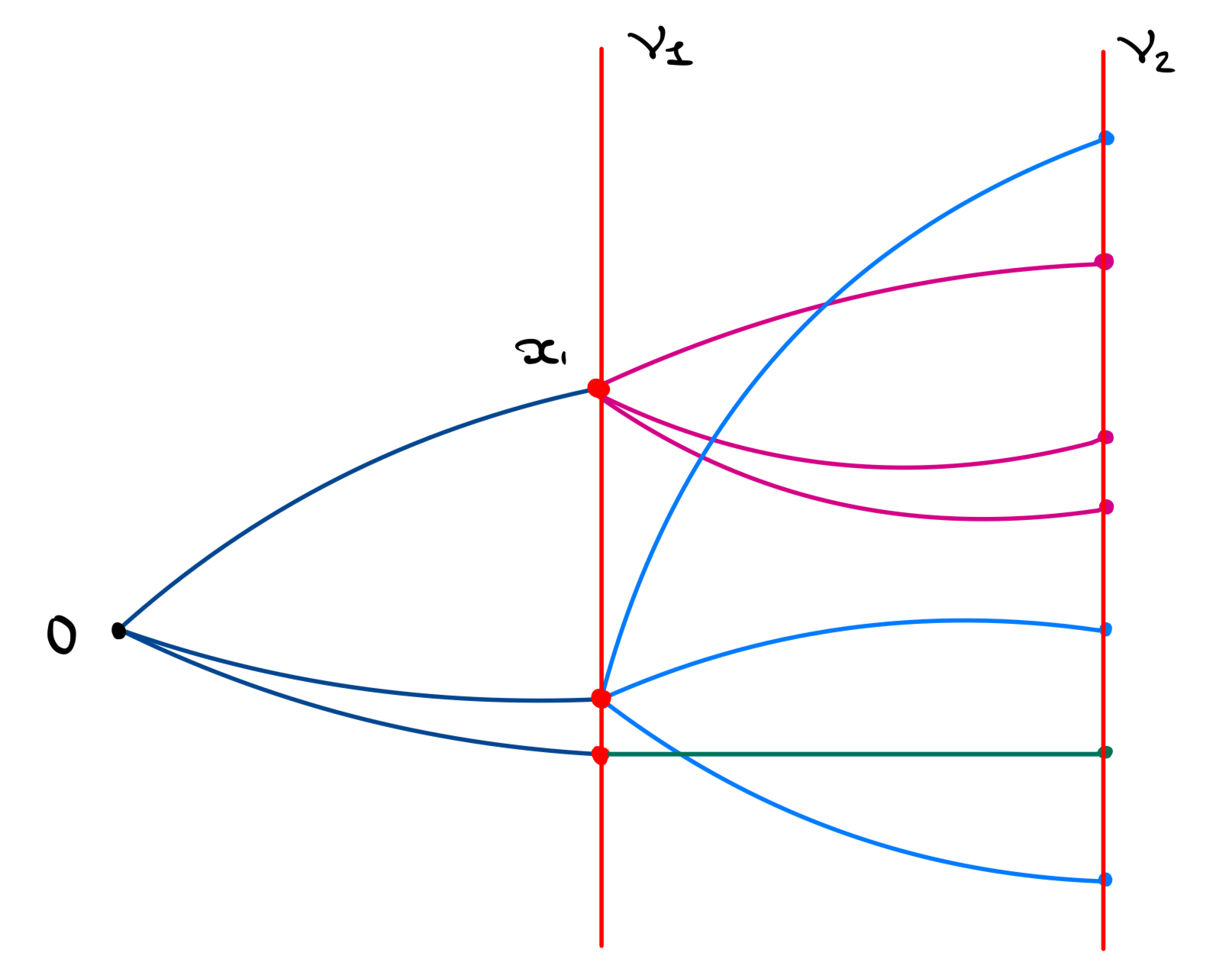} 
    \caption{$\max \supp(\pi_{x_1}) < \max \supp(\nu_2)$ implies $\delta_0 \prec_{s\beta_1 \beta_2} \nu_2$.}
    \label{fig:nu2_decomposition}
\end{figure}

\subsection{Integral characterization of $\beta$-biased order}
\begin{lemma}\label{lem:peackock_simple_integrand}
Let $X, H\in L^1(\F_0)$, $H\geq0$ and $Y=X+ H M_t=X+\int_0^t H \, dM_s$ for some $t>0$. Then 
$$ \law(X) \prec_{\beta_t} \law(Y), \quad \text{where } \beta_t = e^{-t}. $$ 
\end{lemma}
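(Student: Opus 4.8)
The plan is to verify directly that $\law(X)\prec_{\beta_t}\law(Y)$ by checking, for each starting point $x$ in the (a.s.\ well-defined) conditional picture, that $\law(Y\mid X=x)$ is $\beta_t$-biased around $x$, and then invoke Theorem~\ref{thm:bbc-Strassen_intro} (equivalently the observation right after it, that $\delta_x\prec_\beta\eta$ iff $\eta$ is $\beta$-biased around $x$). Concretely, since $X,H\in L^1(\F_0)$ and $M$ has $\F_0$-independent increments, conditionally on $\F_0$ the random variable $Y = X + H\,M_t = X + H(t - N_t)$ is, given $X=x$, $H=h$, distributed as $x + h(t-K)$ where $K\sim\mathrm{Poisson}(t)$. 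Thus $\law(Y\mid X=x,H=h)$ is the pushforward of a (shifted, scaled) Poisson law; I must show this measure is $\beta_t$-biased around $x$, and then average over the conditional law of $H$ given $X=x$ (a mixture of $\beta_t$-biased measures around $x$ is again $\beta_t$-biased around $x$, directly from Definition~\ref{Def: beta biased around x} and part (3) of Definition~\ref{def.bb}).

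So the core computation reduces to a single atomic measure: fix $x\in\R$, $h\ge 0$, and consider $\eta := \law\big(x + h(t-K)\big)$ with $K\sim\mathrm{Poisson}(t)$. If $h=0$ then $\eta=\delta_x$, which is trivially $\beta$-biased around $x$ for every $\beta$, so assume $h>0$. Then $\eta$ is supported on $\{x+ht, x+h(t-1), x+h(t-2),\dots\}$, its maximal support point is $S(\eta)=x+ht$ (attained at $K=0$), and $\bar\eta = x$ since $\E[K]=t$. The mass at the maximum is $\eta(\{S(\eta)\}) = \P(K=0) = e^{-t} = \beta_t$. Hence $\eta$ is an \emph{atomic $\beta_t$-biased probability around $x$} in the sense of Definition~\ref{def.bb}(2) (after translating by $-x$): $S(\eta)<\infty$ and $\eta(\{S(\eta)\})\ge\beta_t$. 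By Definition~\ref{def.bb}(3) every atomic $\beta_t$-biased probability is itself $\beta_t$-biased, so $\eta$ is $\beta_t$-biased around $x$.

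Assembling: for $\P$-a.e.\ realization, $\law(Y\mid \F_0)$ is $\beta_t$-biased around $X$ because conditioning further on $H$ gives the atomic measure $\eta$ above and $\law(Y\mid X)$ is the $\F_0$-average of these over $H$; by Lemma~\ref{lem:beta_biased_martingale} (or directly by Definition~\ref{def:bbc-coupling}), $\law(X,Y)\in\cplbbc\big(\law(X),\law(Y)\big)$ with $\beta=\beta_t$, and Theorem~\ref{thm:bbc-Strassen_intro} yields $\law(X)\prec_{\beta_t}\law(Y)$. I do not expect a serious obstacle here; the only point requiring a little care is the measurability/integrability bookkeeping — ensuring that $\law(Y\mid X=x)$ is a genuine kernel and that $Y\in L^1$ (which follows from $X,H\in L^1(\F_0)$ and $\E|M_t| = \E|t-N_t|<\infty$ together with independence of $H$ from $M_t$), so that the mixture representation of $\law(Y\mid X)$ over the conditional law of $H$ is legitimate. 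The conceptual content is simply that a compensated-Poisson increment, scaled by a nonnegative factor and started anywhere, produces exactly a mass $e^{-t}$ atom at the top of its (bounded-above) support — which is the defining feature of $\beta_t$-biasedness.
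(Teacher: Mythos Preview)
Your proposal is correct and follows essentially the same approach as the paper: both arguments condition on $(X,H)\in\F_0$, observe that $\law(x+hM_t)$ is atomic $\beta_t$-biased around $x$ because its maximum $x+ht$ is attained with probability $\P(N_t=0)=e^{-t}$, and then use that a mixture of $\beta_t$-biased probabilities around $x$ is again $\beta_t$-biased around $x$. The paper concludes slightly more directly from the mixture representation $\law(Y)=\int\int\law(x+hM_t)\,\rho_x(dh)\,\mu(dx)$ without explicitly invoking Theorem~\ref{thm:bbc-Strassen_intro}, but this is only a cosmetic difference.
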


\begin{proof}
    Denote by $\mu$ the distribution of $X$, by $\rho$ the joint distribution of $(X, H)$, and let $(\rho_x)_x$ be the $\mu$-disintegration of $\rho$. Since $M_t$ is independent of $\mathcal{F}_0$, we have
    $$
    \law(Y) = \int \int \law(x + h M_t) \, \rho_x(dh) \, \mu(dx),
    $$
    hence the assertion holds if we show that $\delta_x \prec_{\beta_t} \law(x + h M_t)$ for $\rho$-a.e. $(x,h)$.
    Now, for any $h \geq 0$, the random variable $x + h M_t = x + h t - h N_t$ has mean $x$ and attains its maximum value $x + h t$ when the Poisson process $N$ does not jump on the interval $(0, t]$. Thus, $\P[x + h M_t = x + h t] = e^{-t}$, which implies that $\law(x + h M_t)$ is atomic $\beta_t$-biased around $x$, with $\beta_t = e^{-t}$.
\end{proof}

\begin{corollary}\label{corollary: biased-peacock}
    Let $X_0 \in L^1(\F_0)$, and $H\geq0$ be a predictable process satisfying $\E\big[\int_0^T H_s ds\big]<\infty$. For every $t\in[0,T]$, let $X_t := X_0 + \int_0^t H_s \, dM_s$ and denote by $\nu_t$ its distribution. Then, for any $0\leq s < t \leq T$, we have 
    $$\nu_s \prec_{\beta_{t-s}} \nu_t,\qquad \text{where\; $\beta_{t-s} = e^{-(t-s)}$}.$$
\end{corollary}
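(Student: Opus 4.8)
The plan is to bootstrap Lemma~\ref{lem:peackock_simple_integrand}, which handles integrands in $L^1(\F_0)$, to arbitrary predictable integrands, in two steps: first treat \emph{elementary} integrands using the gluing result Theorem~\ref{thm:transitivity}, then pass to the general case by approximation together with $\W_1$-closedness of the $\beta$-biased order.

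\textbf{Step 1: elementary integrands.} Fix $0\le s<t\le T$ and suppose first that on $(s,t]$ the integrand has the form $H_u=\sum_{i=1}^n\xi_i\mathbf 1_{(t_{i-1},t_i]}(u)$ for a partition $s=t_0<\dots<t_n=t$ and non-negative, integrable $\xi_i\in L^1(\F_{t_{i-1}})$. On the $i$-th block $X_{t_i}=X_{t_{i-1}}+\xi_i\,(M_{t_i}-M_{t_{i-1}})$. Since $N$ has independent increments, $M'_v:=M_{t_{i-1}+v}-M_{t_{i-1}}$, $v\in[0,t_i-t_{i-1}]$, is a compensated Poisson process on the stochastic basis with $\F'_0:=\F_{t_{i-1}}$ (still rich enough), and Lemma~\ref{lem:peackock_simple_integrand} applied there with $X:=X_{t_{i-1}}$, $H:=\xi_i$ gives $\nu_{t_{i-1}}\prec_{e^{-(t_i-t_{i-1})}}\nu_{t_i}$. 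Iterating Theorem~\ref{thm:transitivity} over $i=1,\dots,n$ yields $\nu_s\prec_{\gamma}\nu_t$ with $\gamma=\prod_{i=1}^n e^{-(t_i-t_{i-1})}=e^{-(t-s)}=\beta_{t-s}$, which is the claim for elementary $H$.

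\textbf{Step 2: general integrands and limit.} For a general predictable $H\ge 0$ with $\E[\int_0^T H_u\,du]<\infty$, choose elementary predictable $H^{(n)}\ge 0$, supported on $(s,t]$ and coinciding with $H$ on $[0,s]$, with $\E[\int_s^t|H^{(n)}_u-H_u|\,du]\to 0$; such approximants exist by the standard density of elementary predictable processes in $L^1(\P\otimes du)$, and non-negativity is retained, e.g.\ by using backward block averages of $H$ (after truncating $H$ at level $m$ and letting $m\to\infty$). Put $X^{(n)}_r:=X_0+\int_0^rH^{(n)}_u\,dM_u$ and $\nu^{(n)}_r:=\law(X^{(n)}_r)$, so $\nu^{(n)}_s=\nu_s$. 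By the compensation formula (predictability of $H^{(n)}-H$) one has $\E[\int_s^t|H^{(n)}_u-H_u|\,dN_u]=\E[\int_s^t|H^{(n)}_u-H_u|\,du]$, hence
\[
    \E\big[|X^{(n)}_t-X_t|\big]\le \E\Big[\int_s^t|H^{(n)}_u-H_u|\,(du+dN_u)\Big]\xrightarrow[n\to\infty]{}0,
\]
so $\nu^{(n)}_t\to\nu_t$ in $\W_1$. By Step~1, $\nu_s=\nu^{(n)}_s\prec_{\beta_{t-s}}\nu^{(n)}_t$. Finally, $\{(\mu,\nu):\mu\prec_{\beta_{t-s}}\nu\}$ is $\W_1$-closed: by Theorem~\ref{thm:bbc-Strassen_intro} it equals $\{(\mu,\nu):T_c(\mu,\nu)=0\}$ for the weak transport cost $T_c$ built from the convex, lower semi-continuous, $[0,\infty]$-valued cost $c$ of the proof of Theorem~\ref{thm:bbc-Strassen_intro}, and $T_c$ is lower semi-continuous for $\W_1$ (tightness of near-optimal couplings plus lower semi-continuity of the weak transport functional, cf.\ \cite{BaBePa18}); passing to the limit gives $\nu_s\prec_{\beta_{t-s}}\nu_t$.

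\textbf{Main obstacle.} The delicate ingredient is the $\W_1$-closedness (stability) of the $\beta$-biased order, because a direct It\^o-type computation does \emph{not} produce the inequality at the level of integrands---a jump of $H$ immediately after a jump of $N$ can push $X_t$ far above its no-jump value, so the required atomic/mixture structure of $\law(X_t\mid X_s)$ only emerges in the limit of elementary approximations rather than pointwise. The remaining points---the conditional application of Lemma~\ref{lem:peackock_simple_integrand} on each block, the compensation-formula estimate, and the construction of non-negative elementary approximants---are routine.
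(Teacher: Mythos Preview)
Your proof is correct and follows the same two-step strategy as the paper: elementary integrands via Lemma~\ref{lem:peackock_simple_integrand} plus the gluing Theorem~\ref{thm:transitivity}, then approximation in $L^1(\P\otimes du)$ and passage to the limit using the compensation formula. The only notable difference is in how you pass to the limit: the paper simply uses the test-function definition of $\prec_\beta$, observing that $\nu_s^n(g_{\beta_{t-s}})\le \nu_t^n(g)$ for every $g\in C_L(\R)$ and letting $n\to\infty$ on both sides (which only needs $g$ and $g_\beta$ to integrate continuously against $\W_1$-convergent sequences); you instead invoke $\W_1$-lower semicontinuity of the weak transport cost $T_c$ from the proof of Theorem~\ref{thm:bbc-Strassen_intro}. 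Both are valid, but the paper's route is lighter and avoids the extra appeal to \cite{BaBePa18}. Your ``main obstacle'' paragraph slightly overstates the difficulty---once one has the test-function characterization, closedness of $\prec_\beta$ under $\W_1$-limits is immediate---but the observation that the natural coupling $\law(X_t\mid X_s)$ need not be $\beta$-biased for general predictable $H$ is correct and explains why the approximation step is genuinely needed.
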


\begin{proof}
    If $H$ is simple, i.e. $H_s = \sum_{i=1}^n H_{t_{i}} \mathbf{1}_{t_{i-1} < s \leq t_i}$, with $H_{t_i} \in L^1(\F_{t_{i-1}})$ non-negative and $0 \leq t_0 < \dots < t_n \leq T$, then from Lemma \ref{lem:peackock_simple_integrand} and Theorem \ref{thm:transitivity}, it follows that, for $s < t$, 
    $$
    \nu_{s} \prec_{\beta_{t-s}} \nu_{t}.
    $$
    Then, for any $H \geq 0$ such that $\E\left[\int_0^T H_s \, ds\right] < \infty$, we take a sequence of simple integrands $(H^n)_{n\in\N}$ such that $\E\left[\int_0^T |H_s - H_s^n| \, ds\right] \to 0$ as $n\to\infty$. Denote by $\nu^n_t$ the distribution of $X_0 + \int_0^t H_s^n \, dM_s$. Since 
    \begin{align*}
    \E\!\left[\Bigl|\int_0^T (H_s - H_s^n)\, dN_s\Bigr|\right]
    &\le \E\!\left[\int_0^T |H_s - H_s^n| \, dN_s\right] \\
    &= \E\!\left[\int_0^T |H_s - H_s^n| \, ds\right] \;\to 0, \quad n\to\infty,
    \end{align*}
    we obtain that $\nu^n_t \to \nu_t$ for every $t \in [0,T]$ as $n\to\infty$. From the first step of the proof, we know that $\nu^n_s \prec_{\beta_{t-s}} \nu^n_t$, and therefore for any $g \in C_L(\R)$ we have $\nu^n_s(g_{\beta_{t-s}}) \leq \nu^n_t(g)$. Since $\nu^n_t \to \nu_t$ and $\nu^n_s \to \nu_s$ in $\W_1$, it follows that for any $g \in C_L(\R)$ we have $\nu_s(g_{\beta_{t-s}}) \leq \nu_t(g)$, and thus $\nu_s \prec_{\beta_{t-s}} \nu_t$.  
\end{proof}

We are now in the position to prove our second main result (Theorem~\ref{thm:order_integral_Representation_Intro}), which provides a characterization of the $\beta$-biased order in terms of Poisson martingales. Here and throughout the rest of the paper, for $\beta \in (0,1)$, we denote $t_\beta := \log(1/\beta)$.

\begin{proof}[Proof of Theorem~\ref{thm:order_integral_Representation_Intro}]
    If \eqref{eq:thm_order^Representation} holds, then as an immediate consequence of Corollary~\ref{corollary: biased-peacock} we get that $\mu \bbc \nu$, and so $\cplbbc(\mu,\nu) \neq \emptyset$.    
    We now prove the converse implication. Without loss of generality, we assume that $\bar \mu = 0$. 
    Let $\mu \bbc \nu$ and consider a random variable $X_0 \in L^0(\F_0)$ such that $X_0 \sim \mu$. Our goal is to explicitly construct a non-negative process $H \geq 0$ such that the process $X$ defined in \eqref{eq:thm_order^Representation} satisfies $X_{t_\beta} \sim \nu$. 
    
    \medskip \emph{Step 1: Construction of $H$ for $\mu = \delta_0$ and a simple $\beta$-biased $\nu$.}\\
    First, we assume that $\nu^R$ consists of a single atom of mass $\beta$ at  $S(\nu)$, meaning $\nu = \nu^L + \beta \delta_{S(\nu)}$. We aim at finding a process $H$ of the form
    $$
        H_s = H(s) \, \mathbf{1}_{s \leq \tau_1},
    $$
    for some non-negative deterministic function $H(s)$, where $\tau_1 \sim \text{Exp}(1)$ is the  first jump time of the Poisson process $N$.
   This means that, before  $\tau_1$, $H_s$ follows the deterministic function $H(s)$, and it becomes zero afterwards. Then $X_{t_\beta}$ a.s. takes the form
    $$
    X_{t_\beta}  = 
    \begin{cases}
        \int_0^{t_\beta} H(s) \, ds, & N_{t_\beta} = 0,\\
        \int_0^{\tau_1} H(s) \, ds - H(\tau_1), & N_{t_\beta} > 0,
    \end{cases}
    $$
    or equivalently,
    $$
    X_{t_\beta}  = 
    \begin{cases}
        \int_0^{t_\beta} H(s) \, ds, & \tau_1 > t_\beta,\\
        \int_0^{\tau_1} H(s) \, ds - H(\tau_1), & \tau_1 \leq t_\beta.
    \end{cases}
    $$
    Since $\P[N_{t_\beta} = 0] = e^{-t_\beta} = \beta$ and $H \geq 0$, it follows that $X_{t_\beta} \sim \nu = \nu^L + \beta \delta_{S(\nu)}$ if and only if the following conditions hold:
\begin{equation}\label{eq:conditions_on_determ_H}
        \begin{split}
            \int_0^{t_\beta} H(s) \, ds &= S(\nu), \\ 
            \text{Law}\left(\int_0^{\tau_1} H(s) \, ds - H(\tau_1)\bigg| \, \tau_1 \leq t_\beta \right) &=  \frac{\nu^L}{1-\beta}.
        \end{split}
    \end{equation}
    Moreover, the first equation in \eqref{eq:conditions_on_determ_H} is redundant: if the second equation in \eqref{eq:conditions_on_determ_H} holds, then the first one follows from
    $$
    \beta S(\nu) + (1-\beta) \bar{\nu}^L = 0 = \E[X_{t_\beta}] = \beta \int_0^{t_\beta} H(s) \, ds + (1-\beta) \bar{\nu}^L.
    $$
     Therefore, $X_{t_\beta} \sim \nu = \nu^L + \beta \delta_{S(\nu)}$ holds if and only if
    \begin{equation}\label{eq:condition_on_determ_H}
        \text{Law}\left(\int_0^{\tau_1} H(s) \, ds - H(\tau_1) \bigg| \, \tau_1 \leq t_\beta \right) = \frac{\nu^L}{1-\beta}.
    \end{equation}
    Our goal now is to find a function $G$ such that $\text{Law}\big(G(\tau_1) |\, \tau_1 \leq t_\beta \big) = \frac{\nu^L}{1-\beta}$, and then solve the integral equation 
    $$
        \int_0^{t} H(s) \, ds - H(t) = G(t), \quad t \leq t_\beta,
    $$
    which, by virtue of \eqref{eq:condition_on_determ_H}, will in turn imply that $X_{t_\beta} \sim \nu$. Since $\tau_1 \sim \text{Exp}(1)$, the random variable $1 - e^{-\tau_1}$ is uniformly distributed on $[0,1]$, and $\text{Law}(1 - e^{-\tau_1} | \, \tau_1 \leq t_\beta) = \frac{\lambda|_{[0,1-\beta]}}{1-\beta}$. Let $Q_{\nu^L}$ denote the quantile function of $\nu^L$, defined as $Q_{\nu^L}(p) = \inf\{t: \nu^L((-\infty, t]) \geq p\}$. Then we have $\text{Law}\big(Q_{\nu^L}(1 - e^{-\tau_1}) |\, \tau_1 \leq t_\beta\big) = \frac{\nu^L}{1-\beta}$. Thus, if we find $H \geq 0$ satisfying 
\begin{equation}\label{eq:volterra_equation_on_H}
        \int_0^{t} H(s) \, ds - H(t) = Q_{\nu^L}\big(1 - e^{-t}\big), \quad t \leq t_\beta,
    \end{equation}
    then \eqref{eq:condition_on_determ_H} holds true, and hence $X_{t_\beta} \sim \nu$. Equation \eqref{eq:volterra_equation_on_H} is a linear Volterra equation of the second kind with a constant kernel, which is known to have the unique solution
    \begin{equation}\label{eq:explici_formula_H}
        H(t) = - Q_{\nu^L}(1 - e^{-t}) - \int_0^t e^{t-s} Q_{\nu^L}(1 - e^{-s}) \, ds.     
    \end{equation} 
    Thus, for the case when $\nu^R$ consists of a single atom of mass $\beta$, we have explicitly found an integrand $H \geq 0$ such that $X_{t_\beta} = \int_0^{t_\beta} H_s \, ds \sim \nu$.   
    Similarly, when $\nu^R$ consists of a single atom of mass $\gamma > \beta$, we use the same computations as above, but setting $H_t = 0$ for $t \in \big(\log(1/\gamma), t_\beta\big]$, i.e.
    $$
        H_t = H(s) \, \mathbf{1}_{s \leq \log(1/\gamma) \wedge \tau_1 },
    $$
    where the deterministic function $H(t)$ is given by \eqref{eq:explici_formula_H}. This provides the complete solution for the case of $\mu=\delta_0$ and a simple $\beta$-biased probability $\nu$.
       \begin{figure}[h]
    \centering
    \begin{subfigure}[t]{0.49\textwidth}
        \centering
        \includegraphics[width=\linewidth]{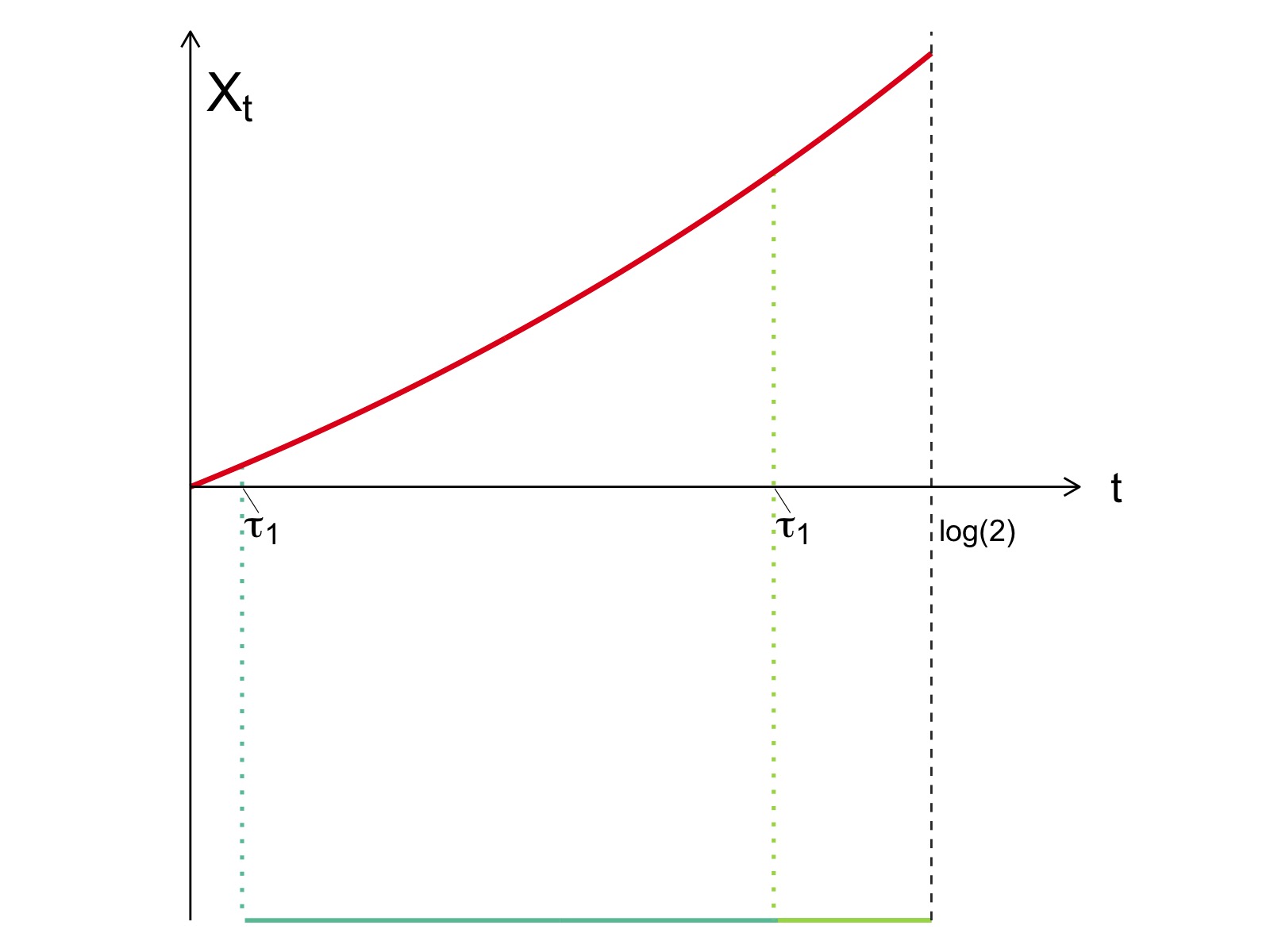}
        \caption{Trajectories of $X_t = \int_0^t H_s \, dM_s$ embedding \\  $\nu = \frac{1}{2} \delta_{-1} + \frac{1}{2} \delta_{1}$.}
        \label{fig:traj_3points}
    \end{subfigure}
    \hfill
    \begin{subfigure}[t]{0.49\textwidth}
        \centering
        \includegraphics[width=\linewidth]{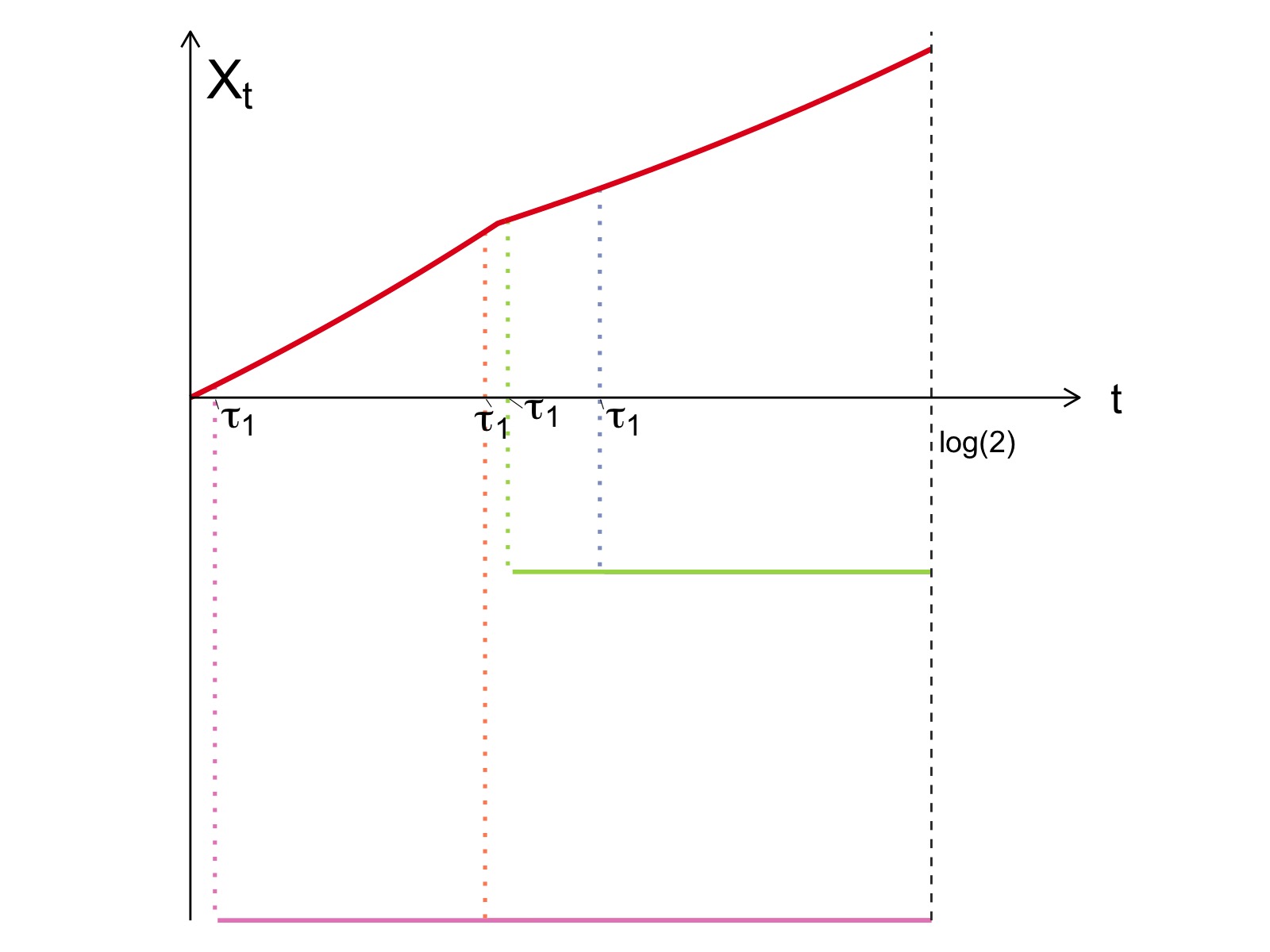}
        \caption{Trajectories of $X_t = \int_0^t H_s \, dM_s$ embedding \\ $\nu = \frac{1}{2} \delta_{2} + \frac{1}{4} \delta_{-1} + \frac{1}{4} \delta_{-3} $.}
        \label{fig:traj_2points}
    \end{subfigure}
    \caption{}
    \label{fig:Thm_1_2_2}
\end{figure}

    \medskip \emph{Step 2: Construction of $H$ for $\mu = \delta_0$ and a general $\beta$-biased $\nu$.}\\
    When $\nu$ is $\beta$-biased, by Lemma \ref{lem:atomis_as_a_mixture_of_simple} and Remark \ref{rem:meausurability_of_decomposition} we can represent $\nu$ as an average of simple $\beta$-biased probabilities  $(\nu_u)_u$, i.e., $\nu = \int \nu_u \, \lambda(du)$. By applying Step 1 to each $\nu_u$, we define $H^u$ such that $\int_0^{t_\beta} H_s^u \, dM_s \sim \nu_u$. It follows from \eqref{eq:explici_formula_H} and measurability of $u \mapsto \nu_u$ that the map $u \mapsto H^u$ is measurable. We then define an $\F_0$-measurable random variable $\xi$ with uniform distribution on $[0,1]$, and set $H := H^\xi$. Then the distribution of $X_{t_\beta}$ satisfies
    \begin{align*}
    \P[X_{t_\beta} \le z]
    &= \int \P\!\left[\int_0^{t_\beta} H_s^u \, dM_s \le z \right] \lambda(du) \\
    &= \int \nu_u\bigl((-\infty, z]\bigr)\, \lambda(du) \\
    &= \nu\bigl((-\infty, z]\bigr),
    \end{align*}
    so that $X_{t_\beta} \sim \nu$.

    \medskip \emph{Step 3: Construction of $H$ in the general setting.}\\
    Consider a coupling $\pi \in \cplbbc(\mu, \nu)$ and its $\mu$-disintegration $(\pi_x)_x$. Since $\delta_x \bbc \pi_x$, by applying  Step 2 to $\pi_x$, we can construct an integrand $H^x$ (which can be taken to be measurable by \eqref{eq:explici_formula_H} and Remark \ref{rem:meausurability_of_decomposition}) such that 
    $$ x + \int_0^{t_\beta} H^x_s \, dM_s \sim \pi_x. $$
    We then define the integrand $H := H^{X_0}$. Since $X_0$ is independent of the Poisson process $N$, we obtain
    \begin{align*}
    \P[X_{t_\beta} \le z]
    &= \int \P\!\left[\,x + \int_0^{t_\beta} H_s^x \, dM_s \le z \right] \mu(dx) \\
    &= \int \pi_x\bigl((-\infty, z]\bigr)\, \mu(dx) \\
    &= \nu\bigl((-\infty, z]\bigr),
    \end{align*}
    so that $X_{t_\beta} \sim \nu$. Thus, we have shown that if $\mu \bbc \nu$, then there exists a martingale $X$ such that $X_0 \sim \mu$, $X_{t_\beta} \sim \nu$, and $X_{t_\beta} = X_0 + \int_0^{t_\beta} H_s \, dM_s$ for some $H \geq 0$.
    \end{proof}

As established in the proof of Theorem~\ref{thm:order_integral_Representation_Intro},
when coupling probability distributions $\mu \prec_\beta \nu$ via a stochastic integral as in \eqref{eq:thm_order^Representation}, it suffices to consider integrands $H$ that become identically zero after $\tau_1$, the first jump time of the Poisson process $N$. 
Specifically, the following was shown.
\begin{corollary}\label{cor:integral_representation_simple_integrand}
    A pair of probability measures $\mu, \nu \in \Pc_1(\R)$ satisfies $\mu \bbc \nu$ if and only if there exists $X_0 \sim \mu$ and a predictable nonnegative process $(H_s)_{s \in [0, t_\beta]}$ of the form $H_s = \bar H(s, \bar{U}, X_0) \mathbf{1}_{s \leq \tau_1}$, where $\bar H$ is deterministic, $\bar{U} \in L^0(\F_0)$, and $\E\left[\int_0^{t_\beta} H_s \, ds \right] < \infty$, such that
    $$
    X_0 + \int_0^{t_\beta} H_s \, dM_s \sim \nu.
    $$ 
\end{corollary}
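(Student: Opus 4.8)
The plan is to read the statement off the construction performed in the proof of Theorem~\ref{thm:order_integral_Representation_Intro}, verifying that the integrand produced there always has the announced shape, and to obtain the converse from Corollary~\ref{corollary: biased-peacock}. Concretely, for the ``if'' direction I would argue as follows: given $X_0 \sim \mu$ and a predictable $H \geq 0$ of the stated form with $\E\big[\int_0^{t_\beta} H_s\,ds\big] < \infty$ such that $X_0 + \int_0^{t_\beta} H_s\,dM_s \sim \nu$, this $H$ is in particular a predictable nonnegative integrand, so Corollary~\ref{corollary: biased-peacock} applied with $T = t_\beta$, $s = 0$, $t = t_\beta$ (and using $\beta_{t_\beta} = e^{-t_\beta} = \beta$) immediately yields $\mu \bbc \nu$. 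No extra work is needed here; the form of $H$ plays no role in this direction.

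For the ``only if'' direction, assuming $\mu \bbc \nu$, I would revisit Steps~1--3 of the proof of Theorem~\ref{thm:order_integral_Representation_Intro} and keep track of the structure of the constructed $H$. In Step~1, for $\mu = \delta_0$ and a simple $\beta$-biased $\nu$ whose right atom has mass $\gamma \geq \beta$, the integrand is $H_s = H(s)\,\mathbf{1}_{s \leq \log(1/\gamma)\wedge\tau_1}$ with $H$ the deterministic function from \eqref{eq:explici_formula_H}; absorbing the deterministic cut-off $\mathbf{1}_{s \leq \log(1/\gamma)}$ into a deterministic $\bar H(s)$ already puts this in the form $\bar H(s)\,\mathbf{1}_{s \leq \tau_1}$. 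In Step~2, $\nu = \int \nu_u\,\lambda(du)$ is decomposed into simple $\beta$-biased pieces and $H = H^{\bar U}$ for an $\F_0$-measurable $\mathrm{Unif}[0,1]$ variable $\bar U$; by \eqref{eq:explici_formula_H} together with Remark~\ref{rem:meausurability_of_decomposition}, the map $(s,u)\mapsto \bar H(s,u)$ (now including the $u$-dependent cut-off at $\log(1/\gamma_u)$) is Borel, so $H_s = \bar H(s,\bar U)\,\mathbf{1}_{s \leq \tau_1}$. In Step~3, $H = H^{X_0}$, where $H^x$ comes from applying Step~2 to the disintegration $\pi_x$ of a coupling $\pi \in \cplbbc(\mu,\nu)$; using the \emph{same} uniform $\bar U$ for every $x$ gives $H^x_s = \bar H(s,\bar U,x)\,\mathbf{1}_{s \leq \tau_1}$ with $(s,u,x)\mapsto \bar H(s,u,x)$ Borel, and substituting $x = X_0$ yields $H_s = \bar H(s,\bar U,X_0)\,\mathbf{1}_{s \leq \tau_1}$ of the required form.

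The one point that needs care — and the place where I expect the only genuine obstacle — is arranging that a single $\F_0$-measurable $\mathrm{Unif}[0,1]$ variable $\bar U$, \emph{independent of $X_0$}, suffices to realize both the randomization over $u$ in Step~2 and its $x$-parametrized version in Step~3 simultaneously. This is exactly where the richness hypothesis on $\F_0$ enters: on a standard Borel probability space, an atomless $\sigma$-field supports not just one $\mathrm{Unif}[0,1]$ but a countable family of independent ones, so one may take $X_0 \sim \mu$ and $\bar U \sim \mathrm{Unif}[0,1]$ independent and $\F_0$-measurable. Granting this, conditioning on $X_0$ exactly as in Step~3 of the proof of Theorem~\ref{thm:order_integral_Representation_Intro} (and using $(X_0,\bar U) \perp N$, so that $\bar U$ is conditionally uniform and independent of $N$ given $X_0$) gives $X_0 + \int_0^{t_\beta} H_s\,dM_s \sim \nu$ with $H$ of the claimed form, completing the proof. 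Integrability $\E\big[\int_0^{t_\beta} H_s\,ds\big] < \infty$ is inherited from the corresponding bound established in the proof of Theorem~\ref{thm:order_integral_Representation_Intro}.
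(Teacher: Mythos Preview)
Your proposal is correct and follows essentially the same approach as the paper, which simply records the corollary as an immediate consequence of the explicit construction carried out in Steps~1--3 of the proof of Theorem~\ref{thm:order_integral_Representation_Intro} (the paper gives no separate argument). Your tracking of the form of $H$ through the three steps is accurate, and your care about taking $\bar U$ independent of $X_0$ via the richness hypothesis on $\F_0$ makes explicit a point the paper leaves implicit.
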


The following corollary characterizes the specific cases of simple $\beta$-biased and atomic $\beta$-biased probability measures via stochastic integral representations.
\begin{corollary}
\begin{enumerate}
    \item A probability measure $\nu \in \Pc_1(\R)$ with $\nu \ne \delta_x$ is simple $\beta$-biased around $x$ if and only if there exists a nonnegative function $H: [0, t_\beta] \to \R$ such that $\int_0^t H(s)\, ds - H(t) < 0$ for all $t \in (0, t_\beta]$, and
    $$
    x + \int_0^{t_\beta} H(s)\, \mathbf{1}_{s \leq \tau_1} \, dM_s \sim \nu.
    $$

    \item A probability measure $\nu \in \Pc_1(\R)$ is atomic $\beta$-biased around $x$ if and only if there exists a deterministic nonnegative function $H: [0, t_\beta] \to \R$ such that
    $$
    x + \int_0^{t_\beta} H(s)\, \mathbf{1}_{s \leq \tau_1} \, dM_s \sim \nu.
    $$
\end{enumerate}
\end{corollary}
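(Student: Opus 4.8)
The plan is to read both items off the explicit construction carried out in Step~1 of the proof of Theorem~\ref{thm:order_integral_Representation_Intro}, supplemented by a structural analysis of the process $X_t:=x+\int_0^tH(s)\mathbf{1}_{s\le\tau_1}\,dM_s$.

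\emph{The ``if'' directions.} Since $H$ is deterministic and $\int_0^{t_\beta}H(s)\,ds<\infty$ (finiteness is forced on $\{\tau_1>t_\beta\}$ by $X_{t_\beta}\in\Pc_1(\R)$), $X$ is a martingale, so $\bar\nu=\E[X_{t_\beta}]=x$. Writing $T:=x+\int_0^{t_\beta}H(s)\,ds$, one has $X_{t_\beta}=T$ on $\{\tau_1>t_\beta\}$ and $X_{t_\beta}=x+\int_0^uH(s)\,ds-H(u)$ on $\{\tau_1=u\le t_\beta\}$; since $H\ge0$, this gives $X_{t_\beta}\le T$ a.s.\ and $\P[X_{t_\beta}=T]\ge\P[\tau_1>t_\beta]=e^{-t_\beta}=\beta$, whence $S(\nu)=T<\infty$ and $\nu(\{S(\nu)\})\ge\beta$. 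Together with $\bar\nu=x$ this is precisely ``$\nu$ atomic $\beta$-biased around $x$'', which is the ``if'' part of item~2. For item~1 I would add the hypothesis $\int_0^tH\,ds-H(t)<0$ on $(0,t_\beta]$, which forces $H>0$ there; then $T>x$, every jump value lies strictly below $x$, and $T$ is attained only on the no-jump event, so $\nu|_{[\bar\nu,\infty)}=\beta\,\delta_T$ is a single atom and $\nu$ is simple $\beta$-biased around $x$ (with, necessarily, $\nu(\{S(\nu)\})=\beta$).

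\emph{The ``only if'' directions.} Translating, take $x=0$; if $\nu=\delta_0$ use $H\equiv0$. For item~2, given an atomic $\beta$-biased $\nu\neq\delta_0$, set $\gamma:=\nu(\{S(\nu)\})\in[\beta,1)$, $t_\gamma:=\log(1/\gamma)\le t_\beta$, and $\mu':=(\nu-\gamma\delta_{S(\nu)})/(1-\gamma)\in\Pc_1(\R)$, which is supported in $(-\infty,S(\nu))$. As in Step~1, put $G(t):=Q_{\mu'}\big((1-e^{-t})/(1-\gamma)\big)$ on $[0,t_\gamma]$, solve the linear Volterra equation $\int_0^tH(s)\,ds-H(t)=G(t)$ on $[0,t_\gamma]$ for $H$, and set $H\equiv0$ on $(t_\gamma,t_\beta]$. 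Conditionally on $\{\tau_1\le t_\gamma\}$ the variable $(1-e^{-\tau_1})/(1-\gamma)$ is uniform on $[0,1]$, so the quantile identity gives $\law(G(\tau_1)\mid\tau_1\le t_\gamma)=\mu'$; combined with $\P[\tau_1>t_\gamma]=\gamma$ and $0=\E[X_{t_\beta}]$ (which forces $\int_0^{t_\gamma}H\,ds=S(\nu)$), this yields $X_{t_\beta}\sim\gamma\delta_{S(\nu)}+(1-\gamma)\mu'=\nu$. For item~1, $\nu$ is then simple $\beta$-biased around $0$ with $\nu(\{S(\nu)\})=\beta$ (by the previous paragraph), so $\mu'=\nu^L/(1-\beta)$, $t_\gamma=t_\beta$, and the above is exactly the construction of Step~1; one only redefines $H$ at the single point $t_\beta$ so that $G(t_\beta)<0$ holds as well, which changes neither $\int_0^{t_\beta}H\,ds$ nor $\law(X_{t_\beta})$ since $\{\tau_1=t_\beta\}$ is $\P$-null.

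\emph{Main obstacle.} The delicate step is the non-negativity of $H$ in item~2: unlike in Step~1, where $\nu^L$ lives in $(-\infty,0)$ and $G=Q_{\nu^L}\le0$ makes the closed-form Volterra solution manifestly non-negative, here $\mu'$ may charge $(0,S(\nu))$ and $G$ can be positive. I would handle this by integrating by parts in the Volterra solution to write $H(t)=e^{t}\Phi(t)$ with $\Phi(t):=-G(0)-\int_{(0,t]}e^{-s}\,dG(s)$; since $G$ is non-decreasing, $\Phi$ is non-increasing, and since $\int_0^{t_\gamma}H\,ds=S(\nu)\ge S(\mu')=G(t_\gamma)$ one gets $\Phi(t_\gamma)=e^{-t_\gamma}\big(S(\nu)-G(t_\gamma)\big)=\gamma\big(S(\nu)-S(\mu')\big)\ge0$, hence $\Phi\ge\Phi(t_\gamma)\ge0$ on $[0,t_\gamma]$, so $H\ge0$ on $[0,t_\gamma]$ while $H\equiv0$ on $(t_\gamma,t_\beta]$. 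A secondary (bookkeeping) point is the one flagged above: the strict-drift hypothesis on all of $(0,t_\beta]$ is compatible only with $\nu(\{S(\nu)\})=\beta$, which is why in item~1 there is no terminal subinterval on which $H$ vanishes and the endpoint adjustment of $H$ is needed instead.
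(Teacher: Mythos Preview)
Your approach is the same as the paper's: both directions are read off the explicit Step~1 construction from Theorem~\ref{thm:order_integral_Representation_Intro}, and the structural analysis of $X_{t_\beta}$ on the jump/no-jump events is identical. Two points are worth noting. First, your integration-by-parts argument for $H\ge 0$ in item~2 (writing $H(t)=e^t\Phi(t)$ with $\Phi$ non-increasing and $\Phi(t_\gamma)=\gamma(S(\nu)-S(\mu'))\ge 0$) is genuinely needed and is \emph{not} in the paper, which simply writes down the Volterra formula without checking the sign when $\nu|_{(-\infty,S(\nu))}$ may charge $[0,S(\nu))$; your argument fills this gap cleanly. Second, your flagged ``bookkeeping'' point about item~1 is a real observation: the strict-drift condition on all of $(0,t_\beta]$ forces $\nu(\{S(\nu)\})=\beta$ exactly (every jump value lands strictly below $x$, so $\nu^R$ has mass exactly $\P[\tau_1>t_\beta]=\beta$), whereas the definition of ``simple $\beta$-biased'' allows mass $\ge\beta$. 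The paper's proof implicitly treats only the equality case by pointing to \eqref{eq:explici_formula_H}, so your restriction to $\gamma=\beta$ and the endpoint adjustment at $t_\beta$ are the right moves; just be aware that the ``only if'' direction of item~1 as literally stated does not hold when the atom has mass strictly greater than $\beta$.
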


\begin{proof}
To see the first claim, suppose that $\nu$ is simple $\beta$-biased around $x$. The corresponding integrand is given in \eqref{eq:explici_formula_H}. Conversely, given such a function $H$, Theorem~\ref{thm:order_integral_Representation_Intro} yields $\delta_x \prec_\beta \nu$, and

$$
\int_0^{t_\beta} H(s)\, \mathbf{1}_{s \leq \tau_1} \, dM_s =
\begin{cases}
\int_0^{t_\beta} H(s)\, ds, & \tau_1 > t_\beta, \\
\int_0^{\tau_1} H(s)\, ds - H(\tau_1), & \tau_1 \leq t_\beta,
\end{cases}
$$
so that $\nu^R$ consists of a single atom at $x + \int_0^{t_\beta} H(s)\, ds$, with mass at least $\P[\tau_1 > t_\beta] = \beta$.%

To see the second claim, suppose that $\nu$ is atomic $\beta$-biased around $x$ with $\nu(\{S(\nu)\}) = \gamma \geq \beta$, and define
\[
Q(p) := \inf\left\{t: \nu\big((-\infty, t] \cap (-\infty, S(\nu))\big) \geq p\right\}.
\]
Set
$$
H(t) := 
\begin{cases}
-Q(1 - e^{-t}) - \int_0^t e^{t-s} Q(1 - e^{-s})\, ds, & t \in [0, \log(1/\gamma)], \\
0, & t > \log(1/\gamma).
\end{cases}
$$
Then, as in the proof of Theorem~\ref{thm:order_integral_Representation_Intro},
$$
\text{Law}\left(x + \int_0^{\tau_1} H(s)\, ds - H(\tau_1) \Big| \tau_1 \leq \log(1/\gamma) \right) = \frac{\nu|_{(-\infty, S(\nu))}}{1-\gamma},
$$
and therefore $x + \int_0^{t_\beta} H(s)\, \mathbf{1}_{s \leq \tau_1} \, dM_s \sim \nu$.

Conversely, if $H \geq 0$ is deterministic, then
$
x + \int_0^{t_\beta} H(s)\, \mathbf{1}_{s \leq \tau_1} \, dM_s \leq x + \int_0^{t_\beta} H(s)\, ds,
$
and
$$
\P\left[x + \int_0^{t_\beta} H(s)\, \mathbf{1}_{s \leq \tau_1} \, dM_s = x + \int_0^{t_\beta} H(s)\, ds\right] \geq \beta.
$$
Hence, $S(\nu) = x + \int_0^{t_\beta} H(s)\, ds$ and $\nu(\{S(\nu)\}) \geq \beta$, so that $\nu$ is atomic $\beta$-biased around $x$.
\end{proof}

We conclude this section with a corollary that will be used in the proof of Theorem~\ref{thm:strict_order_integral_Representation_Intro}.

\begin{proposition}\label{prop:extension_of_integrand}
    Let $X_t = \int_0^t H_s \mathbf{1}_{s\leq\tau} \, dM_s$ for $t \in [0, T]$, where $H \geq 0$ is predictable and satisfies 
    $\mathbb{E}\left[\int_0^T H_s \, ds\right] < \infty$, and $\tau$ is a stopping time. 
    Then, for any $t_0 < T$, there exists an extension $(\widetilde{\Omega}, \widetilde{\F}, \widetilde{\P})$ of the probability space, equipped with a filtration $\widetilde{\mathbb{F}}=(\widetilde{\F}_s)_{s \in [0, T]}$, on which we can define processes $(\widetilde{H}_s)_{s \in [0, T]}$, $(\widetilde{M}_s)_{s \in [0, T]}$, and a stopping time $\widetilde{\tau}$, such that:
    \begin{enumerate}
        \item $\widetilde{M}$ is a $(\widetilde{\P},\widetilde{\F})$-negative compensated Poisson process,
        \item $\widetilde{H} \geq 0$ is $\widetilde{\mathbb{F}}$-predictable 
        and satisfies $\mathbb{E}_{\widetilde{\P}}\left[\int_0^T \widetilde{H}_s \, ds\right] < \infty$,
        \item For $s \in [0, t_0]$, it holds that $\widetilde{H}_s \mathbf{1}_{s \leq \widetilde{\tau}} = \bar{H}(s, \bar{U}) \mathbf{1}_{s \leq \widetilde{\tau}_1}$ for some deterministic function $\bar{H}$, $\bar{U} \in L^0(\widetilde{\mathcal{F}}_0)$, and where $\widetilde{\tau}_1$ is the first jump time of $\widetilde{M}$,
        \item For $Y_t := \int_0^t \widetilde{H}_s \mathbf{1}_{s \leq \widetilde{\tau}}\, d\widetilde{M}_s$, it holds that
    $Y_{t_0} \overset{d}{=} X_{t_0}$, and for $\law(X_{t_0})$-a.e. $x$,
    \begin{align*}
    \law_{\,\P}\!\big((H_s, M_s - M_{t_0})_{s\in[t_0,T]},\; \tau \vee t_0 \mid X_{t_0}=x\big) \\
    = \law_{\,\widetilde{\P}}\!\big((\widetilde{H}_s, \widetilde{M}_s - \widetilde{M}_{t_0})_{s\in[t_0,T]},\;
       \widetilde{\tau} \vee t_0 \mid Y_{t_0}=x\big).
    \end{align*}
    \end{enumerate}
\end{proposition}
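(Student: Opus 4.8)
The plan is to replace the restriction of $X$ to $[0,t_0]$ by a stochastic integral of the special form produced by Corollary~\ref{cor:integral_representation_simple_integrand}, and then to glue back the original post-$t_0$ behaviour through a regular conditional distribution given $X_{t_0}$. Concretely, set $\mu_0:=\law(X_{t_0})$. Since $s\mapsto H_s\mathbf 1_{s\le\tau}$ is nonnegative, predictable and satisfies $\E[\int_0^T H_s\mathbf 1_{s\le\tau}\,ds]<\infty$, Corollary~\ref{corollary: biased-peacock} (with $s=0$, $t=t_0$) gives $\delta_0\prec_{e^{-t_0}}\mu_0$, so Corollary~\ref{cor:integral_representation_simple_integrand} (with $\beta=e^{-t_0}$, hence $t_\beta=t_0$) supplies a basis $(\Omega_1,\mathcal A_1,\mathbb P_1,(\mathcal A_s)_{s\in[0,t_0]})$ carrying a negative compensated Poisson process $(\widetilde M_s)_{s\in[0,t_0]}$ with first jump time $\widetilde\tau_1$, some $\bar U\in L^0(\mathcal A_0)$ and a deterministic $\bar H\ge0$, such that $\widetilde H_s:=\bar H(s,\bar U)\mathbf 1_{s\le\widetilde\tau_1}$ satisfies $\E_{\mathbb P_1}[\int_0^{t_0}\widetilde H_s\,ds]<\infty$ and $Y_{t_0}:=\int_0^{t_0}\widetilde H_s\,d\widetilde M_s\sim\mu_0$.

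Next I would carry out the gluing. Regard $\xi:=\bigl((H_s,M_s-M_{t_0})_{s\in[t_0,T]},\tau\vee t_0\bigr)$ as a random element of a Polish path space $\mathcal E$ (recording the post-$t_0$ trajectories of $M$, of a predictable version of $H$, and the value of $\tau\vee t_0$), with generic point $e=\bigl((\mathsf h_s,\mathsf m_s)_{s\in[t_0,T]},\mathsf t\bigr)$, and let $(Q_x)_x$ be a regular conditional law of $\xi$ given $X_{t_0}$, so $\law_{\P}(X_{t_0},\xi)=\mu_0(dx)\otimes Q_x(de)$. On $\widetilde\Omega:=\Omega_1\times\mathcal E$ set $\widetilde{\mathbb P}(d\omega_1,de):=\mathbb P_1(d\omega_1)\,Q_{Y_{t_0}(\omega_1)}(de)$, extend $\widetilde M_s:=\widetilde M_{t_0}+\mathsf m_s$ and $\widetilde H_s:=\mathsf h_s$ to $s\in(t_0,T]$, put $\widetilde\tau:=\mathsf t\ (\ge t_0)$, and let $\widetilde{\mathbb F}=(\widetilde{\mathcal F}_s)$ be given by $\widetilde{\mathcal F}_s=\mathcal A_s$ for $s\le t_0$ and $\widetilde{\mathcal F}_s=\mathcal A_{t_0}\vee\sigma\bigl((\mathsf h_u,\mathsf m_u,\mathbf 1_{\mathsf t\le u})_{u\in[t_0,s]}\bigr)$ for $s\in[t_0,T]$, suitably augmented. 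Properties 3 and 4 are then essentially built in: for $s\le t_0$ one has $\widetilde\tau\ge t_0\ge s$, so $\widetilde H_s\mathbf 1_{s\le\widetilde\tau}=\widetilde H_s=\bar H(s,\bar U)\mathbf 1_{s\le\widetilde\tau_1}$, whence $\int_0^{t_0}\widetilde H_s\mathbf 1_{s\le\widetilde\tau}\,d\widetilde M_s=Y_{t_0}\sim\law(X_{t_0})$; and conditionally on $Y_{t_0}=x$ the post-$t_0$ triple $\bigl((\widetilde H_s,\widetilde M_s-\widetilde M_{t_0})_{s\in[t_0,T]},\widetilde\tau\vee t_0\bigr)$ is exactly $e\sim Q_x=\law_{\P}(\xi\mid X_{t_0}=x)$. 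Property 2 follows by splitting $\E_{\widetilde{\mathbb P}}[\int_0^T\widetilde H_s\,ds]$ at $t_0$ (each piece being finite), with $\widetilde{\mathbb F}$-predictability of $\widetilde H$ inherited from $\mathbb P_1$ on $[0,t_0]$ and, on $(t_0,T]$, coming from the fact that $H$ is $(\mathcal F_s)$-predictable and adapted to the subfiltration $\sigma(X_{t_0})\vee\sigma\bigl((M_u-M_{t_0},H_u,\mathbf 1_{\tau\vee t_0\le u})_{u\in[t_0,\cdot]}\bigr)$ that $Q$ encodes.

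The substantive point is property 1: re-randomizing the first piece must not destroy the Poisson structure of $\widetilde M$. The mechanism is that in the original model $(M_{t_0+u}-M_{t_0})_{u\ge0}$ is independent of $\mathcal F_{t_0}$ and is a negative compensated Poisson process, so its conditional law given the $\mathcal F_{t_0}$-measurable variable $X_{t_0}$ equals that law for \emph{every} value of $X_{t_0}$. Hence under $\widetilde{\mathbb P}$ the increments $(\widetilde M_s-\widetilde M_{t_0})_{s\in[t_0,T]}$ are independent of $\widetilde{\mathcal F}_{t_0}$ with the right law, so $\widetilde N_s:=s-\widetilde M_s$ is a.s.\ an adapted counting process on $[0,T]$; and $\widetilde M$ is a $\widetilde{\mathbb F}$-martingale, since for $r\le t_0\le s$ this reduces to $\E[M_s-M_{t_0}\mid X_{t_0}]=0$, while for $t_0\le r\le s$ the conditional expectation $\E_{\widetilde{\mathbb P}}[\widetilde M_s-\widetilde M_r\mid\widetilde{\mathcal F}_r]$ equals, by the choice of $Q$, the original $\E_{\P}[M_s-M_r\mid X_{t_0},(M_u-M_{t_0},H_u,\mathbf 1_{\tau\vee t_0\le u})_{u\in[t_0,r]}]=0$ (an $(\mathcal F_s)$-martingale remains a martingale for any subfiltration to which it is adapted). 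Watanabe's characterization then upgrades ``$\widetilde N$ adapted counting process with $\widetilde N_t-t$ a $\widetilde{\mathbb F}$-martingale'' to ``$\widetilde N$ a $\widetilde{\mathbb F}$-Poisson process on $[0,T]$'', i.e.\ $\widetilde M$ is a $(\widetilde{\mathbb P},\widetilde{\mathbb F})$-negative compensated Poisson process; finally $\widetilde\tau$ is a stopping time because $\{\widetilde\tau\le s\}=\{\mathsf t\le s\}\in\widetilde{\mathcal F}_s$ for $s\ge t_0$ and is empty for $s<t_0$.

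I expect the main obstacle to be exactly this preservation step, together with the surrounding measure-theoretic bookkeeping --- choosing the Polish space $\mathcal E$, invoking the kernels $(Q_x)_x$, and checking that the glued integrand $\widetilde H$ is genuinely $\widetilde{\mathbb F}$-predictable on $(t_0,T]$. None of this is conceptually hard, but it must be set up carefully; the entire probabilistic content is the independence of the post-$t_0$ Poisson increments from $\mathcal F_{t_0}$, which is what licenses the free re-randomization of the first piece.
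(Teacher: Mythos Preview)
Your proposal is correct and follows essentially the same approach as the paper: use Corollary~\ref{cor:integral_representation_simple_integrand} to produce the special integrand on $[0,t_0]$, then glue the post-$t_0$ data via a regular conditional distribution kernel on an extended product space. The paper's construction differs only in minor bookkeeping (it records the cumulative process $\int_{t_0}^s H_u\,du$ and recovers $h_s$ as its derivative, and it re-uses the original $\Omega$ and $M$ for the first piece rather than a fresh $\Omega_1$), and it omits the verification of property~1 that you sketch via Watanabe's characterization, simply asserting that the required properties hold ``by construction''.
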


\begin{proof}
    We explicitly construct the required processes and the extension of the probability space. By Corollary \ref{cor:integral_representation_simple_integrand}, applied to $\mu = \delta_0$, $\nu = \law(X_{t_0})$ and $\beta=e^{-t_0}$, there exist a deterministic function $\bar{H}$ and a random variable $\bar{U} \in L^0(\mathcal{F}_0)$ such that 
    $X_{t_0} \overset{d}{=} \int_0^{t_0} \bar{H}(s, \bar{U}) \mathbf{1}_{s \leq \tau_1} \, dM_s =: \xi$. Let $\hat{\mathbb{T}}$ denote the space of functions 
$(a, m)$ on $[t_0, T]$, where $a$ is increasing and absolutely continuous and $m$ is c\`adl\`ag. Define $\mathbb{T} := \hat{\mathbb{T}} \times [t_0, T]$. Let $\gamma$ be a probability kernel from $\R$ to $\mathbb{T}$, i.e., a measurable map $\R \to  \Pc(\mathbb{T})$, defined by
    \[
        \gamma(x; d(a,m,t)) := \law\left(\left(\int_{t_0}^s H_u \, du,\ M_s - M_{t_0}\right)_{s \in [t_0, T]},\ \tau \vee t_0\ \middle|\ X_{t_0} = x\right).
    \]
    On $\hat{\mathbb{T}}$ introduce the coordinate processes $a_s(a, m) := a_s$ and $m_s(a, m) := m_s$. Define the extended space as $\widetilde{\Omega} := \Omega \times \mathbb{T}$, with the filtration $\widetilde{\mathbb{F}} = (\widetilde{\F}_s)_{s \in [0,T]}$ given by
   $$
    \widetilde{\F}_s := 
    \begin{cases} 
        \mathcal{F}_s \otimes \{\emptyset, \hat{\mathbb{T}}\} \otimes \{\emptyset, [t_0, T]\}, & s \leq t_0,\\
        \mathcal{F}_{t_0} \otimes \sigma\left((a_u)_{t_0 \leq u \leq s}, (m_u)_{t_0 \leq u \leq s}\right) \otimes \sigma(\{ [t_0,u] : t_0 \le u \le s \}), & s > t_0.
    \end{cases}
    $$
    Set $\widetilde{\F} := \widetilde{\F}_T$, and define the probability measure $\widetilde{\mathbb{P}}$ on $(\widetilde{\Omega}, \widetilde{\F})$ by
    $$
    \widetilde{\mathbb{P}}(d(\omega, a, m, t)) := \mathbb{P}(d\omega) \otimes \gamma\left(\xi(\omega); d(a, m, t)\right).
    $$
     For $s \in (t_0,T]$ define $h_s := \lim \sup_{\Delta \downarrow 0} \frac{a(s) - a(s-\Delta)}{\Delta}$. Finally, define the processes $\widetilde{M}$ and $\widetilde{H}$ by
    \begin{equation}\label{eq:Extended_integrand}
        \widetilde{M}_s := 
        \begin{cases}
            M_s & s \leq t_0, \\ 
            M_{t_0} + m_s & s > t_0,
        \end{cases} \quad
        \widetilde{H}_s := 
        \begin{cases}
            \bar{H}(s, \bar{U}) & s \leq t_0, \\
            h_s, & s > t_0,
        \end{cases}
    \end{equation}
    and set
    $$ 
    \tilde\tau(\omega,a,m,t) = \begin{cases}
        \tilde \tau_1(\omega) \wedge t_0, & t = t_0  \\ 
        t, & t > t_0
        \end{cases}.
    $$
    By construction, the triple $(\widetilde{M}, \widetilde{H},\tilde \tau)$ satisfies the required properties.
\end{proof}

\subsection{Integral characterization of strong $\beta$-biased order}

\begin{lemma}\label{lem:strict_order_simple_integrand}
    Let $H \geq 0$ be a predictable process satisfying $\mathbb{E}\left[\int_0^{t_\beta} H_s \, ds \right] < \infty$, and let $\tau$ be a stopping time such that $\tau < t_\beta$ a.s. Suppose there exist $0 <\hat t_0 < t_\beta$,
    a random variable $\bar{U} \in L^0(\mathcal{F}_0)$, and a deterministic function $\bar{H}$ such that $H_s \mathbf{1}_{s \leq \tau} = \bar{H}(s, \bar{U}) \mathbf{1}_{s \leq \tau_1}$ for all $s \in [0, \hat t_0]$. Then, 
    $$
    \delta_0 \sbbc \law\left(\int_0^\tau H_s \, dM_s\right).
    $$
\end{lemma}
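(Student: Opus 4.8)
The plan is to split the time horizon at $\hat t_0$: the law of the stopped process at time $\hat t_0$ will turn out to be an atomic biased probability with parameter $\hat\beta:=e^{-\hat t_0}$, which is strictly larger than $\beta$, while the residual dynamics provide a biased coupling with the complementary parameter $\beta':=e^{-(t_\beta-\hat t_0)}$, again strictly larger than $\beta$, with $\hat\beta\beta'=\beta$ and $\log(1/\beta')=t_\beta-\hat t_0$. The two pieces are then glued by Theorem~\ref{thm:transitivity}, keeping careful track of strictness. First I would reduce to $\mu=\delta_0$ conditionally on $\bar U$: since $\E[\int_0^{t_\beta}H_s\,ds]<\infty$, the process $X_t:=\int_0^{t\wedge\tau}H_s\,dM_s$ is a uniformly integrable martingale on $[0,t_\beta]$, so $\E[X_\tau\mid\mathcal F_0]=0$ and hence $\E[X_\tau\mid\bar U]=0$; it therefore suffices to show that $\nu^u:=\law(X_\tau\mid\bar U=u)$ is strongly $\beta$-biased for $\law(\bar U)$-a.e.\ $u$, since a mixture of strongly $\beta$-biased probabilities is again strongly $\beta$-biased.

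Fix such a $u$. For the first block, on the event $\{\tau_1>\hat t_0\}$ of probability $\hat\beta$ the structural identity $H_s\mathbf{1}_{s\le\tau}=\bar H(s,\bar U)\mathbf{1}_{s\le\tau_1}$ together with $\bar H\ge0$ forces $X$ to move deterministically on $[0,\hat t_0]$ and to equal $m_u:=\int_0^{\hat t_0}\bar H(s,u)\,ds$ at time $\hat t_0$; moreover $X_{\hat t_0\wedge\tau}\le m_u$ on every path (the same elementary computation as in Step~1 of the proof of Theorem~\ref{thm:order_integral_Representation_Intro}). Hence $\nu_1^u:=\law(X_{\hat t_0\wedge\tau}\mid\bar U=u)$ is atomic $\hat\beta$-biased around $0$, with $S(\nu_1^u)=m_u$ and $\nu_1^u(\{m_u\})\ge\hat\beta$. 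For the residual block, conditionally on $\mathcal F_{\hat t_0}$ the increment $X_\tau-X_{\hat t_0\wedge\tau}$ is a Poisson-martingale integral of a non-negative integrand run over the stopping time $(\tau-\hat t_0)^+<t_\beta-\hat t_0=\log(1/\beta')$; applying Corollary~\ref{corollary: biased-peacock} to this conditional picture shows that $\law(X_\tau\mid X_{\hat t_0\wedge\tau}=z,\bar U=u)$ is $\beta'$-biased around $z$ for $\nu_1^u$-a.e.\ $z$. Consequently the disintegration coupling $\pi^u:=\law(X_{\hat t_0\wedge\tau},X_\tau\mid\bar U=u)$ lies in $\mathrm{Cpl}_{\beta'}(\nu_1^u,\nu^u)$, i.e.\ $\nu_1^u\prec_{\beta'}\nu^u$.

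If $\|(\nu_1^u)^R\|>\hat\beta$, then $\nu_1^u$ is not simple and hence strongly $\hat\beta$-biased by Lemma~\ref{lem:atomic_not_simple_implies_stronglybiased}, so $\delta_0\prec_{s\hat\beta}\nu_1^u$; combined with $\nu_1^u\prec_{\beta'}\nu^u$, the strict part of Theorem~\ref{thm:transitivity} yields $\delta_0\prec_{s\beta}\nu^u$. It remains to treat the case $\|(\nu_1^u)^R\|=\hat\beta$, i.e.\ $\nu_1^u=(\nu_1^u)^L+\hat\beta\delta_{m_u}$ is simple $\hat\beta$-biased; this is the main obstacle. The crucial extra input is that $\tau<t_\beta$ is a strict inequality: on $\{N_\tau=0\}$ we have $X_\tau=\int_0^\tau H_s\,ds\ge0$, and optional stopping applied to the martingale $t\mapsto e^t\mathbf{1}_{\{N_t=0\}}$ together with $\tau<t_\beta$ gives $\P(N_\tau=0\mid\bar U=u)>\beta$, whence $\|(\nu^u)^R\|>\beta$. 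One then argues as in Step~1 of the proof of Theorem~\ref{thm:transitivity}, with $\nu_1^u$ playing the role of the simple measure: after replacing $\pi^u$ by a $\beta'$-biased coupling of $\nu_1^u$ and $\nu^u$ whose fibre over $m_u$ is simple $\beta'$-biased around $m_u$ (this is where the work lies), either $S(\pi_{m_u})<S(\nu^u)$, in which case Step~1 of Theorem~\ref{thm:transitivity} exhibits $\nu^u$ as a mixture of strongly $\beta$-biased probabilities, or $S(\pi_{m_u})=S(\nu^u)$, in which case $\nu^u$ is atomic $\beta$-biased with an atom of mass at least $\hat\beta\beta'=\beta$ at $S(\nu^u)$, so that, together with $\|(\nu^u)^R\|>\beta$, Lemma~\ref{lem:atomic_not_simple_implies_stronglybiased} shows $\nu^u$ is strongly $\beta$-biased. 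Either way $\delta_0\prec_{s\beta}\nu^u$. I expect the construction of the intermediate coupling with a simple fibre over $m_u$ to be the delicate step.
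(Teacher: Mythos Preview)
Your overall architecture---split the horizon, identify the intermediate law as atomic $\hat\beta$-biased, then glue via Theorem~\ref{thm:transitivity}---matches the paper's, and your optional stopping observation that $\P(N_\tau=0\mid\bar U=u)>\beta$ (hence $\|(\nu^u)^R\|>\beta$) is correct and elegant. But the proof has a genuine gap precisely where you flag it: the existence of a coupling $\tilde\pi\in\mathrm{Cpl}_{\beta'}(\nu_1^u,\nu^u)$ whose fibre over $m_u$ is \emph{simple} $\beta'$-biased. You neither construct it nor argue why it should exist, and in general it need not: $\pi^u_{m_u}$ is only known to be $\beta'$-biased, and there is no reason a simple $\beta'$-biased measure $\sigma$ around $m_u$ with $\hat\beta\sigma\le\nu^u$ and $(\nu^u-\hat\beta\sigma)/(1-\hat\beta)$ still $\beta'$-biased around the remaining atom(s) of $\nu_1^u$ must exist. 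If instead you decompose $\pi^u_{m_u}$ into simple pieces (as in Step~2 of Theorem~\ref{thm:transitivity}), then in the ``atomic'' branch you would need $\|(\nu^{u,w})^R\|>\beta$ for each piece $\nu^{u,w}$, and your global inequality $\|(\nu^u)^R\|>\beta$ does not descend to the mixture components. So neither the single-coupling route nor the decomposition route closes as written.

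The paper avoids this obstacle by bringing in one more structural ingredient you are missing: Jacod's representation of stopping times before the first jump. By \cite[Lemma~3.2]{Jacod75} there is an $\mathcal F_0$-measurable $\eta_0$ with $\tau\wedge\tau_1=\eta_0\wedge\tau_1$ a.s.; after conditioning one may take $\eta_0$ constant, and $\tau<t_\beta$ forces $\eta_0<t_\beta$. The paper then splits at $t_0:=\hat t_0\wedge\eta_0$ and, in the simple case, inserts a \emph{third} time point $\eta_0$. The payoff is that on $\{\tau_1>\eta_0\}$ one has $\tau=\eta_0$ exactly, so the fibre of $\law(X_{t_\beta}\mid X_{\eta_0}=\cdot)$ over the top atom $x_{\eta_0}$ is $\delta_{x_{\eta_0}}$---automatically simple, with atom mass $1>\beta/e^{-\eta_0}$. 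This is what makes Lemma~\ref{lem:sufficient_conditions_strict_order}(2) applicable and replaces the coupling construction you left open. In short: the missing idea is not a clever coupling, but the use of Jacod's lemma to pin down $\tau$ on the no-jump event, which trivializes the fibre over the maximum.
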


\begin{proof}
    From Lemma 3.2 in \cite{Jacod75} it follows that there is $ \eta_0 \in L^0(\F_0)$ such that $\tau \wedge \tau_1 =  \eta_0 \wedge \tau_1$ a.s. Without loss of generality, we may assume that $\eta_0$ and $\bar U$ are constants. Indeed, if this is not the case, we condition on $\{ \eta_0 = \eta, \, \bar U = u\}$, and note that $\delta_0 \sbbc \law\left(\int_0^\tau H_s \, dM_s \big| \eta_0 = \eta, \; \bar U = u\right)$ implies $\delta_0 \sbbc \law\left(\int_0^\tau H_s \, dM_s\right)$. Since $\tau < t_\beta$ a.s., it follows that $\eta_0 < t_\beta$. We simplify the notation by writing $\bar{H}(s) := \bar{H}(s, \bar{U})$, since $\bar{U}$ is now assumed to be constant.

    Define $X_t := \int_0^t H_s \mathbf{1}_{s \leq \tau} \, dM_s$, $\nu_t := \law(X_t)$, and let $t_0 := \min(\hat{t}_0, \eta_0)$. 
    Since $\tau < t_\beta$ a.s., we have $\nu_{t_\beta} = \law\left(\int_0^{\tau} H_s \, dM_s\right)$. Since $X_{t_0} = \int_0^{t_0} \bar{H}(s) \mathbf{1}_{s \leq \tau_1} \, dM_s$ for a deterministic function $\bar{H}$, the distribution $\nu_{t_0}$ is atomic $e^{-t_0}$-biased.
    If $\nu_{t_0}$ is not simple, then $\tm{\nu_{t_0}} > e^{-t_0}$ and hence, by Lemma~\ref{lem:atomic_not_simple_implies_stronglybiased}, $\nu_{t_0}$ is strongly $e^{-t_0}$-biased. In this case, since $\nu_{t_0} \prec_{e^{-(t_\beta - t_0)}} \nu_{t_\beta}$, it follows from Theorem \ref{thm:transitivity} that $\delta_0 \sbbc \nu_{t_\beta}$. 
    Otherwise, if $\nu_{t_0}$ is simple $e^{-t_0}$-biased, define $\gamma_x := \law(X_{\eta_0} \mid X_{t_0} = x)$, $\pi_x := \law(X_{t_\beta} \mid X_{\eta_0} = x)$, $x_{t_0} := S(\nu_{t_0})$, and $x_{\eta_0} := S(\nu_{\eta_0})$. The following cases are possible:

    \emph{Case 1: $S(\gamma_{x_{t_0}}) < x_{\eta_0}$ (see Figure \ref{fig:case1}).}\\
    In this case, Lemma \ref{lem:sufficient_conditions_strict_order} (1) implies that $\delta_0 \prec_{s e^{-\eta_0}} \nu_{\eta_0}$, and by Theorem \ref{thm:transitivity} we conclude that $\delta_0 \sbbc \nu_{t_\beta}$.
    
    \emph{Case 2: $S(\gamma_{x_{t_0}}) = x_{\eta_0}$ (see Figure \ref{fig:case2}).}\\
    Here, $\nu_{\eta_0}$ is atomic $e^{-\eta_0}$-biased. If $\nu_{\eta_0}$ is not simple, i.e.\ $\tm{\nu_{\eta_0}} > e^{-\eta_0}$, then $\nu_{\eta_0}$ is strongly $e^{-\eta_0}$-biased, implying $\delta_0 \sbbc \nu_{t_\beta}$. Otherwise, since $X_{\eta_0} = x_{\eta_0}$ implies $\tau_1 \geq \eta_0$, and so $\tau = \eta_0$, then $\pi_{x_{\eta_0}} = \delta_{x_{\eta_0}}$, and Lemma \ref{lem:sufficient_conditions_strict_order} (2) again gives $\delta_0 \sbbc \nu_{t_\beta}$.
\end{proof}
\begin{figure}[ht]
    \centering
    \begin{subfigure}{0.45\textwidth}
        \includegraphics[width=\textwidth]{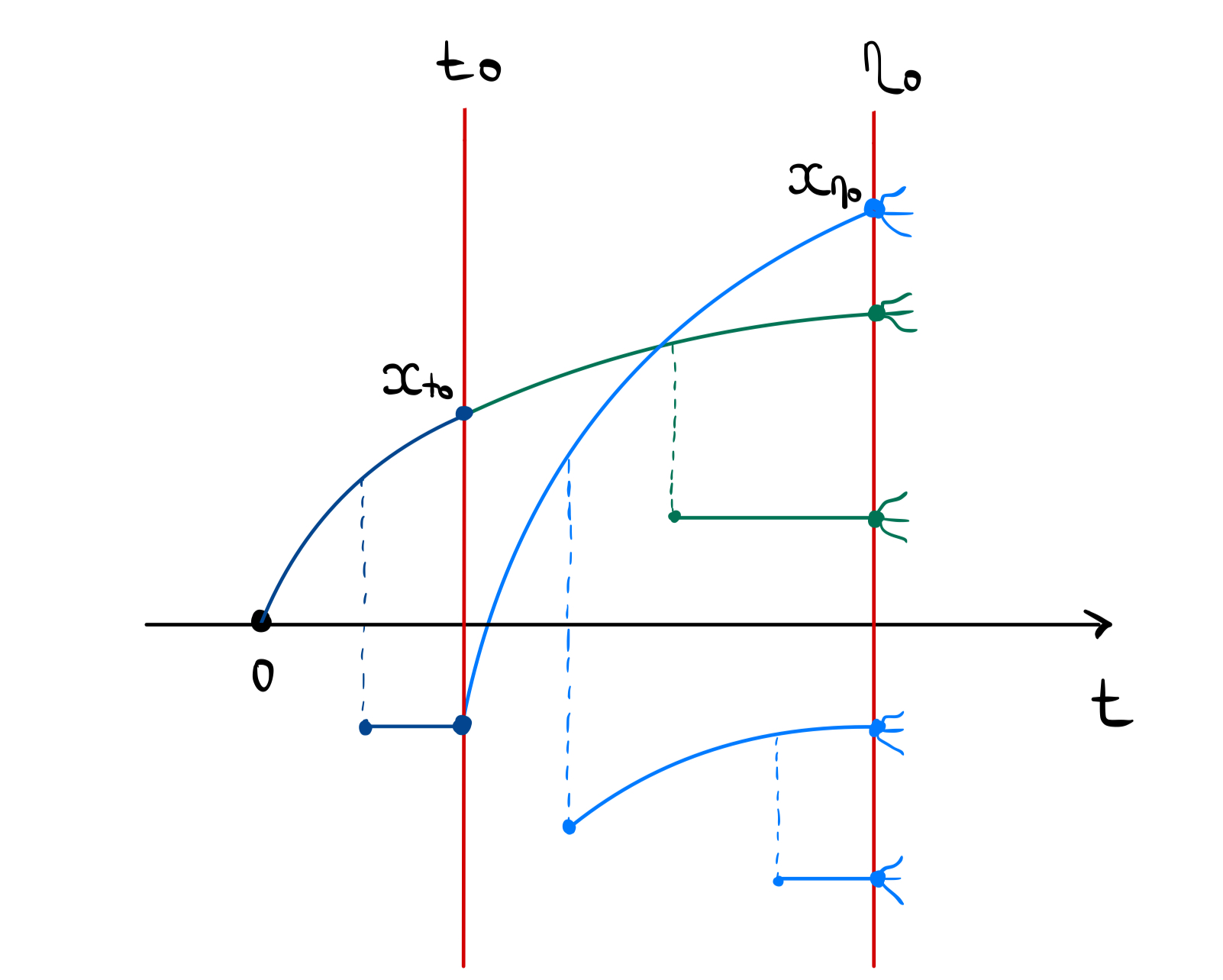}
        \caption{Case 1: $\max \supp(\gamma_{x_{t_0}}) < x_{\eta_0}$.}
        \label{fig:case1}
    \end{subfigure}
    \hfill
    \begin{subfigure}{0.45\textwidth}
        \includegraphics[width=\textwidth]{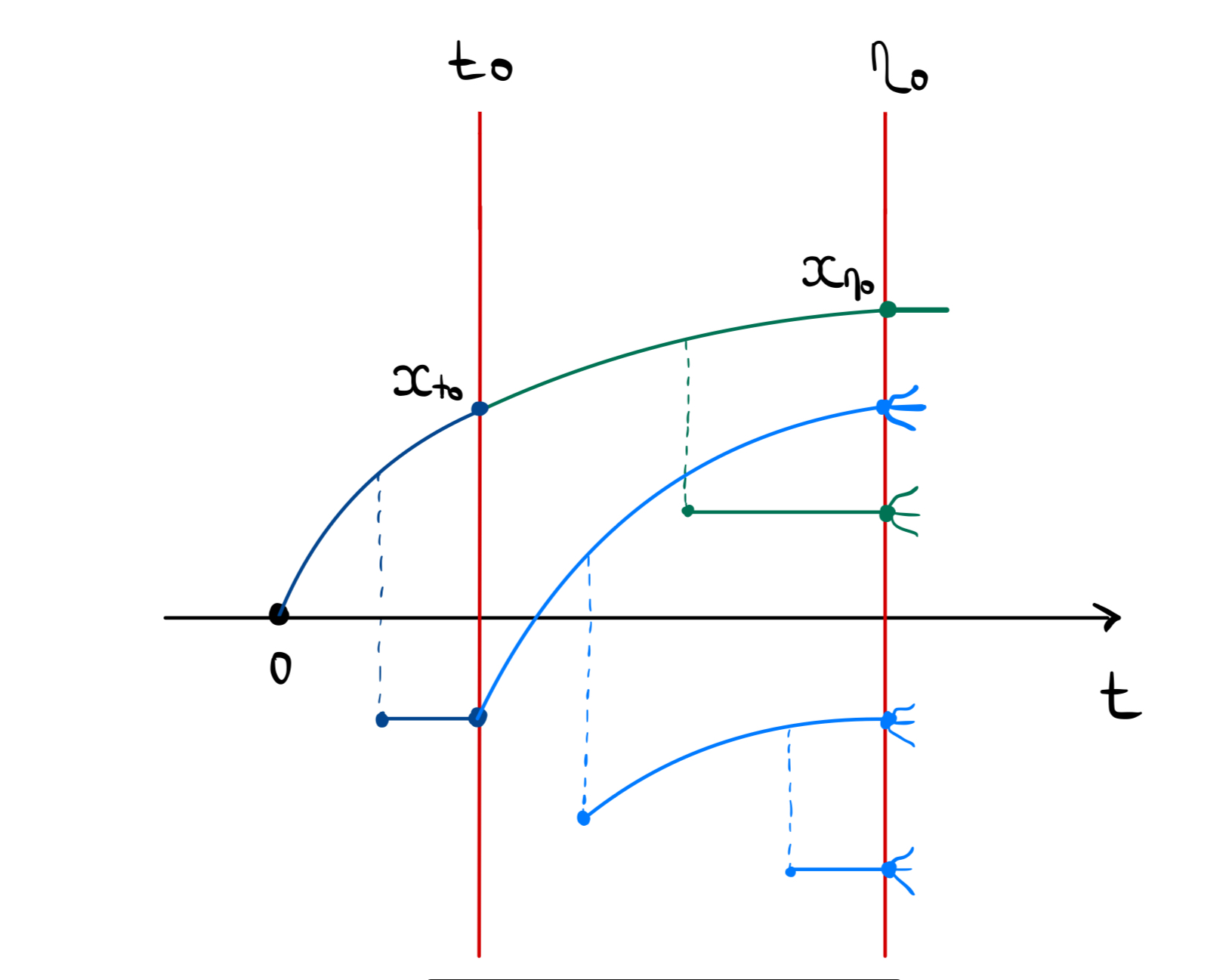}
        \caption{Case 2: $\max \supp(\gamma_{x_{t_0}}) = x_{\eta_0}$.}
        \label{fig:case2}
    \end{subfigure}
    \caption{The two possible configurations of $S(\gamma_{t_0})$ and $x_{\eta_0}$.
}
    \label{fig:cases}
\end{figure}

We are now in the position to prove our third main result (Theorem~\ref{thm:strict_order_integral_Representation_Intro}), which characterizes the strong $\beta$-biased order in terms of Poisson martingales.
    \begin{proof}[Proof of Theorem~\ref{thm:strict_order_integral_Representation_Intro}]
        First, we assume that $\mu \sbbc \nu$ and explicitly construct an integrand $H \geq 0$ and a stopping time $\tau$ such that \eqref{eq:strict_thm_order^Representation} is satisfied.
        Initially, we consider the case where $\mu = \delta_x$. Since $\delta_x \prec_{s\beta} \nu$, we can represent $\nu$ as a mixture $\nu = \int \nu_u \, \lambda(du)$, where a.e. $\nu_u$ is simple $\beta(u)$-biased around $x$ with  $\beta(u) := \nu_u([x, +\infty)) > \beta$. Since $u\mapsto\nu_u$ and $\nu_u \mapsto \beta(u)$ are measurable,  so is $u \mapsto \beta(u)$. By Corollary \ref{cor:integral_representation_simple_integrand}, there exists a deterministic function $\bar{H}(s,u)$ such that 
        $$
        x + \int_0^{t_{\beta(u)}} \bar{H}(s,u) \mathbf{1}_{s \leq \tau_1} \, dM_s \sim \nu_u,
        $$
        where $t_{\beta(u)} := -\log(\beta(u))$, and $\tau_1$ is the first jump time of the Poisson process $N$. Now, let $\hat{U} \in L^0(\F_0)$ be a random variable with $\hat{U} \sim \lambda$. Define $\tau := t_{\beta(\hat{U})} = -\log(\beta(\hat{U}))$ and set $H_s := \bar{H}(s,\hat{U}) \mathbf{1}_{s \leq \tau_1}$. Then, for any $z \in \mathbb{R}$,
        $$
        \P\left[x + \int_0^\tau H_s \, dM_s \leq z\right] 
        = \int \P\left[x + \int_0^\tau H_s \, dM_s \leq z \mid \hat{U} = u \right] \lambda(du) 
        $$
        $$
        = \int \P\left[x + \int_0^\tau \bar{H}(s, u) \, dM_s \leq z \right] \lambda(du) 
        = \nu((-\infty, z]),
        $$
        which implies that $x + \int_0^\tau H_s \, dM_s \sim \nu$.

        For the general case where $\mu \sbbc \nu$, consider a coupling $\pi \in \cplsbbc(\mu, \nu)$ and its $\mu$-disintegration $(\pi_x)_x$. 
         Since $\delta_x \prec_{s\beta} \pi_x$, we can measurably decompose $\pi_x$ into a mixture of simple $\beta(x,u)$-biased probabilities centered at $x$, i.e., $\pi_x = \int \pi_{x,u} \, \lambda(du)$, where each $\pi_{x,u}$ is simple $\beta(x,u)$-biased around $x$, with $\beta(x,u) := \pi_{x,u}([x, +\infty)) > \beta$, and $(x,u) \mapsto \pi_{x,u}$ is measurable. As above, we construct an integrand $H^x$ of the form $H_s^x = \hat H(s, \hat U, x)\mathbf{1}_{s \le \tau_1}$, where $\hat H$ is deterministic, and a stopping time $\tau_x < t_\beta$ of the form $\tau_x = -\log(\beta(x, \hat U))$ for some $\hat U \in L^0(\F_0)$, such that $x + \int_0^{\tau_x} H_s^x \, dM_s \sim \pi_x$ for $\mu$-a.e. $x$. By Remark~\ref{rem:meausurability_of_decomposition}, the map $x \mapsto H^x$ can be taken to be measurable. Next, take $X_0 \sim \mu$, independent of $\hat U$, and set $H_s := \hat H(s, \hat U, X_0)\mathbf{1}_{s \le \tau_1}$ and $\tau := \tau_{X_0} = -\log(\beta(X_0, \hat U))$. Then, by construction, we have $ X_0 + \int_0^\tau H_s \, dM_s \sim \nu. $

        For the reverse direction, assume that there exist a stopping time $\tau < t_\beta$ a.s. and a martingale $X$ such that $X_0 \sim \mu$ and $X_\tau \sim \nu$, and $X_\tau = X_0 + \int_0^\tau H_s \, dM_s$ for some predictable process $H \geq 0$ with $\E\left[\int_0^{t_\beta} H_s \, ds \right] < \infty$. Our goal is to show that this implies $\mu \sbbc \nu$. 
First, we consider the case where $X_0 = 0$, so our goal is to show that $\delta_0 \prec_{s\beta} \nu$. Denote $X_t := \int_0^t H_s \mathbf{1}_{s\leq \tau} , dM_s$, so that $X_{t_\beta} = X_\tau$.  By Lemma 3.2 in \cite{Jacod75}, there exists $\eta_0 \in L^0(\F_0)$ such that $\tau \wedge \tau_1 = \eta_0 \wedge \tau_1$ a.s. Without loss of generality, we may assume that $\eta_0$ is a constant. If this is not the case, we condition on ${\eta_0 = \eta}$ and note that $\delta_0 \sbbc \law\left(\int_0^\tau H_s , dM_s \mid \eta_0 = \eta\right)$ implies $\delta_0 \sbbc \nu$. Since $\tau < t_\beta$ a.s., it follows that $\eta_0 < t_\beta$.
        Now, fix $0<t_0 \leq \eta_0$. By Proposition \ref{prop:extension_of_integrand}, there exists an extension $(\widetilde{\Omega}, \widetilde{\F}, \widetilde{\P})$ of the probability space, equipped with a filtration $\widetilde{\mathbb{F}}=(\widetilde{\F}_s)_{s \in [0, T]}$, on which we can define processes $(\widetilde{H}_s)_{s \in [0, T]}$ and $(\widetilde{M}_s)_{s \in [0, T]}$, and a stopping time $\widetilde{\tau}$, such that:
        \begin{enumerate}
        \item $\widetilde{M}$ is a $(\widetilde{\P},\widetilde{\F})$-negative compensated Poisson process,
        \item $\widetilde{H} \geq 0$ is $\widetilde{\mathbb{F}}$-predictable 
        and satisfies $\mathbb{E}_{\widetilde{\P}}\left[\int_0^T \widetilde{H}_s \, ds\right] < \infty$,
        \item For $s \in [0, t_0]$, it holds that $\widetilde{H}_s \mathbf{1}_{s \leq \widetilde{\tau}} = \bar{H}(s, \bar{U}) \mathbf{1}_{s \leq \widetilde{\tau}_1}$ for some deterministic function $\bar{H}$, $\bar{U} \in L^0(\mathcal{F}_0)$, and where $\widetilde{\tau}_1$ is the first jump time of $\widetilde{M}$,
        \item For $Y_t := \int_0^t \widetilde{H}_s \mathbf{1}_{s \leq \widetilde{\tau}}\, d\widetilde{M}_s$, it holds that
    $Y_{t_0} \overset{d}{=} X_{t_0}$, and for $\law(X_{t_0})$-a.e. $x$,
    \begin{align*}
    \law_{\, \P}\!\big((H_s, M_s - M_{t_0})_{s \in [t_0,T]},\; \tau \vee t_0 \mid X_{t_0}=x\big) \\
        = \law_{\, \widetilde{\P}}\!\big((\widetilde{H}_s, \widetilde{M}_s - \widetilde{M}_{t_0})_{s \in [t_0,T]},\; 
           \widetilde{\tau} \vee t_0 \mid Y_{t_0}=x\big).
        \end{align*}
    \end{enumerate}
        Then, by construction, we have $\law_{\P}(X_\tau) = \law_{\widetilde \P}(Y_{\widetilde{\tau}})$. Applying Lemma~\ref{lem:strict_order_simple_integrand} to $(\widetilde H, \widetilde M, \widetilde \tau)$, we conclude that $\delta_0 \sbbc \law_{\widetilde \P}(Y_{\widetilde{\tau}})$, and therefore $\delta_0 \sbbc \law_{\P}(X_{\tau})$.

        In the case of a generic initial distribution $X_0 \sim \mu$, we condition on $X_0 = x$ and, as shown in the previous step, we derive $\delta_x \sbbc \law\left(x + \int_0^\tau H_s , dM_s \mid X_0 = x\right) =: \hat{\pi}_x$. Therefore, using $\law(X_\tau) = \int \hat{\pi}_x \, \mu(dx)$, we conclude that $\mu \sbbc \nu$.
\end{proof}

\noindent
{\bf Acknowledgment:}
 This research was funded in part by the Austrian Science Fund (FWF) [doi: 10.55776/P34743 and 10.55776/P35197]. For open access purposes, the author has applied a CC BY public copyright license to any author accepted manuscript version arising from this submission.

\bibliography{joint_biblio}

\end{document}